\renewcommand{\baselinestretch}{1.1}
\DeclareMathOperator{\catq}{\!/\!\!/\!}
\DeclareFontFamily{OT1}{wncyr}{\hyphenchar\font45}
\DeclareFontShape{OT1}{wncyr}{m}{n}{%
   <5> <6> <7> <8> <9> gen * wncyr
   <10> <10.95> <12> <14.4> <17.28> <20.74>  <24.88>wncyr10}{}
\DeclareFontShape{OT1}{wncyr}{m}{it}{%
   <5> <6> <7> <8> <9> gen * wncyi
   <10> <10.95> <12> <14.4> <17.28> <20.74> <24.88> wncyi10}{}
\DeclareFontShape{OT1}{wncyr}{m}{sc}{%
   <5> <6> <7> <8> <9> <10> <10.95> <12> <14.4>
   <17.28> <20.74> <24.88>wncysc10}{}
\DeclareFontShape{OT1}{wncyr}{b}{n}{%
   <5> <6> <7> <8> <9> gen * wncyb
   <10> <10.95> <12> <14.4> <17.28> <20.74> <24.88>wncyb10}{}
\def\rus{\usefont{OT1}{wncyr}{m}{n}\cyracc\fontsize{9}{11pt}\selectfont}
\def\rusit{\usefont{OT1}{wncyr}{m}{it}\cyracc\fontsize{9}{11pt}\selectfont}
\def\russc{\usefont{OT1}{wncyr}{m}{sc}\cyracc\fontsize{9}{11pt}\selectfont}
\gdef\n@te#1#2{\leavevmode\vadjust{%
 {\setbox\z@\hbox to\z@{\strut#1}%
  \setbox\z@\hbox{\raise\dp\strutbox\box\z@}\ht\z@=\z@\dp\z@=\z@%
  #2\box\z@}}}
\gdef\leftnote#1{\n@te{\hss#1\quad}{}}
\gdef\rightnote#1{\n@te{\quad\kern-\leftskip#1\hss}{\moveright\hsize}}
\gdef\?{\FN@\qumark}
\gdef\qumark{\ifx\next"\DN@"##1"{\leftnote{\rm##1}}\else
 \DN@{\leftnote{\rm??}}\fi{\rm??}\next@}}
\newtheorem{theorem}{Theorem}[section]
\newtheorem{thm}[theorem]{Theorem}
\newtheorem{prop}[theorem]{Proposition}
\newtheorem{lem}[theorem]{Lemma}
\newtheorem{cor}[theorem]{Corollary}
\newtheorem{claim}[theorem]{Claim}
\newtheorem{questions}[theorem]{Questions}
\theoremstyle{definition}
\newtheorem{defn}[theorem]{Definition}
\newtheorem{remark}[theorem]{Remark}
\newtheorem{example}[theorem]{Example}
\numberwithin{equation}{section}
\date{March 4, 2010}
\begin{document}

\newcommand{\Symp}{\mbox{\boldmath$\rm Sp$}}
\newcommand{\g}{\mathfrak{g}}
\newcommand{\el}{\mathfrak{l}}
\newcommand{\lt}{\mathfrak{t}}
\newcommand{\ls}{\mathfrak{s}}
\newcommand{\lc}{\mathfrak{c}}
\newcommand{\lu}{\mathfrak{u}}
\newcommand{\lr}{\mathfrak{r}}
\newcommand{\pr}{\operatorname{pr}}
\newcommand{\Hom}{\operatorname{Hom}}
\newcommand{\Rad}{\operatorname{Rad}}
\newcommand{\sign}{\operatorname{sign}}

\newcommand{\ve}{{\varepsilon}}
\newcommand{\vp}{{\varpi}}

\newcommand{\kbar}{\overline k}

\newcommand{\sdp}{\mathbin{{>}\!{\triangleleft}}} 
\newcommand{\Alt}{\operatorname{A}}   
\newcommand{\GL}{\operatorname{GL}}
\newcommand{\PGL}{\operatorname{PGL}}
\newcommand{\SL}{\operatorname{SL}}
\newcommand{\SO}{\operatorname{SO}}
\newcommand{\Ad}{\operatorname{Ad}}
\newcommand{\ad}{\operatorname{ad}}

\newcommand{\rank}{\operatorname{rank}}
\newcommand{\Aut}{\operatorname{Aut}}
\newcommand{\Char}{\operatorname{\rm char\,}} 
\newcommand{\Gal}{\operatorname{Gal}}
\newcommand{\galois}{\Gal}
\newcommand{\rto}{\dasharrow}
\newcommand{\M}{\operatorname{M}}        
\newcommand{\ord}{\mathop{\rm ord}\nolimits}
\newcommand{\Sym}{{\operatorname{S}}}    
\newcommand{\tr}{\operatorname{\rm tr}}
\newcommand{\trace}{\tr}

\newcommand{\Res}{\operatorname{Res}}
\newcommand{\Sha}{\mbox{\rus{\fontsize{11}{11pt}\selectfont{SH}}}}
\newcommand{\G}{\mathcal{G}}
\renewcommand{\H}{\mathcal{H}}
\newcommand{\gen}[1]{\langle{#1}\rangle}
\renewcommand{\O}{\mathcal{O}}
\newcommand{\C}{\mathcal{C}}
\newcommand{\Ind}{\operatorname{Ind}}
\newcommand{\End}{\operatorname{End}}
\newcommand{\Spin}{\mbox{\boldmath$\rm Spin$}}
\newcommand{\T}{\mathbf G}
\newcommand{\GT}{\mbox{\boldmath$\rm T$}}
\newcommand{\Inf}{\operatorname{Inf}}
\newcommand{\Tor}{\operatorname{Tor}}
\newcommand{\m}{\mbox{\boldmath$\mu$}}

\newcommand{\A}{{\sf A}}
\newcommand{\D}{{\sf D}}
\newcommand{\Lbd}{{\sf \Lambda}}

\newcommand{\Id}{\operatorname{Id}}
\newcommand{\id}{\operatorname{id}}
\newcommand{\Nrd}{\operatorname{Nrd}}
\newcommand{\Trd}{\operatorname{Trd}}

\newcommand{\bbA}{{\mathbb A}}
\newcommand{\bbG}{{\mathbb G}}
\newcommand{\bbC}{{\mathbb C}}
\newcommand{\bbZ}{{\mathbb Z}}
\newcommand{\bbP}{{\mathbb P}}
\newcommand{\bbQ}{{\mathbb Q}}
\newcommand{\bbF}{{\mathbb F}}
\newcommand{\bbR}{{\mathbb R}}

\newcommand{\ssetminus}{\! \setminus \!}

\newcommand{\Ker}{\operatorname{Ker}}
\newcommand{\Spec}{\operatorname{Spec}}
\newcommand{\PGLn}{{\operatorname{PGL}_n}}
\newcommand{\PGLp}{{\operatorname{PGL}_p}}
\newcommand{\Sympl}{{\operatorname{Sp}}}
\newcommand{\Stab}{\operatorname{Stab}}
\newcommand{\Span}{\operatorname{Span}}
\newcommand{\diag}{\operatorname{diag}}
\newcommand{\Galois}{\Gal}
\newcommand{\Mat}{{\operatorname{M}}}
\newcommand{\Mn}{\Mat_n}
\newcommand{\Ima}{\operatorname{Im}}
\newcommand{\Int}{\operatorname{Int}}
\newcommand{\trdeg}{\operatorname{trdeg}}
\newcommand{\Tr}{\operatorname{Tr}}
\newcommand{\N}{\operatorname{N}}
\newcommand{\sln}{\operatorname{sl}_n}
\newcommand{\cal}{\mathcal}
\newcommand{\Lie}{\operatorname{Lie}}
\newcommand{\Ass}{\operatorname{Ass}}

\newcommand{\Br}{\operatorname{Br}}
\newcommand{\Pic}{\operatorname{Pic}}
\newcommand{\Div}{\operatorname{Div}}
\newcommand{\Brnr}{\operatorname{Br}_{\text{\rm{nr}}}}
\newcommand{\rk}{\operatorname{rk}}
\newcommand{\Isom}{\operatorname{Isom}}

\newcommand{\alp}{\alpha}
\newcommand{\eps}{\varepsilon}

\newcommand{\VG}{V{}_{{}^{\overset{}

\newcommand{\riso}{\hskip1mm {\buildrel \cong \over \rightarrow} \hskip1mm}
\newcommand{\liso}{\hskip1mm {\buildrel \cong \over \leftarrow} \hskip1mm}

{\centerdot}}}
  \hskip .5mm G\hskip
-3.9mm^{\overset{\centerdot}{}} \hskip
-1.37mm{}^{\underset{\centerdot}{}}\hskip 3.5mm{} }

\newcommand{\XH}{X{}_{{}^{\overset{}
{\centerdot}}}
  \hskip .5mm H\hskip
-3.9mm^{\overset{\centerdot}{}} \hskip
-1.58mm{}^{\underset{\centerdot}{}}\hskip 3.5mm{} }


\def\oi{\hskip1mm {\buildrel \cong \over \rightarrow} \hskip1mm}
\def\oii{\overset{
\cong}\to}


\title[Purity over the invariants of the adjoint action]{Is
the function field of a reductive Lie algebra\\
purely transcendental over\\ the field of  invariants for the
adjoint action?}

\author{Jean-Louis Colliot-Th\'el\`ene}
\address{C.N.R.S.,
UMR 8628, Math\'ematiques, B\^atiment 425, Universit\'e Paris-Sud,
F-91405 Orsay, France} \email{jlct@math.u-psud.fr}

\author{Boris Kunyavski\u\i }
\address{Department of Mathematics,
Bar-Ilan University, 52900 Ramat Gan, Israel}
\email{kunyav@macs.biu.ac.il}

\author{\break Vladimir L. Popov}
\address{Steklov Mathematical Institute,
Russian Academy of Sciences, Gubkina 8, Moscow 119991, Russia}
\email{popovvl@orc.ru}

\author{Zinovy Reichstein}
\address{Department of Mathematics, University of British Columbia,
       Vancouver, BC V6T 1Z2, Canada}
\email{reichst@math.ubc.ca}

\dedicatory{\rus Valentinu Evgenp1evichu Voskresenskomu, kollege i
uchitelyu,\\ s uvazheniem i blagodarnostp1yu}

\renewcommand{\baselinestretch}{1.0}

\begin{abstract} Let $k$ be a field of characteristic zero,
let $G$ be a    connected reductive algebraic group over $k$ and let
$\g$ be its Lie algebra. Let $k(G)$, respectively,\;$k(\g)$, be the
field of $k$-rational functions on $G$, respectively,\;$\g$. The
conjugation action of $G$ on itself induces the adjoint action of
$G$ on $\g$. We investigate the question   whether or not the field
extensions $k(G)/k(G)^G$ and $k(\g)/k(\g)^G$ are purely
transcendental. We show that the answer is the same for
$k(G)/k(G)^G$ and $k(\g)/k(\g)^G$, and reduce the problem to the
case where $G$ is simple. For simple   groups we show that the
answer is positive if $G$ is split of type ${\sf A}_{n}$ or ${\sf
C}_n$, and negative for groups of other types, except possibly ${\sf
G}_{2}$. A key ingredient in the proof  of the negative result is a
recent formula for the unramified Brauer group of a homogeneous
space with connected stabilizers.

As a byproduct of our investigation we give an affirmative answer to
a question of Grothendieck about the existence of a rational section
of the categorical quotient morphism for the conjugating action of
$G$ on itself.

The results and methods of this paper have played an
important part in recent A. Premet's negative solution ({\tt arxiv:0907.2500})
of the Gelfand--Kirillov conjecture for finite-dimensional simple Lie
algebras of every type, other than ${\sf A}_n$, ${\sf C}_n$, and
${\sf G}_2$.
\end{abstract}

\keywords{Algebraic group, simple Lie algebra, rationality problem,
integral representation, algebraic torus, unramified Brauer group}

\subjclass[2000]{14E08, 17B45, 14L30, 20C10, 14F22}

\maketitle

\begin{quote}{\fontsize{8pt}{3.3mm}
\selectfont\tableofcontents}
\end{quote}


\section*{Introduction}
\label{sect1}

A field extension $E/F$ is called {\em pure} (or {\em purely
transcendental} or {\em rational}\,) if $E$ is generated over $F$ by
a finite collection of algebraically independent elements. A field
extension $E/F$ is called {\em stably pure} (or {\em stably
rational}\,) if $E$ is contained in a field $L$ which is pure over
both $F$ and $E$. Finally, we shall say that $E/F$ is {\em
unirational} if $E$ is contained in a field $L$ which is pure over
$F$. In summary,
\[ \xymatrix@R=4mm{
L   \ar@{-}[d]_{\rm pure} \\
E   \ar@{-}[d]_{\text{stably pure}} \\
F \ar@{-}@/_1.5pc/[uu]_{\rm pure}} \quad \xymatrix@R=4mm{
L   \ar@{-}[d] \\
E   \ar@{-}[d]_{\rm unirational} \\
\, \, F \ar@{-}@/_1.5pc/[uu]_{\rm pure} \, .} \]

Let $k$ be a field. Unless otherwise mentioned, we assume that ${\rm
char}(k)=0$. This is in particular a standing assumption in this
section.

Let $G$ be a  connected reductive algebraic group  over $k$. Let $V$
be a finite dimensional $k$-vector space and let $G \hookrightarrow
\GL(V)$ be an algebraic group embedding over $k$. Let $k(V)$ denote
the field of $k$-rational functions on $V$ and $k(V)^G$ the subfield
of $G$-invariants in $k(V)$. It is natural to ask whether
$k(V)/k(V)^G$ is pure (or stably pure).

This question may be viewed as a birational counterpart of the
classical problem of freeness of the module of (regular) covariants,
i.e., the $k[V]^G$-module $k[V]$; cf.~\cite[Sects.\;3 and
8]{popov-vinberg}. (Here $k[V]$ is the algebra of $k$-regular
functions on $V$ and $k[V]^G$ is the subalgebra of its $G$-invariant
elements.) The question of rationality of $k(V)$ over $k(V)^G$ also
comes up in connection with counterexamples to the Gelfand--Kirillov
conjecture; see~\cite{AOVdB} and the paragraph of this introduction
right after the statement of Theorem \ref{thm1.1}.

Recall that a connected reductive group $G$ is called {\it split} if
there exists a Borel subgroup $B$ of $G$ defined over $k$ and a
maximal torus in $B$ is split.

If $G$ is split and the $G$-action on $V$ is {\it generically free},
i.e., the $G$-stabilizers  of the points of a dense open set of $V$
are trivial, then the following conditions are equivalent:
\begin{enumerate}
\item[(i)] the extension $k(V)/k(V)^G$ is pure;
\item[(ii)] the extension $k(V)/k(V)^G$ is
unirational; \item[(iii)] the group $G$ is a ``special
group''.\end{enumerate} Over an algebraically closed field, special
groups were defined by Serre~\cite{serre} and classified by him and
Grothendieck~\cite{grothendieck} in the 1950s; cf.~\S\ref{sect2.3}.
The equivalence of these conditions follows from
Corollary~\ref{cor.versal-c} below; see also
Lemma~\ref{pointunirat}.

The purity problem for $k(V)/k(V)^G$ is thus primarily of interest
in the case where the $G$-action on $V$ is faithful but not
generically free. For $k$ algebraically closed, such  actions have
been extensively studied and even classified, under the assumption
that either the group $G$ or the $G$-module $V$ is simple; for
details and further references, see
\cite[Sect.\;7.3]{popov-vinberg}.

For these $G$-modules, purity  for $k(V)/k(V)^G$ is known in some
special cases. For instance, one can show that this is the case if
$k[V]^G$ is generated by a quadratic form.  In \cite[Appendix
A]{AOVdB} one can find a sketch of a proof that $k(V)/k(V)^G$ is
pure if $G = \SL_2$ and $\dim(V) = 4$ or $5$.

Theorem~\ref{thm1}(b) below (with $G$ adjoint) gives the first known
examples of  a connected linear algebraic group $G$ over an
algebraically closed field $k$ with  a  faithful but not generically
free $G$-module $V$ such that $k(V)$ is not pure (and not even
stably pure) over $k(V)^G$.

Let   $\g$ be the Lie algebra of $G$.  The homomorphism $\Int\colon
G \to \Aut(G)$ sending  $g \in G$ to    the map $\Int(g) \colon G
\to G$, $ x \mapsto gxg^{-1}$, determines the {\it conjugation
action} of $G$ on itself,  $G \times G \to G,$ sending $(g,x)$ to
$\Int(g)(x)$.  The differential of $\Int(g)$ at the identity is the
linear map $\Ad(g) \colon\g \to\g$. This defines an action of $G$ on
$\g$, called the {\it adjoint action}. As usual, we will de\-no\-te
the fields of $k$-rational functions on $G$, respectively,\;$\g$, by
$k(G)$, respectively,\;$k(\g)$, and the fields of invariant
$k$-rational functions for the conjugation action,
respectively,\;the adjoint action, by $k(G)^G$,
respectively,\;$k(\g)^G$.

The purpose of this paper is to address the following purity
questions.

\begin{questions} \label{q1} Let $G$ be a  connected reductive
group over $k$ and let $\g$ be its Lie algebra.

\begin{enumerate}

\item[\rm(a)] Is the field extension $k(\g)/k(\g)^G$
pure{\rm?} stably pure{\rm?}

\item[\rm(b)] Is the field extension $k(G)/k(G)^G$
pure{\rm?} stably pure{\rm?}
\end{enumerate}
\end{questions}

It is worth mentioning here two other natural purity questions
arising in this situation, namely, that of the purity of  $k(\g)^G$
and $k(G)^G$ over $k$. They are, however, not directly related to
the questions we are asking and
 both
 have affirmative answers:  for $k(\g)^G$  this is proved
 in
\cite{kostant} and, for $k(G)^G\!,$ in  \cite{steinberg1} (for
simply connected semisimple $G$)
 and
\cite{popov5} (for the general case).

The main case of interest for us is that of {\rm split} groups, but
some of  our results hold for arbitrary reductive groups.

We shall give a nearly complete answer to Questions~\ref{q1} for
split groups, in particular, when $k$ is algebraically closed. Our
results can be summarized as follows.

\vskip 2mm

(i) (Corollary~\ref{cor.steinberg2}) {\it Let $G$ be a connected
split  reductive group over $k$. Then the field extensions
$k(\g)/k(\g)^G$ and $k(G)/k(G)^G$ are unirational.}

\vskip 2mm

This is closely related to Theorem~\ref{thm.steinberg0} below.

\vskip 2mm (ii) (Theorem~\ref{thm.reduction}) {\it For a given
connected reductive group $G$ over $k$, the answers to Questi\-ons
{\rm\ref{q1}(a)} and {\rm(b)} are the same.}

\vskip 2mm (iii) (Proposition~\ref{prop.reductive}) {\it For a
connected reductive  group $G$ over $k$ and a central $k$-subgroup
$Z$ of $G$,
  the answers to
  Questions~{\rm\ref{q1}}
  for $G/Z$ are the same as for $G$.}

\vskip 2mm

Taking $Z$ to be the radical of $G$, we thus reduce
Questions~\ref{q1} to the case where $G$ is semisimple. We shall
further reduce them to the case where $G$ is simple as follows.
Recall that a semisimple group $G$ is called {\it simple} if its Lie
algebra is a simple Lie algebra. Its centre is then finite but need
not be trivial. In the literature such a group is sometimes referred
to as an {\it almost simple} group.

\vskip 2mm (iv) (Proposition~\ref{prop.semisimple}) {\it Suppose
that $G$ is  connected, semisimple, and split. Denote the simple
components of the simply connected cover of   $\;G$ by $G_1, \dots,
G_n$. Let $\g_{i}$ denote the Lie algebra of $\,G_{i}$. Then the
following properties are equivalent:
\begin{enumerate}
\item[\rm(a)]
$k(\g)/k(\g)^G$ is stably pure;
\item[\rm(b)] $k(\g_i)/k(\g_i)^{G_i}$ is stably pure for every $i
= 1, \dots, n$.
\end{enumerate}
Similarly, the following properties are equivalent:
\begin{enumerate}
\item[\rm(a)]
$k(G)/k(G)^G$ is stably pure;
 \item[\rm(b)]
 $k(G_{i})/k(G_{i})^{G_{i}}$
is stably pure for every $i = 1, \dots, n$.
\end{enumerate}
}

\vskip 1.5mm If we replace ``stably pure" by ``pure", we can still
show that the field extension $k(\g)/k(\g)^G$ (respectively,
$k(G)/k(G)^G$) is pure if each $k(\g_{i})/k(\g_{i})^{G_{i}}$
(respectively, each $k(G_{i})/k(G_{i})^{G_{i}}$) is pure, but we do
not know whether or not the converse holds.

Finally, in the case where $G$ is simple we  prove the following
theorem.

\begin{thm} \label{thm1}
Let $G$ be a connected, simple algebraic group  over $k$
 and  let $\g$ be its Lie algebra. Then the field
extensions $k(G)/k(G)^G$ and $k(\g)/k(\g)^G$ are

\begin{enumerate}
\item[\rm(a)] pure, if $G$ is split of type ${\sf
A}_n$    or ${\sf C}_n$;

\item[\rm(b)] not stably pure if  $G$ is not of type
${\sf A}_n$, ${\sf C}_n$, or
  ${\sf G}_{2}$.
  \end{enumerate}
\end{thm}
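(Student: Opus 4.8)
The plan is to use the two reduction results to move to the most convenient setting and then to analyze a single generic orbit. By Theorem~\ref{thm.reduction} it suffices to treat the Lie algebra extension $k(\g)/k(\g)^G$, and by Proposition~\ref{prop.reductive} I am free to replace $G$ by any group isogenous to it through a central subgroup; in type ${\sf A}_n$ I will in fact compute with $\GL_{n+1}$ acting on $\mathfrak{gl}_{n+1}$ and descend to the simple forms afterwards. The organizing observation is that the adjoint quotient $\g \to \g /\!/ G = \Spec F$ has base rational over $k$ (by \cite{kostant}), while over the generic point the fibre is a single regular semisimple orbit, hence a homogeneous space $G/T$ in which $T$ is the centraliser of a regular semisimple element, i.e.\ a maximal torus of $G_F$ whose character lattice $\widehat T$ carries the action of the full Weyl group $W$ through $\Gal(\overline F/F)$. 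Thus in every case the problem reduces to deciding whether $k(\g)=F(G/T)$ is (stably) rational over $F$, and the answer is governed by the $W$-lattice $\widehat T$.

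For part (a) I would produce an explicit rational trivialisation. In type ${\sf A}$ a generic $X\in\mathfrak{gl}_{n+1}$ has $e_1$ as a cyclic vector, so $g(X):=(e_1\mid Xe_1\mid\dots\mid X^{n}e_1)$ is a matrix, rational in $X$, satisfying $g(X)e_1=e_1$ and $g(X)^{-1}Xg(X)=C_{f(X)}$, the companion matrix of the characteristic polynomial $f(X)$. The maps $X\mapsto(g(X),f(X))$ and $(g,c)\mapsto gC_cg^{-1}$, with $g$ ranging over the affine slice $\{g:ge_1=e_1\}\cong\mathbb{A}^{(n+1)^2-(n+1)}$ and $c$ over $\g/\!/G$, are mutually inverse rational maps; this yields a birational isomorphism $\mathfrak{gl}_{n+1}\cong\mathbb{A}^{(n+1)^2-(n+1)}\times(\g/\!/G)$ over $F$ and exhibits $k(\g)$ as purely transcendental over $F$. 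In type ${\sf C}$ the same scheme applies once the companion slice is replaced by a symplectic normal form: the relevant $W$-lattice (the hyperoctahedral action on the sign-permutation lattice) is a quasi-permutation lattice, so $T$ and hence $G/T$ is $F$-rational, and speciality of $\Sympl_{2n}$ furnishes the rational section that trivialises the generic orbit. The simple forms then follow via Proposition~\ref{prop.reductive} and Theorem~\ref{thm.reduction}.

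For part (b) I would argue by contradiction: stable purity of $k(\g)/F$ forces stable rationality of $G/T$ over $F$, hence the triviality (modulo $\Br F$) of its unramified Brauer group $\Brnr$. Since a stably rational extension stays so after enlarging the base field, I may assume $k$ algebraically closed. The stabiliser $T$ is connected, being a torus, so the recent formula for the unramified Brauer group of a homogeneous space with connected stabiliser applies and rewrites $\Brnr\bigl(F(G/T)/F\bigr)/\Br F$ as a $\Sha$-type group-cohomological invariant built from the $W$-module $\widehat T$ (a subquotient of $H^2(W,\widehat T)$). It then remains to prove that this invariant is non-zero.

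The \emph{main obstacle} is precisely this last, essentially combinatorial, step: evaluating the $W$-lattice invariant type by type. One must check that for ${\sf B}_n$, ${\sf D}_n$, ${\sf E}_6$, ${\sf E}_7$, ${\sf E}_8$ and ${\sf F}_4$ the weight lattice fails to be quasi-permutation in the sense detected by the formula, so that $\Brnr$ strictly exceeds $\Br F$ and stable purity is impossible, whereas for ${\sf A}_n$ and ${\sf C}_n$ (permutation and sign-permutation lattices) the invariant vanishes, in harmony with (a). The special difficulty of ${\sf G}_2$, whose rank-two lattice makes the computation vanish or remain undecided, is exactly what forces the qualifier ``except possibly ${\sf G}_2$''.
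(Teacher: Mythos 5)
Your overall architecture coincides with the paper's: reduce via Theorem~\ref{thm.reduction} and Proposition~\ref{prop.reductive} to the rationality of $G_{K_{\rm gen}}/T_{\rm gen}$, and attack (b) through the unramified Brauer group formula of \cite{col-k}. Within that frame, your type ${\sf A}_n$ argument is correct and genuinely different from the paper's: the companion-matrix slice $\{g : ge_1 = e_1\}$ gives a direct birational trivialization of $\mathfrak{gl}_{n+1}$ over $\mathfrak{gl}_{n+1}\catq\GL_{n+1}$, where the paper instead invokes Lemma~\ref{lem3.15} with $H = U \rtimes \SL_n$ the stabilizer of a vector, speciality of $H$, and L\"uroth applied to the one-dimensional double coset space $H_K \ssetminus G_K/T_{\rm gen}$. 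One caution on your descent step: purity (unlike stable purity) does not pass from $E(t)/F(t)$ to $E/F$, so going from $\mathfrak{gl}_{n+1}$ to $\mathfrak{sl}_{n+1}$ is not just ``splitting off the center''; it works, but via the versality transfer of Theorem~\ref{thm1'}(b) together with Corollary~\ref{cor.versal-c}(b) and Proposition~\ref{prop.reductive}, which you should cite explicitly.

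There are, however, two genuine gaps. First, your type ${\sf C}_n$ argument proves at most \emph{stable} purity, not the purity asserted in (a). The lattice-theoretic input you invoke (a permutation-type resolution of the character lattice, plus speciality of $\Sympl_{2n}$) is exactly what yields Proposition~\ref{stablerationalityAC}, i.e., stable rationality of $G/T_{\rm gen}$ via Proposition~\ref{prop4.4bis}; and note that the relevant condition there concerns the \emph{dual} torus $T^0$ ($T^*$ as a quotient of permutation lattices), not $T$ itself. Your inference ``$T$ rational, hence $G/T$ rational'' has no justification --- Remark~\ref{rem.G/T-vs-T} warns precisely that no such implication is known --- and no ``symplectic normal form'' slice is actually exhibited. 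The paper's proof of genuine rationality in type ${\sf C}_n$ requires the vector stabilizer $H \cong U \rtimes \Sympl_{2n-2}$, Witt's extension theorem identifying $H \ssetminus G$ with $\bbA^{2n}\setminus\{0\}$ so that the problem becomes rationality of $\bbA^{2n}/T_{\rm gen}$, and the nontrivial Lemma~\ref{lem.elementary} showing that for $n \geqslant 3$ \emph{every} surjection of a rank-$2n$ permutation $W({\sf C}_n)$-lattice onto $P({\sf C}_n)$ is $W$-isomorphic to the standard sequence, whence the quotient torus $T_2$ is quasitrivial. Second, in (b) you both misidentify the invariant and defer the decisive computation. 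For $G$ semisimple simply connected, the unramified Brauer group of a smooth compactification of $G/T$ modulo $\Br K$ is $\Sha^1_{\omega}(K,T^*)$, an $H^1$-type group (Theorem~\ref{prop4.4} and Remark~\ref{Brauergp}); the $H^2$-type group $\Sha^2_{\omega}$ you name governs \emph{tori}, as in the paper's Appendix for $T^0$. And the ``combinatorial step'' you flag as the main obstacle is the actual content of the proof: Proposition~\ref{prop5.3} and Claim~\ref{claim1} pass to the dual root system, reduce to subsystems of type ${\sf C}_3$ and ${\sf D}_4$ (with ${\sf F}_4$ handled through ${\sf D}_4$), exhibit a Klein four-subgroup $H \cong (\bbZ/2\bbZ)^2$ of $W$ with $J_H$ a direct summand of the root lattice, and conclude $\Sha^1_{\omega}(H, P({R})) \neq 0$, which obstructs any resolution $0 \to P_2 \to P_1 \to P({R}) \to 0$ by permutation lattices. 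Without this computation, part (b) remains unproved in your proposal.
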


To prove Theorem~\ref{thm1}, we show that the two equivalent
Questions~\ref{q1}(a) and (b) are equivalent to the question of
(stable) $K_{\rm gen}$-rationality of the homogeneous space
$G_{K_{\rm gen}}/T_{\rm gen}$, where $T_{\rm gen}$ is the generic
torus of $G$, defined over the field $ K_{\rm gen}$; see
Theorem~\ref{thm.reduction}. (For the definition of $T_{\rm gen}$
and $K_{\rm gen}$ see \S\ref{thegenerictorus}.) We then address this
rationality problem for $G_{K_{\rm gen}}/T_{\rm gen}$
  by using the main result of \cite{col-k}, which gives
a formula for the unramified Brauer group of a homogeneous space
with connected stabilizers; see~\S\ref{sectG/T}. This allows us to
prove Theorem~\ref{thm1}(b) in~\S\ref{sect.non-rationality} by
showing that if $\,G$ is not of type ${\sf A}_n$, ${\sf C}_n$, or
${\sf G}_2$, then the unramified Brauer group of $G_{K_{\rm
gen}}/T_{\rm gen}$ is nontrivial over some field extension of
$K_{\rm gen}$. This approach also yields Theorem~\ref{thm1}(a), with
``pure" replaced by ``stably pure" (Proposition
\ref{stablerationalityAC}). The proof of the purity assertion in
part (a) requires additional arguments, which are carried out
in~\S\ref{sect.rat}.

A novel feature of our approach is a systematic use of the notions
of $(G,S)$-fibration and versal $(G, S)$-fibration, generalizing the
well known notions of $G$-torsor and  versal $G$-torsor; cf.,
e.g.,~\cite[Sect.\;1.5]{gms}. Here $S$ is a $k$-subgroup of $G$. For
details we refer the reader to Sections \ref{quotientsGSfib} and
\ref{sect.versal}.

As a byproduct of our investigations we obtain the following two
results which are not directly related to Questions~\ref{q1} but
are, in our opinion, of independent interest. Recall that a
connected reductive group over a field $k$ is called {\it
quasisplit} if it has a Borel subgroup defined over $k$.

\begin{thm}
[Corollary~\ref{cor.steinberg2}(a)] \label{thm.steinberg0} Let $G$
be a connected quasisplit
 reductive group over $k$.
Then the categorical quotient map $G \to G \catq G$ for the
conjugation action has a rational section.
\end{thm}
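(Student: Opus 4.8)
The plan is to recast the existence of a rational section as the existence of a rational point on a single homogeneous space, and then to produce that point by a Steinberg-type cross-section descended to $k$. Write $\pi\colon G\to G\catq G$ for the conjugation quotient and set $K:=k(G)^G=k(G\catq G)$. A rational section of $\pi$ exists if and only if the generic fibre of $\pi$ carries a $K$-point. That generic fibre is the generic (regular semisimple) conjugacy class; over $\bar K$ it is a homogeneous space isomorphic to $G_{\bar K}/T$ for a maximal torus $T=Z_G(\gamma)$, yet it has no \emph{obvious} $K$-point. Indeed the natural base coset corresponds to a representative $\gamma\in T(\bar K)$ of the generic class, and such a $\gamma$ is defined only over the degree-$|W|$ extension $k(T)/K$ arising from the quotient $T\to T\catq W\cong G\catq G$. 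Thus the section is a genuinely nontrivial object, and the task is to exhibit one element of $G(K)$ lying in the generic class.

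The second step is to reduce to the split simply connected semisimple case and to solve it there. By Proposition~\ref{prop.reductive} I may replace $G$ by a quotient by any central subgroup; since the radical is a central torus on which conjugation acts trivially, it can be split off, reducing the problem to $G$ semisimple, and a further application of the same central-quotient reduction to the simply connected cover $\tilde G\to G$ leaves only the case where $G$ is split simply connected semisimple. Here I would invoke Steinberg's construction \cite{steinberg1}: fixing a pinning $(T,B,\{u_{\alpha_i}\})$ and representatives $\dot s_i$ of the simple reflections, the subvariety $C=\{\,u_{\alpha_1}(x_1)\dot s_1\cdots u_{\alpha_r}(x_r)\dot s_r\,\}$ maps isomorphically onto $G\catq G$, giving an honest \emph{regular} section defined over $k$.

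The final, and hardest, step is the descent from split to quasisplit. A quasisplit $G$ admits a Borel subgroup, hence a pinning, defined over $k$, with the Galois group acting on the simple roots through the $*$-action on the Dynkin diagram. I would choose the input data $(u_{\alpha_i},\dot s_i)$ of the cross-section $C$ Galois-equivariantly, organising the ordering of the factors compatibly with the $*$-action so that $C$ is Galois-stable and the resulting section descends to $k$. The main obstacle is precisely this equivariance: when the $*$-action is nontrivial (types ${}^2\!{\sf A}_n$, ${}^2\!{\sf D}_n$, ${}^3\!{\sf D}_4$, ${}^2\!{\sf E}_6$) the simple reflections are permuted, so Steinberg's product must be grouped into Galois-orbits to remain well defined after descent; at the same time, for non-simply-connected $G$ only a \emph{rational} (not regular) section survives the central-quotient reductions, so the isogeny and the centre must be tracked birationally. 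Once the equivariance is arranged, the descended $C$ yields the required rational section, which is exactly the assertion of Corollary~\ref{cor.steinberg2}(a).
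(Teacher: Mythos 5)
Your proposal has a genuine gap, and it sits exactly where the theorem's difficulty lies. Your second step claims that the central-quotient reduction (Proposition~\ref{prop.reductive}) lets you pass to the simply connected cover and then quote Steinberg's regular cross-section. But Proposition~\ref{prop.reductive} is a statement about purity of $k(\g)/k(\g)^G$, not about sections of $G \to G \catq G$, and rational sections do \emph{not} descend through central isogenies. If $\widetilde{G} \to G=\widetilde{G}/Z$ is the cover, the induced map $\widetilde{G} \catq \widetilde{G} \to G \catq G$ is a finite morphism of degree $>1$, and a finite cover of degree $>1$ admits no rational section; so composing Steinberg's section for $\widetilde{G}$ with $\widetilde{G}\to G$ produces a section over the wrong base, with no way to get down to $G \catq G$. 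Concretely, for $G=\PGL_2$ (the very example in Grothendieck's letter): the companion-matrix section of $\SL_2 \to \bbA^1$ pushed to $\PGL_2$ lives over the coordinate $t=\tr$, while $\PGL_2\catq\PGL_2=\bbA^1$ has coordinate $(\tr g)^2/\det g = t^2$, so your descent would require a rational section of $t \mapsto t^2$, which does not exist. The same problem already occurs when you "split off" the radical, since $G$ is only isogenous, not isomorphic, to $(G,G)\times\Rad(G)$. Note that the paper records (citing \cite{popov5}) that for non-simply-connected $G$ there is no \emph{regular} section at all, so the non-simply-connected case is irreducibly the heart of the matter and cannot be disposed of by "tracking the isogeny birationally" without a genuine new mechanism. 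Your third step (Galois-equivariant descent of the pinned cross-section to quasisplit forms) is plausible for simply connected groups, but it cannot repair step two.

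The paper's actual argument avoids isogenies entirely and, as it states explicitly, does not use Steinberg's theorem. The conjugation action of $G$ on itself factors through the adjoint group, and both $G$ and $\g$ are $(G,S)$-varieties for a maximal $k$-torus $S$; crucially, $G$ is a \emph{versal} $(G,S)$-variety because $G^S=S$ is connected (Proposition~\ref{prop.versal}(c), via Proposition~\ref{prop.versal0}), and likewise $\g$ is versal. Kostant's theorem (Proposition~\ref{prop.kostant}, in the quasisplit form of \cite{kottwitz2}) gives a regular section of $\g \to \g\catq G$ for \emph{any} quasisplit reductive $G$, irrespective of isogeny type --- this is what replaces Steinberg's cross-section. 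The transfer principle for versal $(G,S)$-varieties (Theorem~\ref{thm1'}(a), via Lemma~\ref{lem.fibreing}(a)) then carries the section from the Lie algebra to every $(G,S)$-variety, in particular to $G$ itself under conjugation, yielding the rational section of $G \to G\catq G$. If you want to salvage a Steinberg-style proof, you would need a separate argument for the non-simply-connected case (the paper's Remark~\ref{rem.cayley} shows how a Cayley map does this for Cayley groups such as $\PGL_n$), but no reduction to the simply connected case along the lines you propose can work.
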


In the classical case $G= {\rm SL}_{n}$ such a section is given by
the companion matrices. The existence of a regular section for an
arbitrary connected, split, semisimple, simply connected  group $G$
is a theorem of Steinberg~\cite[Theorems 1.4 and 1.6]{steinberg1}.
In a letter to Serre, dated January 15, 1969, Grothendieck asked
whether or not a rational section exists if $G$ is not assumed to be
simply connected; see~\cite[p.\,240]{G-S}\footnote{The exact quote
is as follows: {`Le th\'eor\`eme de Steinberg [\dots ] est-il vrai
uniquement pour le [groupe] simplement connexe [\dots ]~? Que se
passe-t-il par exemple pour ${\rm GP}(1)$~? Y a-t-il une section
rationnelle de $G$ sur $I(G)$ (``invariants'') dans ce cas~?'} }.
Theorem~\ref{thm.steinberg0} answers this question in the
affirmative. The example specifically mentioned by Grothendieck is
$\PGL_2$, or $\operatorname{GP}(1)$, in his notation; an explicit
rational section in this case is constructed in
Remark~\ref{rem.cayley}. Our proof of Theorem~\ref{thm.steinberg0}
does not use Steinberg's result, but it uses Kostant's result on the
existence of sections in the Lie algebra case (\cite{kostant},~
\cite{kottwitz2}).

If $k$ is algebraically closed and $G$ is not simply connected, then
by \cite{popov5} there is no regular section of the categorical
quotient map $G \to G \catq G$.

\begin{thm}
[see Propositions~\ref{prop5.3}
and~\ref{prop.weight-lattice}]\label{thm1.1} Let $G$ be a connected,
split,   simple, simply connected algebraic group defined over $k$
and let $W$ be the Weyl group of $\,G$. The weight lattice $P(G)$ of
$\,G$  fits into an exact sequence of $\,W$-lattices\vskip 1.3mm
$$0 \to P_{2} \to P_{1} \to P(G) \to 0 \, , $$
\vskip 3.3mm\noindent with $P_{1}$ and $P_{2}$ permutation, if and
only if $G$ is of type ${\sf A}_{n}$, ${\sf C}_{n}$, or ${\sf G}_2$.
\end{thm}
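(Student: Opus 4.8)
The plan is to reduce the assertion to a problem about $W$-lattices and then verify it type by type. First I would recall the standard exact sequence of $W$-lattices relating the weight lattice $P(G)$, the root lattice $Q$, and their duals (the coweight and coroot lattices), and the fact that for a simply connected group the weight lattice $P(G)$ equals the full weight lattice $P$ associated to the root system. The key observation is that a $W$-lattice $M$ admits a presentation $0\to P_2\to P_1\to M\to 0$ with $P_1,P_2$ permutation lattices precisely when $M$ is what is classically called an \emph{invertible} (or permutation-projective) $W$-lattice: one that is a direct summand of a permutation lattice. So I would first translate the stated condition into the statement that $P(G)$ is an invertible $W$-lattice. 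The natural candidate for a permutation resolution comes from the lattices $\bbZ[W/W_i]$, where $W_i$ is the stabilizer of a fundamental weight $\varpi_i$ (equivalently, a maximal parabolic subgroup of $W$), since the $W$-orbit of each $\varpi_i$ is a permutation $W$-set whose associated lattice surjects onto the sublattice generated by that orbit.

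Next I would set up the concrete surjection. For each fundamental weight $\varpi_i$ one has a $W$-equivariant map $\bbZ[W\varpi_i]\to P(G)$, $[w\varpi_i]\mapsto w\varpi_i$; taking the direct sum over all $i$ (or over a generating set of fundamental weights) gives a surjection $P_1:=\bigoplus_i\bbZ[W\varpi_i]\twoheadrightarrow P(G)$ of $W$-lattices, and $P_1$ is permutation by construction. Writing $P_2$ for the kernel, the sequence $0\to P_2\to P_1\to P(G)\to 0$ is automatic; the entire content of the theorem is therefore the claim that $P_2$ can be chosen to be \emph{permutation} exactly for types ${\sf A}_n$, ${\sf C}_n$, ${\sf G}_2$, and for no other type. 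The forward direction (constructing the resolution) amounts to exhibiting, for each of these three types, an explicit permutation $P_1$ and checking that the resulting kernel $P_2$ is itself permutation; this is a finite, case-by-case computation with the explicit root data. For ${\sf A}_n$ one expects $P(G)$ itself to be closely tied to the standard permutation module $\bbZ^{n+1}$ for the symmetric group $W=S_{n+1}$, so the resolution should be short and transparent; for ${\sf C}_n$ (where $W$ is the hyperoctahedral group) and for the small group ${\sf G}_2$ (Weyl group of order $12$) I would compute the relevant modules directly.

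The hard part is the negative direction: showing that for all remaining types (${\sf B}_n$ with $n\ge2$, ${\sf D}_n$, ${\sf E}_6$, ${\sf E}_7$, ${\sf E}_8$, ${\sf F}_4$) no permutation resolution of $P(G)$ exists. Here I would use a cohomological obstruction. Being a direct summand of a permutation lattice, an invertible $W$-lattice $M$ has vanishing $\hat H^{-1}(W', M)$ and, more usefully, controlled Tate cohomology $\hat H^i(W',M)$ for every subgroup $W'\le W$; in particular a permutation lattice $\bbZ[W/W']$ has flasque/coflasque-trivial behaviour. The standard strategy is to locate a subgroup $W'\le W$ (often a cyclic or Klein-four subgroup generated by reflections) for which $\hat H^1(W',P(G))$, or some related flasque-class invariant, is nonzero, which cannot happen if $P(G)$ were invertible. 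Concretely, since an exact sequence $0\to P_2\to P_1\to P(G)\to 0$ with $P_1,P_2$ permutation forces the flasque class of $P(G)$ (in the sense of Colliot-Th\'el\`ene–Sansuc) to be trivial, I would compute this flasque class — or equivalently produce a subgroup detecting a nonzero $\hat H^{-1}$ of the dual or a nontrivial $\Sha$-type group — and show it is nonzero for each of the excluded types. This detection is exactly the mechanism connecting the lattice statement to the unramified Brauer group computation used elsewhere in the paper, so I expect the nonvanishing to be extracted from the same $H^1$ or $\hat H^{-1}$ computations; assembling these detections uniformly across the remaining types, and confirming that the three ``good'' types genuinely escape the obstruction, is where the real work lies.
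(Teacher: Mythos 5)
Your reduction rests on a false equivalence, and the theorem itself refutes it. The existence of an exact sequence $0 \to P_{2} \to P_{1} \to P(G) \to 0$ with $P_{1},P_{2}$ permutation is \emph{not} equivalent to $P(G)$ being invertible (a direct summand of a permutation lattice); it is equivalent to the triviality of the \emph{coflasque} class of $P(G)$, which is strictly weaker. An invertible lattice $M$ satisfies $H^1(W',M)=0$ for \emph{every} subgroup $W'$, since $H^1$ of a permutation lattice vanishes and this passes to direct summands. Now take type ${\sf A}_n$: the resolution is $0 \to \bbZ \to \bigoplus_{i=1}^{n+1}\bbZ\varepsilon_i \to P({\sf A}_{n}) \to 0$ (the paper's sequence \eqref{A_{n}sequence}), and for the cyclic subgroup $C\subset S_{n+1}$ generated by an $(n+1)$-cycle the middle term is the regular module $\bbZ[C]$, so the long exact sequence gives $H^1(C,P({\sf A}_n))\cong H^2(C,\bbZ)\cong \bbZ/(n+1)\neq 0$. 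Thus $P({\sf A}_n)$ admits the required resolution but is not invertible, so your translation fails, and your proposed obstruction --- nonvanishing of $\hat H^{1}$ or $\hat H^{-1}$ over a subgroup that may be \emph{cyclic} --- is not an obstruction at all: applied literally it would ``disprove'' the ${\sf A}_n$ case of the theorem. The only cohomological consequence of such a resolution is $\Sha^1_{\omega}(W',P(G))=0$ for all subgroups $W'$ (classes in $H^1$ killed on restriction to every cyclic subgroup), and this invariant vanishes automatically on cyclic subgroups, so detection \emph{must} use noncyclic ones. That is precisely what the paper's Proposition~\ref{prop5.3} does: it passes to the dual root system, locates a subsystem of type ${\sf C}_3$ or ${\sf D}_4$ (with ${\sf F}_4$ handled via the coincidence $P({\sf F}_4)=P({\sf D}_4)$ and $W({\sf D}_4)\subset W({\sf F}_4)$), exhibits a Klein four-group $H$ for which the norm-cokernel lattice $J_H$ splits off $Q({R}')$, dualizes so that $I_H$ splits off the weight lattice, computes $\Sha^1_{\omega}(H,I_H)\cong\bbZ/2\bbZ$, and propagates the nonvanishing through the sequence \eqref{Psequence}. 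None of this concrete work, nor the correct invariant, is actually in place in your proposal.

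The positive direction is also a plan rather than a proof, though it is closer. For ${\sf A}_n$ and ${\sf C}_n$ your fundamental-weight-orbit idea does succeed: taking the orbit of $\varpi_1$ recovers exactly the paper's sequences, with permutation kernels $\bbZ$ and $\bigoplus_i\bbZ\gamma_i$ respectively --- but note that a non-permutation kernel for one choice of $P_1$ proves nothing, so each case needs a verified good choice. For ${\sf G}_2$, ``compute the relevant modules directly'' hides the actual difficulty: the orbit construction yields a kernel of rank $4$ whose permutation status is not evident, and the paper instead invokes the general fact (deduced from Voskresenski\u{\i}'s theorem that tori of dimension at most $2$ are rational) that \emph{every} $W$-lattice of rank at most $2$ admits a resolution by permutation lattices. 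Your proposal neither proves this for $P({\sf G}_2)$ nor supplies a substitute.
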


\vskip 2mm

Recently the results and methods of this paper have played an
important part in A.~Premet's (negative) solution of the
Gelfand--Kirillov conjecture for finite-dimensional simple Lie
algebras of every type, other than ${\sf A}_n$, ${\sf C}_n$, and
${\sf G}_2$. The Gelfand--Kirillov conjecture is known to be true
for Lie algebras of type ${\sf A}_n$ and is still open
 for the types ${\sf C}_n$ and ${\sf G}_2$.
For details, including background material on the Gelfand--Kirillov
conjecture, we refer the reader to~\cite{premet}.

\vskip 2mm

This paper is dedicated to Valentin Evgen'yevich Voskresenski\u{\i},
who turned 80 in 2007. Professor Voskresenski\u{\i}'s work (see
 \cite{voskresenskii-rus, voskresenskii}) was
the starting point for many of the methods and ideas used in the
present paper.

 \vskip 3mm

 {\bf Some terminology}

 \vskip 1mm

By definition, a $k$-variety is a separated $k$-scheme of finite
type. If $X$ is a $k$-variety, it is naturally equipped with its
structure morphism $X \to \Spec k$. As a consequence, any Zariski
open set $U \subset X$ is naturally a $k$-variety.

The fibre product over $\Spec k$ of two $k$-schemes $X$ and $Y$ is
denoted $X\times_{k}Y$, or simply  $X \times Y$ when the context is
clear.

The set   of $k$-rational points of a $k$-variety $X$ is defined by
$X(k)={\rm Mor}_{k}(\Spec k, X)$.

An algebraic group over $k$, sometimes simply called a $k$-group,
is a $k$-variety equipped with a structure of algebraic group over
$k$. In other words,
  there is a multiplication morphism $G \times_{k}G \to G$ and a neutral element
in $G(k)$ satisfying the usual properties. In other terms, it is a
$k$-group scheme of finite type.

The ring $k[X]$ is the ring of global sections of the sheaf
${\mathcal O}_{X}$ over the $k$-variety $X$. The group
$k[X]^{\times}$ is the group of invertible elements of $k[X]$.

If $X$ is an integral (i.e., reduced and irreducible) $k$-variety,
we let $k(X)$ be the field of rational functions on $X$. This is the
direct limit of the fields of fractions of the $k$-algebras $k[U]$
for $U$ running through the dense open subsets $U$ of $X$.

When  we consider two $k$-varieties $X$ and $Y$, a $k$-morphism from
$X$ to $Y$ will sometimes simply be called a morphism or even a map.

Similarly, if $H$ and $G$ are algebraic groups  over $k$, if the
context is clear, a $k$-homomorphism of $k$-group schemes from $H$
to $G$ will sometimes simply be called a homomorphism, or even a
morphism.

For any field extension $K/k$, we may consider the $K$-variety
$X_{K} =X\times_{k}K$, where the latter expression is shorthand for
$X \times_{\Spec\hskip .15mm k} {\Spec}\, K$. We write
$K[X]=K[X_{K}]$. If the $K$-variety $X_{K}$ is integral, we let
$K(X)$ be the function field of $X_{K}$.

An integral $k$-variety $X$ is called {\it stably $k$-rational}  if
its function field $k(X)$ is stably rational over $k$ (see the first
paragraph of the Introduction) or, equivalently, if there exists a
$k$-birational isomorphism between $X \times_{k} \bbA_{k}^n$ and
$\bbA_{k}^m$ for some integers $n,m \geqslant 0$. If $n=0$, $X$ is
called {\it $k$-rational}.


\section{Preliminaries on lattices, tori, and special groups}
\label{sect2}

For the details on the results of this section, see \cite{CTSa},
\cite{CTSb}, \cite{voskresenskii}, or \cite{lorenz}.

\subsection{\boldmath$\Gamma$-lattices}
\label{sect2.1}

Let $\Gamma$ be a finite group. A $\Gamma$-{\it lattice}   $M$ is a
free abelian group of finite type equipped with a
  homomorphism $\Gamma \to  {\rm Aut}(M)$.
 When the context is clear, we shall say lattice instead of $\Gamma$-lattice.

In this subsection we recall some basic properties of such lattices.

The {\it dual lattice} of a lattice $M$ is the lattice
$M^0=\Hom_{\bbZ}(M,\bbZ)$ where for $\gamma\in \Gamma$, $m\in M$ and
$\varphi \in M^0$, we have
  $(\gamma\cdot\varphi)(m)=\varphi(\gamma^{-1}\cdot m)$.

A {\it permutation lattice} is a lattice which has a $\bbZ$-module
basis whose elements   are permuted by $\Gamma$. The dual lattice of
a permutation lattice is a permutation lattice.

Any lattice $M$ may be realized as a sublattice of a permutation
lattice $P$ with torsion-free quotient $P/M$ (\cite{CTSb}  Lemma
0.6).

Two lattices $M_{1}$ and $M_{2}$ are called {\it stably equivalent}
if there exist permutation lattices $P_{1}$ and $P_{2}$ and an
isomorphism $M_{1} \oplus P_{1} \cong M_{2} \oplus P_{2}$.

A lattice $M$ is  called a {\it stably permutation lattice} if there
exist permutation lattices $P_{1}$ and $P_{2}$ and an isomorphism $M
\oplus P_{1} \cong P_{2}$.

A lattice $M$ is called {\it invertible} if there exists a lattice
$N$ such that $M \oplus N$ is a permutation lattice.

In these definitions one may replace $\Gamma$ by its image   in the
group of automorphisms of $M$. Because of this one may  give the
analogous definitions for $\Gamma$  a profinite group with a
continuous and discrete action.

  Let  $M$ be a  $\Gamma$-lattice. For any integer $i\geqslant 0$ one writes
$$\Sha^{i}_{\omega}(\Gamma,M)=
{\rm Ker} \Bigl[H^{i}(\Gamma,M) \to \prod_{\gamma \hskip1mm \in
\hskip1mm \Gamma} H^{i}(\langle \gamma \rangle,M)\Bigr].$$ For
$i=1,2$, this kernel only depends on the image of $\Gamma$ in the
group of automorphisms of $M$. So for these $i$ it is natural to
extend the above definition to the case where $\Gamma$ is a
profinite group and the action is continuous and discrete.

If $\Gamma$ is the absolute Galois group of a field $K$, one refers
to lattices as Galois lattices and one uses the notation
$\Sha^{i}_{\omega}(K,M)$.

If $M$ is a permutation lattice, then for any subgroup $\Gamma'$ of
$\Gamma$,
$$H^1(\Gamma',M)=0, \hskip3mm  H^1(\Gamma',M^0)=0.$$
 Moreover,
$$\Sha^{2}_{\omega}(\Gamma',M)=0, \hskip3mm \Sha^{2}_{\omega}(\Gamma',M^0)=0.$$

If there exists an exact sequence
$$ 0 \to P_{1 } \to P_{2} \to M \to 0$$
with $P_{1}$ and $P_{2}$ permutation lattices, then
$\Sha^{1}_{\omega}(\Gamma',M)=0$ for any subgroup $\Gamma'$ of
$\Gamma$.

\subsection{Tori}
\label{sect2.2}

Let $K$ be a field, let $K_{\rm s}$ be a separable closure of $K$,
and let $\Gamma$ denote the Galois group of $K_{\rm s}/K$.  A $
K$-torus  $T$ is an algebraic $K$-group which over an algebraic
field extension $L/K$ is isomorphic to a product of copies of the
multiplicative group $\bbG_{m,L}$. The field $L$ is then called {\it
a splitting field} for $T$. Inside $K_{\rm s}$ there is a smallest
splitting field for $T$, it is a finite Galois extension of $K$,
called {\it the splitting field} of $T$.

To any $K$-torus $T$ one may associate two $\Gamma$-lattices: its
(geometric) {\it character group}
$$T^*=\Hom_{K_{\rm s}\mbox{\rm \scriptsize -gr}}(T_{K_{\rm s}},\bbG_{m,K_{\rm s}} ) $$ and
its  (geometric) {\it cocharacter group}
$$T_{*}=\Hom_{K_{\rm s}\mbox{\rm \scriptsize
-gr}}(\bbG_{m,K_{\rm s}} ,  T_{K_{\rm s}}).$$ These two
$\Gamma$-lattices are dual of each other.

The association  $T \mapsto T_{*}$ defines an equivalence between
the category of $K$-tori and the category of $\Gamma$-lattices. The
association $T \mapsto T^*$ defines a duality (anti-equivalence)
between the category of $K$-tori and the category of
$\Gamma$-lattices.

The $K$-torus whose character group is  $T_{*}$ is denoted $T^0$ and
is called {\em the torus dual to $T$}.

A $K$-torus is called {\it quasitrivial}, respectively,\;{\it stably
quasitrivial}, if its character group, or equivalently its
cocharacter group, is a permutation lattice, respectively,\;is a
stably permutation lattice. A quasitrivial torus $T$ is
$K$-isomorphic to a product of tori of the shape
  $R_{L/K}\bbG_{m}$, i.e., Weil restriction of scalars of the multiplicative
  group
$\bbG_{m,L}$ from $L$ to $K$, where $L/K$ is a finite separable
  field extension. A quasitrivial $K$-torus is an open set
of an affine $K$-space, hence is $K$-rational.

By a theorem of Voskresenski\u{\i}, a  $K$-torus of dimension at
most 2 is $K$-rational (\cite[\S 2.4.9, Examples 6 and
7]{voskresenskii}). This implies the following property. For any
$\Gamma$-lattice $M$ which is a direct sum of lattices of rank at
most 2, there exist exact sequences
\begin{gather*}
0 \to P_{2} \to P_{1} \to M \to 0,\\
\end{gather*}
where the $P_{i}$'s and $P'_{i}$'s are permutation lattices.

\subsection{Special groups}
\label{sect2.3}

  Let $K$ be a field of characteristic zero. Recall from \cite{serre}  that
an algebraic group $G $ over $K$ is called {\em special} if for any
field extension $L/K$, the Galois cohomology set $H^1(L,G)$ is
reduced to one point. In other words, $G$ is special if every
principal homogeneous space under $G$ over a field containing $K$ is
trivial. Such a group is automatically linear and
connected~\cite{serre}. An extension of a special group by a special
group is a special group. A unipotent group is special. A
quasitrivial torus is special. So is a direct factor of a
quasitrivial torus (such a $K$-torus need not be stably
$K$-rational). If $K$ is algebraically closed and $G$ is semisimple,
then $G$ is special if and only if it is isomorphic to a direct
product
\[ \SL_{n_1} \times \dots \times \SL_{n_r} \times
\Sympl_{2m_1} \times \dots \times \Sympl_{2m_s} \] for some integers
$r, s, n_1, \dots, n_r, m_1, \ldots, m_s$. That such groups are
special is proved in~\cite{serre}, that only these are is proved
in~\cite{grothendieck}.


\section{
Quotients, $(G,S)$-fibrations, and
$(G,S)$-varieties}\label{quotientsGSfib}

We recall that $k$ is a field of characteristic zero. Let $\kbar$ be
an algebraic closure of $k$,  and let $G$ be a (not necessarily
connected) linear algebraic group over $k$.

\subsection{Geometric quotients}
\label{sect.quotients}

 Let  us recall some standard definitions and facts.
For references, see \cite[Sect.\,II.6]{borel},
\cite[Sect.\,12]{humphreys}, \cite[Sects.\,5.5,\,12.2]{springer},
\cite[Sect.\,4]{popov-vinberg},
 and \cite{chili}.

 Let $X$ be a $k$-variety endowed with an  action of the $k$-group  $G$.
 A {\it geometric quotient} of $X$ by $G$ is a pair $(Y, \pi)$,
where $Y$ is a $k$-variety, called the {\it quotient space}, and
$\pi\colon X\to Y$ is a $k$-morphism, called the {\it quotient map},
such that
\begin{enumerate}

 \item[(i)] $\pi$ is an open {\it orbit map}, i.e., constant on $G$-orbits and  induces  a bijection
 of $X(\kbar)/G(\kbar)$ with $Y(\kbar)$;

 \item[(ii)] for every open subset $V$ of $Y$, the natural  homomorphism
 $\pi^*\colon k[V] \to k[\pi^{-1}(V)]^G$ is an isomorphism.
\end{enumerate}

 If such a pair $(Y, \pi)$ exists, it has the {\it universal mapping property}, i.e., for every $k$-morphism $\alpha\colon X\to Z$ constant on the fibres of $\pi$, there is a unique $k$-morphism $\beta\colon Y\to Z$
such that $\alpha=\beta\circ \pi$. In particular, $(Y, \pi)$ is
unique up to a unique $G$-equivariant isomorphism of total spaces
commuting with quotient maps. Given this, we shall denote $Y$ by
$X/G$.

\begin{remark} \label{rem.separate}
 Let $X$ and hence $X/G$ be geometrically integral. Being constant on $G$-orbits,  $\pi$
induces an embedding of fields $\pi^* \colon  k(X/G) \hookrightarrow
k(X)^G$. Conditions (i) and (ii) imply that, in fact, the latter is
an isomorphism of fields
 $\pi^*\colon  k(X/G) \oii k(X)^G$,
 see, e.g.,\;\cite[II, 6.5]{borel} (this property holds for
 the ground field $k$ of arbitrary characteristic; for ${\rm char}(k)=0$, it follows already from condition (i),
see, e.g.,\;\cite[Lemma 2.1]{popov-vinberg}).
\end{remark}

 If $G$ acts on a
 reduced
 $k$-variety $Z$ whose irreducible components are open,
 $B$ is a normal $k$-variety, $\varrho\colon Z \to B$ is a $k$-morphism
 constant on $G$-orbits, and $\varrho$ induces a bijection of
 $Z(\kbar)/G(\kbar)$ with
 $B(\kbar)$, then $(B, \varrho)$ is
 the geometric quotient  of $Z$ by $G$; see \cite[Prop.\;6.6]{borel}.

\begin{example}\label{G/H}
 If  $H$ is a closed $k$-subgroup of $G$,
the action of $H$ on $G$ by right translation gives rise to a
geometric quotient ${\pi}^{}_{G, H}\colon G \to  G/H$ called the
quotient of $G$ by $H$. The group $G$ acts on $G/H$ by left
translation and, up to $G$-isomorphism, $G/H$ is uniquely defined
among the homogeneous spaces of $G$ by the corresponding universal
property, see, e.g., \cite[Sects.\,5.5, 12.2]{springer}
or~\cite[Sect.\,12]{humphreys}.
\end{example}

For any reduced $k$-variety $X$ endowed with a $G$-action,
 a theorem of Rosenlicht \cite{rosenlicht1}, \cite{rosenlicht2}
  (cf.\;also \cite[Sects.\;2.1--2.4]{popov-vinberg},
 \cite[Satz 2.2]{springer2},
 \cite[Prop.\;4.7]{thomason})
ensures that there exist a $G$-invariant dense open subset $\;U$ of
$X$, a $k$-variety $Y$, and a smooth $k$-morphism $\alpha\colon U
\to Y$ such that
 $(Y, \alpha)$ is the geometric quotient of $\;U$ by $G$.

\subsection{\boldmath $(G,S)$-fibrations}\label{GSfibrations}
Consider the category $\mathcal M_G$ whose objects are $k$-morphisms
of $k$-schemes $\pi\colon X\to Y$ such that $X$ is endowed with an
action of $G$ and $\pi$ is constant on $G$-orbits, and a morphism of
$\pi_1\colon X_1\to Y_1$ to $\pi_2\colon X_2\to Y_2$ is a
commutative diagram
\begin{equation}\label{di}
\begin{matrix}
\xymatrix{X_1\ar[r]^{\alpha}\ar[d]_{\pi_1}&X_2\ar[d]^{\pi_2}\\
Y_1\ar[r]^{\beta}&Y_2}
\end{matrix}\quad,
\end{equation}
where  $\alpha$ and $\beta$ are $k$-morphisms and $\alpha$ is
$G$-equivariant. The notion of composition of morphisms is clear. A
morphism as in \eqref{di} is an isomorphism if and only if $\alpha$
and $\beta$ are isomorphisms.

 Let $\pi\colon X\to Y$ be an object of $\mathcal M_G$ and let $\mu\colon Z\to Y$ be a
$k$-morphism of $k$-schemes. Then $G$ acts on $X\times_Y Z$ via $X$,
and the second projection $X\times_Y Z\to Z$ is an object of
$\mathcal M_G$. We say that
 it is obtained from $\pi$ by the {\it base change~$\mu$}.

 \begin{defn}\label{fib}
 Let $F$ be a $k$-scheme endowed with an action  of $G$ and let $\pi\colon X\to Y$ be an object of $\mathcal M_G$.
The morphism $\pi$ is called
\begin{enumerate}
\item[(i)]
{\it trivial} (\hskip -.3mm{\it over} $Y$) {\it with fibre $F$} if
there exists an isomorphism between $\pi$ and ${\rm pr}_2\colon
F\times_k Y\to Y$ where $G$ acts on $F\times_k Y$ via $F$;

\item[(ii)]
{\it fibration} (\hskip -.3mm{\it over} $Y$) {\it with fibre $F$} if
$\pi$ becomes trivial with fibre $F$ after a surjective \'etale base
change $\mu\colon Y'\to Y$. In this case, we say that $\pi$ {\it is
trivialized by} $\mu$.
\end{enumerate}
  \end{defn}

\begin{example}\label{torsor}
If $F=G$ with the $G$-action by left translation, then the notion of
fibration over $Y$ with fibre $G$ coincides  with that of $\,G$-{\it
torsor} over $Y$.
\end{example}

The following definition extends the definition of a $G$-torsor (the
latter corresponds to the case where $S$ is the trivial subgroup
$\{1\}$).

 \begin{defn}\label{GS}
Let $S$ be a closed $k$-subgroup of $G$. A fibration with fibre
$G/S$, where $G$ acts on $G/S$ by left translation, is called $(G,
S)$-fibration.
 \end{defn}

If $X$ is a $k$-scheme endowed with an action of $G$ and there is a
$(G,S)$-fibration $X\to Y$, then we say that $X$ {\it admits the
structure of a $(G,S)$-fibration} over $Y$.

\begin{remark} \label{S} Replacing in Definition \ref{fib}(ii)
``\'etale'' by ``smooth'', one obtains an equivalent defi\-nition.
This follows from the fact that a surjective smooth morphism $Y' \to
Y$ admits sections locally for the \'etale topology on $Y$: there
exists an  \'etale surjective morphism $Y''\to Y$ such that
$Y'\times_{Y}Y'' \to Y''$ has a section (\cite[17.16.3]{EGA4}).
\end{remark}

\begin{remark}
If the $k$-scheme $X$ admits the structure of a $(G,S)$-fibration,
then it admits the structure of a $(G,S')$-fibration for any
$k$-subgroup $S'$ of $G$ such that $S_{\kbar}$ and ${S'}\hskip
-1mm_{\kbar}$ are conjugate subgroups of $G_{\kbar}$. Such
$k$-groups $S$ and $S'$ need not be $k$-isomorphic.
\end{remark}

We list some immediate properties without proof.
\begin{prop}\label{properties}
 Let $\pi\colon X \to Y$ be a $(G,S)$-fibration.
Then the following properties hold:
\begin{enumerate}
\item[\rm(i)]
$\pi$ is a smooth surjective morphism;

\item[\rm(ii)]
a morphism obtained from $\pi$ by a base change is a $(G,
S)$-fibration.
\end{enumerate}
\noindent Assume that $X$ is a $k$-variety. Then

\begin{enumerate}
\item[\rm (iii)] the $G$-stabilizers of
points of $\;X(\overline k)$ are conjugate to the subgroup
$S_{\overline k}$ of $\;G_{\overline k}$;

\item[\rm(iv)]
$(Y, \pi)$ is the geometric quotient of $\;X$ by $G$. \qed
\end{enumerate}
\end{prop}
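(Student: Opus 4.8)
The plan is to deduce all four assertions by reduction to the trivial case followed by faithfully flat descent. I would fix a surjective \'etale base change $\mu\colon Y'\to Y$ trivializing $\pi$, so that the base change $\pi'\colon X':=X\times_Y Y'\to Y'$ is $G$-equivariantly isomorphic to $\pr_2\colon (G/S)\times_k Y'\to Y'$, with $G$ acting through the first factor. Since $\mu$ is surjective \'etale between finite-type $k$-schemes, it is faithfully flat and quasi-compact, so any property of $\pi$ that is stable under the base change $\mu$ and fppf-local on the target can be read off from $\pi'$, and hence from the structure morphism $G/S\to\Spec k$. Two standing facts I would invoke: in characteristic zero every homogeneous space $G/S$ is a smooth $k$-variety, and the only $G$-invariant regular functions on $G/S$ are the constants, because $G(\kbar)$ acts transitively on $(G/S)(\kbar)$.

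For (i), the structure morphism $G/S\to\Spec k$ is smooth and surjective, so its base change $\pi'$ is too; smoothness descends along the faithfully flat quasi-compact $\mu$, giving $\pi$ smooth, and $\pi$ is surjective because lifting a point of $Y$ through $\mu$ and then through the surjective $\pi'$ produces a preimage. For (ii), given $\nu\colon Z\to Y$ I would set $Z':=Z\times_Y Y'$, so that $\mu_Z\colon Z'\to Z$ is again surjective \'etale, and use associativity of fibre products together with $X\times_Y Y'\cong(G/S)\times_k Y'$:
$$(X\times_Y Z)\times_Z Z' \;\cong\; X\times_Y Z' \;\cong\; \bigl((G/S)\times_k Y'\bigr)\times_{Y'} Z' \;\cong\; (G/S)\times_k Z',$$
$G$-equivariantly over $Z'$. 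Thus $\mu_Z$ trivializes the base-changed morphism $X\times_Y Z\to Z$, which is therefore a $(G,S)$-fibration.

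Now assuming $X$ is a $k$-variety, for (iii) I would take $x\in X(\kbar)$, set $y=\pi(x)$, and use that $\mu$ surjective \'etale forces $\mu^{-1}(y)$ to contain a $\kbar$-point $y'$; the fibre of $X'\to Y'$ over $y'$ maps $G$-equivariantly isomorphically onto $\pi^{-1}(y)$, so $x$ lifts to $x'\in X'(\kbar)$ with $\Stab_G(x)=\Stab_G(x')$. Under $X'\cong(G/S)\times_k Y'$ the point $x'$ corresponds to some $(gS,y')$, and the stabilizer of $gS$ for left translation on $G/S$ is $gS_{\kbar}g^{-1}$; hence $\Stab_G(x)_{\kbar}=gS_{\kbar}g^{-1}$ is conjugate to $S_{\kbar}$.

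For (iv), I would verify the two conditions of \S\ref{sect.quotients}: $\pi$ is constant on $G$-orbits by membership in $\mathcal M_G$ and is open since it is smooth by (i); it induces a bijection $X(\kbar)/G(\kbar)\to Y(\kbar)$, surjectivity being surjectivity of $\pi$ and injectivity following from (iii) once one notes that after passing to $X'\cong(G/S)\times_k Y'$ each geometric fibre of $\pi$ becomes the single $G(\kbar)$-orbit $(G/S)_{\kbar}$. For condition (ii) I must show $(\pi_*\O_X)^G=\O_Y$; formation of these invariants commutes with the flat base change $\mu$, so it suffices to check this after pulling back to $Y'$, where taking $G$-invariant sections of $\pi'_*\O_{X'}$ returns $\O_{Y'}$ precisely because the only $G$-invariant global functions on $G/S$ are constants, and faithful flatness of $\mu$ then transfers the equality back to $Y$. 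The main obstacle is exactly this last point: justifying that invariants in the structure sheaf commute with $\mu$ and that the $G$-invariant functions on $G/S$ are constant; once granted, everything else is formal descent along a faithfully flat morphism.
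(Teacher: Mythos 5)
Your proof is correct; note that the paper itself offers no argument here---it introduces the statement with ``We list some immediate properties without proof''---and your \'etale-descent verification (smoothness and surjectivity are fppf-local on the target, base changes of trivializing covers trivialize, stabilizers of $(gS,y')$ are $gS_{\kbar}g^{-1}$, and $k[G/S]^G=k$ combined with flat base change and descent gives the quotient property) is precisely the routine check the authors had in mind. The only quibbles are cosmetic: the trivializing cover $Y'$ in Definition~\ref{fib} is not assumed of finite type, so rather than invoking quasi-compactness of $\mu$ you should either refine $Y'$ to a quasi-compact cover (possible once $Y$ is quasi-compact) or simply cite that smoothness and the quasi-coherent-sheaf isomorphism in (iv) descend along any faithfully flat morphism, stalkwise---both standard and exactly as you indicate.
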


If a $k$-group $S$ acts on a $k$-variety $Z$, then the functor on
commutative $k$-algebras $A \mapsto Z(A)^{S(A)}$
 defines a sheaf for the fppf topology on $\Spec k$.
This functor is representable by a closed $k$-subvariety $Z^S$ of
$Z$ (see \cite{fogarty} and \cite{sga3new}).

Let $Y$ be a $k$-variety and let $Z\to Y$ be an object of $\mathcal
M_S$. If $Y'$ is a $k$-variety and $Y'\to Y$ is a $k$-morphism, then
the natural $Y'$-morphism $Z^S\times_{Y}Y' \to (Z\times_{Y}Y')^S$
is an isomorphism
\begin{equation}\label{isoS}
Z^S\times_{Y}Y' \overset{\cong}\to (Z\times_{Y}Y')^S.
\end{equation}

Let $S$ be a closed $k$-subgroup of a $k$-group $G$ and let $N$ be
the {\it normalizer} of $\,S$ in $G$. Assume that $X$ is a
$k$-variety endowed with an action of $G$. Then the subvariety $X^S$
is $N$-stable and, since $S$ acts trivially on it, the action of
$\;N$ on $X^S$ descends to an action of the group
\begin{equation}\label{N/S}
H:=N/S.
\end{equation}

Let $Y$ be a $k$-variety and let $\pi\colon X\to Y$ be an object of
$\mathcal M_G$. Put
$$\pi^S:=\pi|_{X^S}\colon X^S\to Y$$
and let $\pi'\colon X\times_Y X^S\to X^S$ be the morphism obtained
from $\pi$ by the base change $\pi^S$:
\begin{equation*}
\begin{matrix}
\xymatrix@C=6mm{X\times_Y X^S\ar[r] \ar[d]_{\pi'
}& X\ar[d]^\pi\\
X^S\ar[r]^{\pi^S}& Y}
\end{matrix}\quad .
\end{equation*}
Since $\pi^S$ is constant on $H$-orbits, $H$ acts on $X\times _Y
X^S$ via $X^S$. The actions of $G$ and $H$ on $X\times_Y X^S$
commute. Therefore $X\times_Y X^S$ is endowed with an action of
$G\times_k H$. The morphism $\pi'$ is $H$-equivariant and constant
on $G$-orbits.

The group $H$ also acts on $G/S$ by right multiplication. This
action and the action of $H$ on $X^S$ determine the $H$-action on
$(G/S)\times_k X^S$. It commutes with the $G$-action on
$(G/S)\times_k X^S$ via left translation of $\,G/S$. Therefore
$(G/S)\times_k X^S$ is endowed with an action of $\,G\times_k H$.
The second projection $(G/S)\times_k X^S\to X^S$ is $H$-equivariant
and constant on $G$-orbits.

The natural morphism $H\to G/S$ yields the basic isomorphism
\begin{equation}\label{H}
H\xrightarrow{\cong}(G/S)^S.
\end{equation}

\begin{prop}\label{GSfibration0}
For every $(G,S)$-fibration $\pi\colon X \to Y$ where $X$ and $Y$
are $k$-varieties, the fol\-lowing properties hold:
\begin{enumerate}
\item[\rm(i)] $\pi^S\colon X^S \to Y$
is an $ H$-torsor and every base change trivializing $\pi$
trivializes $\pi^S$ as well;

\item[\rm(ii)] for the $(G, S)$-fibration $X\times_Y X^S\to X^S$ obtained from $\pi$ by the base change $\pi^S$,
the $(G\times_k H)$-equi\-variant
    $X^S$\hskip -.6mm-map
\begin{gather*}\label{GSX}
\begin{gathered}
\varphi\colon (G/S) \times_{k} X^S \to X \times_{Y}X^S, \quad
(\widetilde g,x)\mapsto(g\cdot x,x),
\end{gathered}
\end{gather*}
where $\widetilde{g}=\pi^{\ }_{G, S}(g)$ {\rm(}see Example
{\rm\ref{G/H})}, is an isomorphism;

\item[\rm(iii)]
    the morphism
    \begin{equation}\label{pp}
    {\rm pr_1}\circ \varphi \colon (G/S) \times_{k} X^S
\to X, \quad (\widetilde g, x)\mapsto g\cdot x,\end{equation} is an
$H$-torsor.
\end{enumerate}
\end{prop}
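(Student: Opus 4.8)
The plan is to reduce all three assertions to the trivial fibration by faithfully flat descent, and then to settle them by one explicit computation. First I would fix a surjective \'etale morphism $\mu\colon Y'\to Y$ trivializing $\pi$, so that by Definition~\ref{fib} there is a $G$-equivariant $Y'$-isomorphism $X\times_Y Y'\cong (G/S)\times_k Y'$. The crucial feature of the situation is that \emph{every} object occurring in the statement is compatible with base change: the isomorphism \eqref{isoS} gives $X^S\times_Y Y'\cong (X\times_Y Y')^S$, and the formation of $\pi^S$, of $\varphi$, and of ${\rm pr}_1\circ\varphi$ is likewise stable under $\mu$, being given by functorial formulas. Since $\mu$ is faithfully flat, both the property of being an $H$-torsor and the property of being an isomorphism descend along $\mu$; hence it suffices to prove (i) and (ii) after applying $-\times_Y Y'$, that is, for the trivial fibration $X=(G/S)\times_k Y$.

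Second, I would treat this trivial case head-on. Combining \eqref{isoS} with the basic isomorphism $H\xrightarrow{\cong}(G/S)^S$ of \eqref{H} yields $X^S=\big((G/S)\times_k Y\big)^S\cong (G/S)^S\times_k Y\cong H\times_k Y$, and under this identification $\pi^S$ becomes the projection $H\times_k Y\to Y$, i.e.\ the trivial $H$-torsor; this proves (i), the final clause (that $\mu$ trivializes $\pi^S$) being read off the same computation. For (ii), the identification $X^S\cong H\times_k Y$ turns $\varphi$ into the self-map of $(G/S)\times_k H\times_k Y$ sending $(\widetilde g,h,y)$ to $(\widetilde g\, h,h,y)$, where $\widetilde g\, h$ is the left translate of $h\in H\subset G/S$ by $g$. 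This ``shearing'' map is an isomorphism, with inverse $(v,h,y)\mapsto (v\,h^{-1},h,y)$, where $h^{-1}$ acts by right translation on $G/S$. The $(G\times_k H)$-equivariance of $\varphi$ and the fact that it is an $X^S$-map are visible directly from the defining formula $(\widetilde g,x)\mapsto(g\cdot x,x)$.

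Finally, I would deduce (iii) formally from (i) and (ii), with no further computation. The morphism ${\rm pr}_1\colon X\times_Y X^S\to X$ is exactly the base change of $\pi^S\colon X^S\to Y$ along $\pi\colon X\to Y$; hence by (i) and the stability of torsors under base change it is an $H$-torsor over $X$, where $H$ acts through the second factor $X^S$. Composing with the $H$-equivariant isomorphism $\varphi$ of (ii) then shows that the map ${\rm pr}_1\circ\varphi\colon (G/S)\times_k X^S\to X$ of \eqref{pp} is an $H$-torsor for the diagonal $H$-action (right translation on $G/S$, the given action on $X^S$).

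I expect the main obstacle to lie in the bookkeeping of the descent step rather than in any single hard idea: one must check that the isomorphisms furnished by \eqref{isoS} and \eqref{H} are compatible with the relevant $G$- and $H$-actions, so that ``trivial $H$-torsor'' genuinely descends to ``$H$-torsor'' and so that the base change of $\varphi$ really is the shearing map computed above. One should also confirm that $X^S$ is a $k$-variety and that $\pi^S$ is faithfully flat (both again by descent from the trivial model), so that the torsor formalism applies; these verifications are routine but need to be recorded.
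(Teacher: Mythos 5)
Your proposal is correct and takes essentially the same route as the paper's proof: trivialize $\pi$ by a surjective \'etale base change $\mu$, use \eqref{isoS} and \eqref{H} to identify $X^S$ locally with $H\times_k Y$, prove (ii) by exhibiting the inverse of the shearing map after descending the isomorphism property along the faithfully flat $\mu$ (the paper cites EGA IV, Prop.\ 2.7.1(viii) for exactly this), and deduce (iii) formally from (i), the stability of torsors under base change, and the isomorphism $\varphi$. The only cosmetic difference is that in (i) no descent theorem is actually needed: since in this paper a torsor is by definition a fibration that becomes trivial after a surjective \'etale base change, your computation that $\mu$ trivializes $\pi^S$ already establishes (i) directly.
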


\begin{proof} (i) Let $\pi$ be trivialized by a (surjective \'etale) base change $\mu\colon Y'\to Y,$ i.e.,
there is a $G$-equivariant $Y'$-isomorphism $(G/S)\times_k Y'\to
X\times_Y Y'$. By \eqref{isoS} and \eqref{H} we then have the
$H$-equivariant $Y'$-isomorphisms $H\times_k Y'=(G/S)^S\times_k
Y'\!\to\! (X\times_Y Y')^S\!\to\! X^S\times_Y Y'$. Hence, $\pi^S$ is
an $H$-torsor which is trivialized by $\mu$. This proves~(i).

(ii) The morphism $\varphi$ is a $Y$-map with respect to the
compositions of the second projections with $\pi^S$. By
\cite[Vol.\,24, Prop.\,2.7.1(viii)]{EGA4} it is enough to prove the
claim for the morphism of varieties obtained by the base change
$\mu$ considered in the above proof of (i). By virtue of (i)  and
\eqref{N/S} this
 reduces the problem
to proving that the map  $$(G/S)\times_{k} (N/S) \to
(G/S)\times_{k}(N/S), \quad (\widetilde g, \widetilde n)\mapsto
(\widetilde{gn}, \widetilde n),$$ is an isomorphism. But this is
clear since $(\widetilde g, \widetilde n)\mapsto
(\widetilde{gn^{-1}}, \widetilde{n})$ is the inverse map. This
proves (ii).

(iii) By (i) and Proposition \ref{properties}(ii) the morphism
$X\times_Y X^S\to X$ obtained from $\pi^S$ by the base change $\pi$
is an $H$-torsor. Since
 $\varphi$ is an isomorphism, this proves~(iii).
\end{proof}

Let $C$ be an algebraic $k$-group. Consider a $C$-torsor
$$
\alpha\colon P\to Y
$$
over a $k$-variety $Y$. Let $F$ be a $k$-variety endowed with an
action of $C$. If every finite subset of $F$ is contained in an open
affine subset of $F$ (for instance, if $F$ is quasi-projective),
then for the natural action of $\,C$ on $F\times_k P$ determined by
the $C$-actions on $F$ and $P$, the geometric quotient exists; it is
usually denoted by
$$F\times^C\!P.$$
Moreover, the quotient map $F\times P \to F\times^C  P$ is actually
a $C$-torsor over $F\times^C  P$ (see
  \cite[Prop.\;2.12]{florence} and use Proposition \ref{properties}~(iv) above;
see also \cite[Sect.\;4.8]{popov-vinberg}).

Since the composition of morphisms $F\times_{k}P \xrightarrow{{\rm
pr}_2} P \xrightarrow{\alpha} Y$ is constant on $C$-orbits, by the
universal mapping property of geometric quotients this composition
factors through a $k$-morphism
$$\alpha^{}_{F}\colon F\times^CP \to Y.$$

Let $\mu\colon Y'\to Y$ be a surjective \'etale $k$-morphism such
that $\alpha$ becomes the trivial morphism ${\rm pr}_2\colon
C\times_k Y'\to Y'$ after the base change $\mu$. Then, after the
same base change, $\alpha^{}_F$ becomes  the morphism $$F\times^C
(C\times_k Y')=F\times_k Y'\xrightarrow{{\rm pr}_2} Y'.$$ Hence
$\alpha^{}_{F}$ is a fibration over $Y$ with fibre $F$.

Since the variety $G/S$ is quasi-projective (see
\cite[Theorem\;6.8]{borel}), this construction is applicable for
$C=H$ and $F=G/S$.

Given a $k$-variety $Y$,
 we now have two
constructions:
\begin{enumerate}
\item[$\cdot$] if
$\pi\colon X \to Y$ is a $(G,S)$-fibration, then $\pi^S\colon X^S
\to Y$ is an $H$-torsor;

\item[$\cdot$] if
$\alpha\colon P \to Y$ is an $H$-torsor,
 then
$\alpha_{G/S}\colon (G/S)\times^HP \to Y$ is a $(G,S)$-fibration.
\end{enumerate}

\begin{prop}\label{propGSfibration}
These two constructions are inverse to each other and they are
functorial in~$Y$.
\end{prop}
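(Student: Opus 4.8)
The plan is to prove Proposition~\ref{propGSfibration} by verifying directly that the two constructions $\pi \mapsto \pi^S$ and $\alpha \mapsto \alpha_{G/S}$ are mutually inverse, and then checking functoriality. The natural order is: first establish one composite, say starting from a $(G,S)$-fibration $\pi\colon X\to Y$; then the other composite starting from an $H$-torsor $\alpha\colon P\to Y$; and finally address functoriality. Since both constructions are compatible with \'etale base change (by Proposition~\ref{GSfibration0}(i) and the base-change behaviour of the mixing construction $F\times^C(-)$ established just above the statement), the key simplification is that \emph{all three assertions can be checked after a surjective \'etale base change that trivializes the torsor or fibration in question}. This reduces everything to the split case, where the objects are explicit products.

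Concretely, for the first composite I would start with a $(G,S)$-fibration $\pi\colon X\to Y$. By Proposition~\ref{GSfibration0}(i), $\pi^S\colon X^S\to Y$ is an $H$-torsor. Applying the second construction yields $(G/S)\times^H X^S\to Y$. The isomorphism $\varphi$ of Proposition~\ref{GSfibration0}(ii) is exactly the ingredient needed: it identifies $(G/S)\times_k X^S$ with $X\times_Y X^S$ as $(G\times_k H)$-equivariant $X^S$-schemes, and passing to the $H$-quotient on both sides (using that the $H$-torsor $X\times_Y X^S\to X$ of Proposition~\ref{GSfibration0}(iii) realizes $X$ as the quotient) gives a $G$-equivariant $Y$-isomorphism $(G/S)\times^H X^S \overset{\cong}{\to} X$ compatible with the projections to $Y$. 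For the reverse composite, I would start with an $H$-torsor $\alpha\colon P\to Y$, form the $(G,S)$-fibration $(G/S)\times^H P\to Y$, and compute its $S$-fixed-point subscheme: using the isomorphism \eqref{H}, namely $H\xrightarrow{\cong}(G/S)^S$, together with the compatibility \eqref{isoS} of fixed points with base change, one identifies $\bigl((G/S)\times^H P\bigr)^S$ with $(G/S)^S\times^H P = H\times^H P = P$, recovering the original $H$-torsor. Both identifications are canonical, so no further bookkeeping of which isomorphism to choose is needed.

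The main obstacle I expect is the bookkeeping around quotients and fixed points in the mixing construction $F\times^H P$: one must be careful that forming $S$-fixed points commutes with the $H$-quotient. The clean way around this is precisely the reduction to the trivialized situation. After a surjective \'etale $\mu\colon Y'\to Y$ trivializing $\alpha$, one has $P\times_Y Y' \cong H\times_k Y'$ and hence $(G/S)\times^H P$ base-changes to $(G/S)\times_k Y'$, whose $S$-fixed locus is $(G/S)^S\times_k Y' = H\times_k Y'$ by \eqref{H}; by \eqref{isoS} taking fixed points commutes with the base change $\mu$, and since the assertion to be proved is about an isomorphism of $Y$-schemes it suffices to check it after the faithfully flat (indeed \'etale surjective) base change $\mu$. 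The same device handles the first composite, using Proposition~\ref{GSfibration0}(i) to guarantee that a base change trivializing $\pi$ also trivializes $\pi^S$.

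Functoriality in $Y$ then follows formally. Given a morphism $Y_1\to Y_2$ and an object over $Y_2$, pulling back commutes with both constructions: for the fibration-to-torsor direction this is the isomorphism \eqref{isoS}, $X^S\times_{Y_2}Y_1\overset{\cong}{\to}(X\times_{Y_2}Y_1)^S$; for the torsor-to-fibration direction it is the standard compatibility of $(G/S)\times^H(-)$ with base change of the torsor $P$, already recorded in the paragraph preceding the statement. Chasing these two compatibilities through the definitions shows that the inverse bijections between $(G,S)$-fibrations over $Y$ and $H$-torsors over $Y$ are natural in $Y$, which completes the proof.
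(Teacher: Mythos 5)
Your proposal is correct and follows essentially the same route as the paper's proof: the first composite is handled via Proposition~\ref{GSfibration0}(ii)--(iii) and the uniqueness of geometric quotients, the second via the commuting $G$- and $H$-actions together with the isomorphisms \eqref{H} and $((G/S)\times^H\!P)^S\cong (G/S)^S\times^H\!P\cong P$, and functoriality is formal. Your added \'etale-descent scaffolding (checking the canonical maps after a trivializing base change, using \eqref{isoS}) merely makes explicit the step the paper treats as immediate, so it is a more detailed rendering of the same argument rather than a different one.
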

\begin{proof} Since by Proposition \ref{GSfibration0}(iii) morphism \eqref{pp} is an $H$-torsor, $X$ is the geometric quotient for the $H$-action on $(G/S)\times_k X^S$.
Hence, by the uniqueness of geometric quotient, there is a
$G$-equivariant isomorphism $(G/S)\times^HX^S \to X$.

Let $P\to Y$ be an $H$-torsor. The natural $H$-action on
$(G/S)\times_{k}P$ and the $G$-action
 via left translation of $G/S$ commute. From this and
\eqref{H} we deduce the isomorphisms
$$((G/S)\times^H\!P)^S
\xrightarrow{\cong}(G/S)^S\times^H\!P \xrightarrow{\cong}
H\times^H\!P \xrightarrow{\cong}P.$$

Functoriality in $Y$ is clear.
\end{proof}

\begin{remark} \label{rem.forms}
 The \'etale  \v{C}ech cohomology set $H^1(Y, H)$
 classifies $H$-torsors over $Y$. On the other hand,
 if ${\rm Aut}_G(G/S)$ is the algebraic $k$-group
 of $G$-equivariant automorphisms of $G/S$,
 then according to the general principle outlined at the
 beginning of~\cite[Section 3]{serre-gc} (see also the references there for
 a more rigorous treatment),
 the \'etale \v{C}ech cohomology set $H^1(Y, {\rm Aut}_G(G/S))$
 classifies $(G/S)$-fibrations over $Y$.
 Since the  $G$-action on $G/S$ by left translation commutes
 with the $H$-action  by right multiplication, we have
 an injection $H\hookrightarrow{\rm Aut}_G(G/S)$. It is well
 known (and easy to prove) that, in fact, $H={\rm Aut}_G(G/S)$.
  We thus get a bijection  between
 $H^1(Y,H)$ and $H^1(Y, {\rm Aut}_G(G/S))$, i.e., between $H$-torsors and
 $(G, S)$-fibrations over $Y$.
 Proposition \ref{propGSfibration} is an explicit version of this fact.
\end{remark}

\subsection{\boldmath $(G,S)$-varieties}
\label{sect.(G,S)-variety}
\begin{defn}\label{defGSvariety1}
Let  $S$ be a closed $k$-subgroup of $G$ and let $X$ be a
$k$-variety endowed with a $G$-action. We shall say that $X$ is a
{\it $(G,S)$-variety} if $X$ contains a dense open  $G$-stable
subset $U$ which admits the structure of a $(G,S)$-fibration $U \to
Y$. Generalizing a terminology introduced in \cite{berhuy-favi}, it
is convenient to call such an open subset $U$  a {\it friendly open
subset} of $X$ for the action of~$G$.
\end{defn}

If $X$ is geometrically integral and $U$ is a friendly  open subset
of $X$ with $(G,S)$-fibration $\pi\colon U \to Y$, then $\pi$
induces an iso\-morphism $\pi^*\colon k(Y) \oii k(X)^G$.

The following statement over an algebraically closed field has
previously appeared in various guises in the literature (see
\cite[2.7]{popov-vinberg} and \cite[1.7.5]{popov4}).

\begin{thm}\label{generalGS}
Let $X$ be a geometrically integral $k$-variety endowed with a
$G$-action. Let $S$ be a closed $k$-subgroup of $\;G$. Then the
following properties are equi\-valent:
\begin{enumerate}
\item[\rm (a)] $X$ is a $(G, S)$-variety;
\item[\rm (b)] $X$ contains a dense
open $G$-stable subset such that the $G_{\overline k}$-stabilizer of
each of its $\overline k$-po\-ints is conjugate to $S_{\overline
k}$.
\end{enumerate}
\end{thm}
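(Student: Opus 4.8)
The plan is to prove the two implications separately; $(a)\Rightarrow(b)$ is immediate and $(b)\Rightarrow(a)$ carries all the weight. For $(a)\Rightarrow(b)$, if $X$ is a $(G,S)$-variety with friendly open subset $U$ and $(G,S)$-fibration $\pi\colon U\to Y$, then Proposition~\ref{properties}(iii) says that the $G_{\kbar}$-stabilizers of the points of $U(\kbar)$ are conjugate to $S_{\kbar}$, so $U$ is the required dense open $G$-stable subset. For the converse, let $U_0$ be the dense open $G$-stable subset furnished by (b). I would first apply Rosenlicht's theorem (\cite{rosenlicht1}, \cite{rosenlicht2}) to produce a dense open $G$-stable $U\subseteq U_0$ together with a smooth geometric quotient $\alpha\colon U\to Y$; shrinking $Y$ to its smooth locus (dense open, since ${\rm char}(k)=0$) and replacing $U$ by $\alpha^{-1}(Y)$, I may assume $U$ and $Y$ are smooth and geometrically integral. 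Writing $N$ for the normalizer of $S$ in $G$ and $H=N/S$ as in \eqref{N/S}, I then consider the fixed-point subvariety $U^S$, on which $H$ acts, together with $\alpha^S=\alpha|_{U^S}\colon U^S\to Y$. The entire problem reduces to showing that, after a further shrinking of $Y$, the morphism $\alpha^S$ is an $H$-torsor.

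The first step toward this is to compute the geometric fibres of $\alpha^S$. Over $\kbar$ a fibre of $\alpha$ is a single orbit $O_y$, and by hypothesis (b) the stabilizer of any point of $O_y$ is conjugate to $S_{\kbar}$, so $O_y$ is $G_{\kbar}$-isomorphic to $G_{\kbar}/S_{\kbar}$. By the base-change property \eqref{isoS} of fixed points, the corresponding fibre of $\alpha^S$ is $O_y^{\,S}\cong(G/S)^S$, which by \eqref{H} is isomorphic to $H$. Thus every geometric fibre of $\alpha^S$ is smooth of dimension $\dim H$; hence, by generic flatness together with one more shrinking of $Y$, the morphism $\alpha^S$ becomes flat, and being flat with smooth equidimensional fibres it is smooth and surjective. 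Since $Y$ is smooth, $U^S$ is smooth over $k$ as well.

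To upgrade $\alpha^S$ to a torsor I would examine the action map $a\colon H\times_k U^S\to U^S\times_Y U^S$, $(h,x)\mapsto(hx,x)$. Viewed via the projections onto the second $U^S$-factor, both sides are smooth over $U^S$, and on each geometric fibre $a$ is the orbit map $H\to O_y^{\,S}$, which is an isomorphism because, under \eqref{H}, $H$ acts simply transitively on $(G/S)^S$. By the fibrewise isomorphism criterion for morphisms of smooth schemes (\cite{EGA4}), $a$ is an isomorphism; together with the faithful flatness of $\alpha^S$ and the smoothness of $H$ this exhibits $\alpha^S\colon U^S\to Y$ as an (\'etale-locally trivial) $H$-torsor. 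The construction recalled before Proposition~\ref{propGSfibration} then produces a $(G,S)$-fibration $(G/S)\times^H U^S\to Y$, and I would show that the $G$-equivariant $Y$-morphism $\varphi\colon (G/S)\times^H U^S\to U$, $(\widetilde g,x)\mapsto g\cdot x$, patterned on \eqref{pp} and Proposition~\ref{GSfibration0}(ii), is an isomorphism: source and target are both smooth over $Y$ with geometric fibres $G/S$, and on each fibre $\varphi$ is the canonical isomorphism $G/S\xrightarrow{\cong}O_y$, so the same fibrewise criterion applies. Transporting the fibration structure along $\varphi$ shows that $\alpha\colon U\to Y$ is itself a $(G,S)$-fibration, whence $U$ is a friendly open subset of $X$ and $X$ is a $(G,S)$-variety.

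The main obstacle is the passage from the set-theoretic information in (b) to the geometric statement that $\alpha^S$ is an $H$-torsor. The two delicate points are securing flatness of $\alpha^S$ (handled by generic flatness, made effective by the a~priori identification of every geometric fibre with $H$ via \eqref{isoS} and \eqref{H}) and verifying that the action map $a$ is a genuine isomorphism of schemes rather than a mere fibrewise bijection; this is exactly where the simple transitivity coming from \eqref{H} and the fibrewise isomorphism criterion of \cite{EGA4} are needed.
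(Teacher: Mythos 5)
Your proposal is correct, but in the crucial implication (b)$\Rightarrow$(a) it takes a genuinely different route from the paper. The paper, after the same Rosenlicht/smoothing reductions, never proves directly that $\pi^S$ is an $H$-torsor: it passes to the \emph{reduced} variety $(X^S)_{\rm red}$, extracts an open subset $U\subset (X^S)_{\rm red}$ on which $\pi|_U\colon U\to Y$ is smooth and surjective, and then shows that the single map $\psi\colon (G/S)\times_k U\to X\times_Y U$, $(\widetilde g,u)\mapsto (gu,u)$, is bijective on $\kbar$-points, hence an isomorphism by Zariski's main theorem in characteristic zero; Remark~\ref{S} (smooth trivializations suffice) then finishes the proof, with no flatness or torsor considerations at all. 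You instead first establish the $H$-torsor property of $\alpha^S\colon U^S\to Y$ by computing the scheme-theoretic fibres via \eqref{isoS} and \eqref{H}, invoking generic flatness, and applying the fibrewise isomorphism criterion of EGA~IV to the action map $a$, and then reassemble the $(G,S)$-fibration through the contracted-product correspondence of Proposition~\ref{propGSfibration}. Both arguments are sound. Yours is heavier (flatness, two applications of the fibrewise criterion) but more structural: it proves Proposition~\ref{GSfibration0}(i) en route rather than deducing it afterwards, and the identification of every scheme-theoretic fibre of $\alpha^S$ with the smooth group $H$ shows that $U^S$ is automatically reduced and smooth over the shrunk base, so the paper's detour through $(X^S)_{\rm red}$ becomes unnecessary on your route. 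The paper's argument buys brevity by exploiting characteristic zero twice (smoothness of the Rosenlicht quotient, ZMT for bijective morphisms of smooth varieties). Two small points you leave implicit and should record: the scheme-theoretic fibres of $\alpha$ are the reduced orbits precisely because $\alpha$ is smooth (this is supplied by the form of Rosenlicht's theorem quoted in the paper), and each orbit $O_y$ contains a point with stabilizer \emph{exactly} $S_{\kbar}$ (the stabilizers sweep out the full conjugacy class, and an inclusion $S_{\kbar}\subseteq gS_{\kbar}g^{-1}$ between conjugate subgroups forces equality by comparing dimensions and component groups); both are needed for your identifications $O_y\cong G_{\kbar}/S_{\kbar}$ and $(O_y)^S\cong H$.
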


\begin{proof}   That (a) implies (b) is clear. Let us assume (b).
One may replace $X$ by dense $G$-stable open subsets to successively
ensure that:

\smallskip

\hskip 1.8mm(i) The $G_{\overline k}$-stabilizer of every $\overline
k$-point of $X$ is conjugate to $S_{\overline k}$.

\hskip 1.1mm(ii) The $k$-variety $X$ is smooth over $k$.

(iii) (Rosenlicht's theorem, see Section \ref{sect.quotients}.)
There exist a geometrically integral $k$-variety $Y$ and a
$k$-morphism $\pi\colon X \to Y$ such that the pair $(Y,\pi)$ is the
geometric quotient of $X$ for the action of $G$.

\hskip .8mm(iv) The morphism $\pi$ is smooth. Indeed the generic
fibre of $\pi$ is regular, hence smooth since ${\rm char}(k)=0$. The
statement on $\pi$ can thus be achieved by replacing $Y$ by an open
set and $X$ by the inverse image of this open set.

 \hskip 1.9mm(v)
 There exists an open set $U$ of the reduced
variety
 $(X^S)_{\rm red}  \subset X^S $ such that the compositi\-on of maps $U
\hookrightarrow X^S \overset{\pi^S}\to Y$ is smooth.
 This follows from the surjectivity of the map $\pi^S\colon X^S \to Y$ on $\overline k$-points,
 itself a consequence of (i) and (iii).

 (vi) If we let $U \subset (X^S)_{\rm red}$ be the maximal open set such that the map $\pi|_U\colon U \to Y$
 is smooth, then this map
 is surjective. This is achieved by replacing $Y$
 in the previous statement by a dense open set contained in the image of
 the previous $U$, and replacing $X$ by the inverse image of this open set.

\smallskip

We now consider the following $G$-equivariant $k$-morphism of smooth
$k$-varieties:
$$\psi \colon (G/S) \times_{k} U \to X
\times_{Y}U, \quad (\widetilde g,u) \mapsto (gu,u)$$ (see the
notation in Proposition \ref{GSfibration0}(ii)). It is a
$U$-morphism with respect to the second projections. Since every
$G$-equivariant morphism $G/S\to G/S$ is bijective and $(Y, \pi)$ is
the geometric quotient, we deduce from (i) and (iii) that $\psi$ is
bijective on $\overline k$-points.
 As ${\rm char}(k)=0$, we then conclude
by Zariski's main theorem that $\psi$ is an isomorphism. Thus it is
proved that the morphism obtained from $\pi$ by the base change
$\pi|_U$ is trivial over $U$ with fibre $G/S$. Since $\pi|_U$ is
smooth, we now deduce from Remark \ref{S} that $\pi$ is a $(G,
S)$-fibration.
\end{proof}

Condition (b) of Theorem \ref{generalGS} gained much attention in
the literature (see \cite[\S7]{popov-vinberg} and references
therein). If (b) holds, one says that, for the action of $G$ on $X$,
there exists a {\it stabilizer $S$ in general position} or that
there exists a {\it principal orbit type} for $(G,X)$.  There are
actions, even of reductive groups, for which  a stabilizer in
general position does not exist (see \cite[7.1,
2.7]{popov-vinberg}). There are results ensuring its existence under
certain conditions or, equivalently (by Theorem \ref{generalGS}),
the existence of a structure of $(G, S)$-variety. Theorem
\ref{richardson1}  below is such a result.

Recall the following definition introduced in \cite{popov1}.

 \begin{defn}  The action of an algebraic $k$-group
$G$ on a $k$-variety  $X$ is called {\it stable} if there exists a
dense open subset $\;U$ of $X$ such that the $G$-orbit of every
point of $U(\overline k)$ is closed in $X_{\overline k}$.
 \end{defn}

\begin{thm} \label{richardson1}
Let $X$ be an affine geometrically integral $k$-variety with an
action of a reductive $k$-group $G$ such that $X(k)$ is Zariski
dense in $X$. Assume that either of the following conditions hold:
\begin{enumerate}
\item[\rm (i)] $X$ is smooth; or \item[\rm (ii)] the
$G$-action on $X$ is stable.
\end{enumerate}
Then there is a closed $k$-subgroup $S$ of $\,G$ such that $X$ is a
$(G, S)$-variety. In case {\rm (ii)} this sub\-gro\-up $S$ is
reductive.
\end{thm}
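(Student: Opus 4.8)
The plan is to reduce everything to the geometric criterion of Theorem~\ref{generalGS}. By that theorem it suffices to produce a closed $k$-subgroup $S\subseteq G$ together with a dense open $G$-stable subset $U\subseteq X$, defined over $k$, such that the $G_{\kbar}$-stabilizer of every $\kbar$-point of $U$ is conjugate to $S_{\kbar}$. The problem then splits into a purely geometric part, namely the existence of a stabilizer in general position over $\kbar$ (an abstract conjugacy class of subgroups of $G_{\kbar}$ realized on a dense open set), and a descent part, in which one makes both the open set and the subgroup rational. The hypothesis that $X(k)$ is Zariski dense will be used precisely in the descent: once the principal-orbit locus is shown to descend to a dense open $k$-subvariety $U$, density of $X(k)$ forces $U(k)\neq\emptyset$, and a rational point $x_0\in U(k)$ gives a closed $k$-subgroup $S:=\Stab_G(x_0)$ with $S_{\kbar}=\Stab_{G_{\kbar}}(x_0)$ in the desired conjugacy class.

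For the geometric part I would pass to $X_{\kbar}$ and establish the principal orbit type (Richardson's theorem). The mechanism is Luna's \'etale slice theorem: at a $\kbar$-point $x$ whose orbit is closed, Matsushima's criterion shows that $G_x$ is reductive, and the slice theorem identifies $X_{\kbar}$, \'etale-locally over a saturated neighborhood of $Gx$, with $G\times^{G_x}N$, where $N$ is the slice representation of the reductive group $G_x$ on the normal space to the orbit. Since such a neighborhood is open it meets the dense principal-orbit locus, so the stabilizer in general position for $(G,X)$ coincides with that for the linear action $(G_x,N)$. As $\dim N=\dim X-\dim(Gx)$, this reduces both cases to the case of a linear representation of a reductive group (the base of the induction being a trivial action, where $S=G$), and for representations the existence of a stabilizer in general position is Richardson's result. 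In the stable case (ii) one chooses $x$, and then $x_0$, in the locus of closed orbits, which is dense by stability; Matsushima's criterion then shows that the resulting $S$ is reductive, giving the last assertion.

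For the descent part I would observe that all of the above is $\Gal(\kbar/k)$-equivariant. The function $x\mapsto\dim G_x$ is upper semicontinuous and Galois-invariant, so the locus of minimal stabilizer dimension is open, $G$-stable, and defined over $k$; intersecting with the principal conjugacy locus (and, in case (ii), with the closed-orbit locus) yields a Galois-stable dense open set, because the principal conjugacy class is the unique generic one and hence is Galois-fixed. This locus therefore descends to a dense open $G$-stable $k$-subvariety $U\subseteq X$. Since $X(k)$ is dense in $X$ and $U$ is open and nonempty, $X(k)\cap U\neq\emptyset$, so $U(k)\neq\emptyset$; choosing $x_0\in U(k)$ produces the closed $k$-subgroup $S=\Stab_G(x_0)$. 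Every $\kbar$-point of $U$ has stabilizer conjugate to that of $x_0$, i.e.\ to $S_{\kbar}$, so Theorem~\ref{generalGS}(b) gives that $X$ is a $(G,S)$-variety, with $S$ reductive in case (ii).

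The main obstacle is the geometric existence of a stabilizer in general position, i.e.\ Richardson's principal-orbit-type theorem, and in particular the base case of a linear representation of a reductive group together with the verification that the slice reduction is well founded (in the smooth, non-stable situation one must control what happens at fixed points, where the slice representation lives on a space of the same dimension, so that the reduction lands in the genuinely linear case rather than looping). By contrast, the descent step is routine once one notes that the principal-orbit locus is intrinsically characterized, hence Galois-stable, and that density of $X(k)$ guarantees a rational point inside it.
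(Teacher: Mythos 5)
Your proposal follows the paper's own proof essentially step for step: existence of a stabilizer in general position over $\kbar$ (Richardson's principal-orbit-type theorem in the smooth case, its analogue for stable actions in case (ii)), density of $X(k)$ to pick a rational point $x_0$ whose stabilizer furnishes the closed $k$-subgroup $S$, Theorem~\ref{generalGS}(b) to conclude that $X$ is a $(G,S)$-variety, and Matsushima's criterion applied to the closed (hence affine) generic orbit to get reductivity of $S$ in case (ii). The only differences are ones of packaging rather than route: where you sketch the Luna-slice proof of the principal-orbit-type theorem and spell out the Galois descent of the principal locus, the paper simply cites \cite[Prop.\,5.3]{richardson-stab} and \cite[Sect.\;7.2, Cor.]{popov-vinberg} and leaves the (routine) descent implicit; note only that in case (ii), where $X$ may be singular, your slice reduction must choose the closed orbit inside the smooth locus (dense and $G$-stable by generic smoothness in characteristic zero), a point your sketch passes over but which the paper's citation covers directly.
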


\begin{proof} If (i) holds, then
by Richardson's theorem~\cite[Prop.\,5.3]{richardson-stab}
(cf.\;also \cite[Cor.\;8]{luna}, \cite[Theorem 7.2]{popov-vinberg})
there is a closed ${\overline k}$-subgroup
 $R$ of  $G_{\overline k}$ such that
the $G_{\overline k}$-stabilizer of a general $\overline k$-point of
$X$ is conjugate to $R$. Since $X(k)$ is Zariski dense, $R$ can be
taken as $S_{\overline k}$, where $S$ is the stabilizer of a
$k$-point of $X$. Then property (b) from the
 statement of Theorem \ref{generalGS} holds, hence
  $X$ is a
 $(G, S)$-variety.

If (ii) holds, then the above subgroup $S$ still exists by
\cite[Sect.\;7.2, Cor.]{popov-vinberg}, so the same argument
applies.  As the general orbit is closed, it is affine, whence $S$
is reductive by Matsushima's criterion (\cite{matsushima},
\cite{onishchik}, cf.\,\cite{bialynickibirula}, \cite{luna}).
\end{proof}

\subsection{Categorical quotients}\label{cat}
For the definition of a categorical quotient we refer the reader
to~\cite[Def.\;0.5]{mumford}, \cite[6.16, 8.19]{borel},  and
~\cite[Sect.\;4.3]{popov-vinberg}. In this paper we shall only work
with categorical quotients for reductive group actions on affine
varieties, which are constructed as follows.

Let $A$ be a finitely generated $k$-algebra. Assume a reductive
$k$-group $G$ acts  on the $k$-variety $X= \Spec(A)$ (over $k$).
Then (cf.\;\cite[Theorem~1.1, Cor.\,1.2]{mumford})
 \begin{enumerate}
 \item[(i)]
 the ring $A^G$ is a finitely generated $k$-algebra;
\item[(ii)] the inclusion $A^G \hookrightarrow A$
induces a categorical quotient map $\pi\colon X \to \Spec(A^G) =:
X\catq G$;
 \item[(iii)]
 every geometric fibre of $\pi$ contains a unique closed orbit.
\end{enumerate}
As $G$-orbits are open in their closure, the latter property implies
that every geometric fibre of $\pi$ containing a closed $G$-orbit of
maximal (in this fibre) dimension, coincides with this orbit.

\begin{prop}\label{lem.catq-fibration}
 Let $X$ be a geometrically integral affine
$k$-variety with an action of a reductive $k$-group $G$ such that
$X(k)$ is Zariski dense in $X$. Let $\pi\colon X\to X \catq G$ be a
categorical quotient. Then the following properties are equivalent:
\begin{enumerate}
 \item[\rm(a)] the action of $\;G$ on $X$ is stable;
     \item[\rm(b)] there exist  a reductive
 $k$-subgroup $\,S$ of $\;G$ and  a
dense open subset $\;Y$ of  $\;X \catq G$ such that the restriction
of $\;\pi$ to $\pi^{-1}(Y)$ is a $(G,S)$-fibration  $\pi^{-1}(Y) \to
Y$.
\end{enumerate}

The group $S$ in {\rm (b)} may be taken as the $G$-stabilizer of any
$k$-point of $\;\pi^{-1}(Y)$.

If {\rm(a), (b)} hold, then $\pi$ induces an isomorphism
$\pi^*\colon k(X\catq G)\overset{\cong}{\to} k(X)^G$.
\end{prop}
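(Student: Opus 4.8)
The plan is to prove the two implications separately and then read off the final isomorphism from the $(G,S)$-fibration furnished by (b). The direction (b)~$\Rightarrow$~(a) is the easy one: it rests on the fact that a $(G,S)$-fibration is, by Proposition~\ref{properties}(iv), a geometric quotient. The work lies in (a)~$\Rightarrow$~(b), where I must match the abstract geometric quotient produced by Richardson's theorem with the concrete categorical quotient $\pi$; the crux, and the step I expect to be the main obstacle, is showing that the comparison map between these two quotients is birational.

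For (a)~$\Rightarrow$~(b) I would first invoke Theorem~\ref{richardson1}(ii), whose hypotheses hold since $X$ is affine, geometrically integral, $X(k)$ is dense, and the action is stable; this produces a reductive $k$-subgroup $S$ of $G$ for which $X$ is a $(G,S)$-variety. By Definition~\ref{defGSvariety1} there is then a friendly dense open $G$-stable $U\subseteq X$ carrying a $(G,S)$-fibration $\pi_U\colon U\to Y'$, and by Proposition~\ref{properties}(iv) the pair $(Y',\pi_U)$ is the geometric quotient of $U$. As $\pi|_U$ is constant on $G$-orbits, its universal property yields a factorization $\pi|_U=\beta\circ\pi_U$ with $\beta\colon Y'\to X\catq G$ a dominant $k$-morphism. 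The key claim is that $\beta$ is birational. To prove this I would use stability together with the concluding remark of~\S\ref{cat}: over a dense open of $X\catq G$ every fibre of $\pi$ contains a closed orbit of dimension equal to the generic orbit dimension, and that remark then forces such a fibre to be a single $G$-orbit. Since $Y'$ already separates the orbits of $U$, two distinct points of $Y'$ cannot map to one point of $X\catq G$ over this locus, so $\beta$ is generically injective on $\kbar$-points, hence birational in characteristic zero.

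It remains to cut out a good open $Y$. Using that $\pi$ carries closed $G$-stable subsets to closed subsets (a standard property of categorical quotients by reductive groups, cf.~\cite{mumford}), the set $Y_1:=(X\catq G)\setminus\pi(X\setminus U)$ is a dense open over which $\pi^{-1}(Y_1)\subseteq U$. I would take $Y$ to be the intersection of $Y_1$ with the open locus on which $\beta$ restricts to an isomorphism $\beta^{-1}(Y)\xrightarrow{\cong}Y$. Then $\pi^{-1}(Y)=\pi_U^{-1}(\beta^{-1}(Y))$ lies in $U$, and $\pi|_{\pi^{-1}(Y)}$ is identified, through this isomorphism, with the restriction of $\pi_U$; by Proposition~\ref{properties}(ii) that restriction is a $(G,S)$-fibration, giving (b). That $S$ may be taken as the stabilizer of an arbitrary $x_0\in\pi^{-1}(Y)(k)$ follows from Proposition~\ref{properties}(iii), which makes $(\Stab_G x_0)_{\kbar}$ conjugate to $S_{\kbar}$, together with the remark after Proposition~\ref{properties} that a $(G,S)$-fibration is simultaneously a $(G,S')$-fibration whenever $S_{\kbar}$ and $S'_{\kbar}$ are conjugate; such $k$-points exist because $X(k)$ is dense.

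For (b)~$\Rightarrow$~(a) and the final assertion: given the $(G,S)$-fibration $\pi^{-1}(Y)\to Y$, Proposition~\ref{properties}(iv) shows $(Y,\pi)$ is the geometric quotient of the dense open $G$-stable set $\pi^{-1}(Y)$. Hence for $y\in Y(\kbar)$ the fibre $\pi^{-1}(y)$ is a single $G_{\kbar}$-orbit which, being the full $\pi$-fibre over the $\kbar$-point $y$, is closed in $X_{\kbar}$; taking $U=\pi^{-1}(Y)$ in the definition of a stable action gives (a). Finally, since $X$, and hence $X\catq G$, is geometrically integral, Remark~\ref{rem.separate} applied to $\pi^{-1}(Y)\to Y$ gives an isomorphism $k(Y)\overset{\cong}{\to}k(\pi^{-1}(Y))^G$; identifying $k(Y)=k(X\catq G)$ and $k(\pi^{-1}(Y))^G=k(X)^G$ via the dense open inclusions yields the asserted isomorphism $\pi^*\colon k(X\catq G)\overset{\cong}{\to}k(X)^G$. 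I single out the birationality of $\beta$ as the real obstacle, since it is precisely the point where stability must be converted into the generic agreement of the categorical and geometric quotients, and it is also what blocks a naive function-field argument, the equality $k(X\catq G)=k(X)^G$ being part of the conclusion rather than something available in advance.
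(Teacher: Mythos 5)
Your proposal is correct in substance and shares the paper's skeleton: both arguments obtain the reductive subgroup $S$ and a friendly open set from Theorem~\ref{richardson1}, both convert stability, via the remark closing \S\ref{cat}, into a dense open subset $W$ of $X\catq G$ over which every fibre of $\pi$ is a single closed orbit of maximal dimension, and the direction (b)$\,\Rightarrow\,$(a) together with the final isomorphism are handled identically (Proposition~\ref{properties}(iv) and Remark~\ref{rem.separate}). Where you genuinely diverge is the identification step. The paper chooses its open set $Y_1\subseteq X\catq G$ \emph{smooth}, so that the criterion recalled at the end of \S\ref{sect.quotients} (Borel, Prop.~6.6) shows directly that $\pi$ restricted to $\pi^{-1}(Y_1)$ is a geometric quotient; on the intersection $U$ of this set with the friendly open set there are then two geometric quotient maps, and uniqueness of geometric quotients produces the identifying isomorphism at once, with $\pi^{-1}(Y)=U$ for free, since a fibre over a point of $\pi(U)$ is an orbit through a point of the $G$-stable set $U$. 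Your route --- factoring $\pi|_U=\beta\circ\pi_U$ through the universal property and proving $\beta$ birational by generic injectivity over $W$ --- is a legitimate substitute that isolates the same pivot (stability forcing generic fibres of $\pi$ to be single orbits), at the cost of some extra bookkeeping that the paper's uniqueness argument avoids.

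That bookkeeping contains one assertion you leave unjustified and which is \emph{false} without stability: the density of $Y_1=(X\catq G)\setminus\pi(X\setminus U)$. For $\bbG_m$ scaling $\bbA^2$ with $U$ the complement of the origin, $\pi(X\setminus U)$ is all of $X\catq G$ (a point), so $Y_1$ is empty. You must show $Y_1\neq\emptyset$, and the fix uses exactly your locus $W$: the set $U\cap\pi^{-1}(W)$ is nonempty (two dense opens of the irreducible $X$), and for any $w\in\pi\bigl(U\cap\pi^{-1}(W)\bigr)$ the fibre $\pi^{-1}(w)$ is a single orbit through a point of the $G$-stable set $U$, hence lies in $U$, so $w\in Y_1$. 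With that sentence inserted the argument closes; your remaining steps are sound --- in particular, the existence of a dense open of the target over which the birational morphism $\beta$ restricts to an isomorphism on its full preimage is standard for morphisms of integral varieties, and your handling of the clause on stabilizers of $k$-points via Proposition~\ref{properties}(iii) and the remark following Definition~\ref{GS} is fine. In sum: a correct reorganization of the paper's proof rather than a different one, modulo the one fillable gap above.
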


\begin{proof}

Assume that (a) holds. By Theorem \ref{richardson1}, there exist a
reductive $k$-subgroup $S$ of $G$ and a $G$-invariant open subset
$U_1$ of $X$ such that $U_1$ admits the structure of a $(G,
S)$-fibration $\alpha \colon U_1 \to Z_1$.

On the other hand, by (a) there is an open subset $U_2$ of $X$ such
that the $G$-orbit of every point of $\;U_2(\overline k)$ is closed.
Since there is an open subset $U_{\rm max}$ of $X$ such that the
$G$-orbit of every point of $U_{\rm max}(\overline k)$ has maximal
(in $X$) dimension (cf.\;\cite[Chap.\,0, \S2]{mumford} or
\cite[Sect.\;1.4]{popov-vinberg}), we may replace $U_2$ by $U_2\cap
U_{\rm max}$ and assume in addition that this maximality property
holds for every point of $U_2(\overline k)$. The openness of $U_2$
in $X$ implies that  $\pi(U_2)$ contains a smooth open subset $Y_1$
of $X \catq G$. Put $U_3:=\pi^{-1}(Y_1)$. Then the fibre of  $\pi$
over every point of $Y_1(\overline k)$ contains a closed $G$-orbit
of maximal dimension. As we mentioned  right before the statement of
Proposition \ref{lem.catq-fibration}, this implies that this fibre
is a $G$-orbit. In turn, as we mentioned in Section
\ref{sect.quotients}, this implies that $\pi|_{U_3}\colon U_3\to
Y_1$ is the geometric quotient for the $G$-action on $U_3$.

Let $U = U_1 \cap U_3$. Then, since $\alpha$ and $\pi|_{U_3}$ are
open morphisms, $Z:=\alpha(U)$ and $Y:=\pi(U_3)$ are open subsets of
$Z_1$ and $Y_1$ respectively. The morphisms $\pi|^{\ }_U \colon U
\to Y$ and $\alpha|^{\ }_U \colon U \to Z$ are geometric quotient
maps for the $G$-action on $U$. By uniqueness of geometric
quotients, there is an isomorphism $\varphi\colon Z\to Y$ such that
the diagram
\[ \xymatrix@R=4mm@C=10mm{ & U \ar@{->}[dl]_{\alpha|^{\ }_U} \ar@{->}[dr]^{\pi|^{\ }_U} & \cr
Z \ar@{->}[rr]_{\varphi}^{\cong} & & Y} \] is commutative. Hence (b)
holds.

Conversely, if (b) holds, then fibres of $\pi $ over points of
$Y(\overline k)$ are $G$-orbits. Therefore these orbits are closed;
whence (a).

To prove the last assertion of the proposition, we may replace $X
\catq G$ by $Y$ and thus assume that $\pi$ is a $(G, S)$-fibration.
By Proposition~\ref{properties}(iv) $\pi$ is a geometric quotient
map; the desired conclusion now follows from
Remark~\ref{rem.separate}.
\end{proof}


\section{Versal actions}
\label{sect.versal}

We recall that $k$ is a field of characteristic zero. Let $\kbar$ be
an algebraic closure of $k$,  and let $G$ be a (not necessarily
connected) linear algebraic group over $k$.

\begin{defn} \label{def.versal}
Let $S$ be a closed $k$-subgroup of $G$. We say that a
$(G,S)$-fibration $\pi \colon V \to Y$ is {\em versal} if $Y$ is
geometrically integral and for every field extension $L/k$, every
$(G_{L},S_{L})$-fibration $\varrho \colon X \to \Spec(L)$, and every
dense open subset $Y_0$ of $Y$, there exists a Cartesian diagram of
the form\ \vskip -4mm
\begin{equation} \label{e.def3-1}
\begin{matrix}
\xymatrix@C=6mm{ X \ar@{->}[d]_{\varrho} \ar@{->}[rr] & & V
\ar@{->}[d]^{\pi} \cr \Spec(L) \ar@{->}[r] & Y_0 \ar@{^{(}->}[r] & Y
}
\end{matrix}\quad.
\end{equation}
\vskip 2mm\noindent In other words, there is an $L$-point of $Y_{0}$
and an $L$-isomorphism between $X$ and the fibre product $V
\times_{Y }\Spec(L) $.
\end{defn}

Note that if $S = \{ 1 \}$, i.e., $\pi$ is a $G$-torsor, this
definition coincides with the usual definition of a versal torsor;
see~\cite[Sect.\,I.5]{gms}, \cite{berhuy-favi}.

\begin{lem} \label{lem.versalG/S}
Let $N$ be the normalizer of $\;S$ in $G$ and let $H=N/S$. A $(G,
S)$-fibration $\pi \colon V \to Y$ over a smooth $Y$ is versal if
and only if the associated $H$-torsor $\pi^S:=\pi|_{V^S} \colon V^S
\to Y$ is versal.
\end{lem}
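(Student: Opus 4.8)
The plan is to exploit the bijective correspondence between $(G,S)$-fibrations and $H$-torsors established in Proposition~\ref{propGSfibration}, together with the compatibility of both constructions with base change. The key structural facts I would use are: (1) given a $(G,S)$-fibration $\pi\colon V\to Y$, the associated $H$-torsor is $\pi^S\colon V^S\to Y$, and conversely an $H$-torsor $\alpha\colon P\to Y$ gives back the $(G,S)$-fibration $\alpha_{G/S}\colon (G/S)\times^H P\to Y$; (2) by \eqref{isoS} the formation of $V^S$ commutes with arbitrary base change $Y'\to Y$, so $(V\times_Y Y')^S\cong V^S\times_Y Y'$; and (3) over a field $L$, Proposition~\ref{propGSfibration} specializes to a bijection between $(G_L,S_L)$-fibrations over $\Spec(L)$ and $H_L$-torsors over $\Spec(L)$.

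First I would translate Definition~\ref{def.versal} through this correspondence. Fix a field extension $L/k$ and a dense open $Y_0\subset Y$. Versality of $\pi$ asks that every $(G_L,S_L)$-fibration $\varrho\colon X\to\Spec(L)$ arise as $V\times_Y\Spec(L)$ via an $L$-point of $Y_0$; versality of $\pi^S$ (the usual notion for torsors, cf.~\cite[Sect.\,I.5]{gms}) asks that every $H_L$-torsor over $\Spec(L)$ arise as $V^S\times_Y\Spec(L)$ via an $L$-point of $Y_0$. By fact~(3) the source data of the two statements are in bijection: $X\mapsto X^{S_L}$ sends a $(G_L,S_L)$-fibration to an $H_L$-torsor and back. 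By fact~(2) applied to the Cartesian square with bottom map $\Spec(L)\to Y_0\hookrightarrow Y$, taking $S$-fixed points turns an isomorphism $X\cong V\times_Y\Spec(L)$ into an isomorphism $X^{S_L}\cong (V\times_Y\Spec(L))^S\cong V^S\times_Y\Spec(L)$, i.e.\ a Cartesian diagram for $\pi^S$ with the \emph{same} $L$-point of $Y_0$. Conversely, given a Cartesian diagram realizing $X^{S_L}=P$ from $\pi^S$ over an $L$-point of $Y_0$, I would apply the functorial inverse construction $P\mapsto (G/S)\times^H P$ of Proposition~\ref{propGSfibration}, which by its functoriality in $Y$ reconstructs $X$ as $V\times_Y\Spec(L)$ over the same $L$-point. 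This shows the existence of the diagram \eqref{e.def3-1} for $\pi$ is equivalent to the existence of the corresponding diagram for $\pi^S$, quantifier by quantifier, proving both directions of the ``if and only if''.

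The one hypothesis I must feed in is smoothness of $Y$, which enters precisely to guarantee that $V^S\to Y$ is an honest $H$-torsor over a geometrically integral smooth base, so that the standard theory of versal torsors applies to $\pi^S$ and that the inverse construction stays within the category of $(G,S)$-fibrations over $Y$; this is where Proposition~\ref{GSfibration0}(i) is invoked. The main obstacle I anticipate is the bookkeeping of base change: I must check that the two constructions of Proposition~\ref{propGSfibration}, which are stated as inverse and functorial in $Y$, interact correctly with the base change $\Spec(L)\to Y$ appearing in the definition of versality, so that the \emph{same} $L$-point of $Y_0$ serves in both Cartesian diagrams. Establishing this compatibility rests entirely on the isomorphism \eqref{isoS} and the functoriality clause of Proposition~\ref{propGSfibration}; once that is in hand the equivalence is formal, and neither direction requires more than chasing these two diagrams.
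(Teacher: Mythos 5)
Your proposal is correct and follows essentially the same route as the paper: the paper's proof likewise invokes the mutually inverse functorial correspondences of Proposition~\ref{propGSfibration} between $(G,S)$-fibrations and $H$-torsors over $\Spec(L)$ to convert the Cartesian diagram \eqref{e.def3-1} into the corresponding diagram for $\pi^S$ and back, with the base-change compatibility you verify via \eqref{isoS} left implicit. Your only slight misstatement is on the role of smoothness of $Y$: Proposition~\ref{GSfibration0}(i) makes $\pi^S$ an $H$-torsor without any smoothness hypothesis, so smoothness is not needed for that step, but this does not affect the validity of your argument.
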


\begin{proof} By Proposition~\ref{propGSfibration} there are
mutually inverse functorial correspondences between $(G,
S)$-fibrations and $H$-torsors over $\Spec(L)$ given by passing from
a $(G, S)$-fibration $\varrho\colon X \to \Spec(L)$ to the
$H$-torsor $\varrho^S\colon X^S\to \Spec(L)$ and from an $H$-torsor
$\alpha\colon Z\to \Spec(L)$ to the $(G, S)$-fibration $\alpha^{\
}_{G/S}\colon (G/S)\times^H Z\to  \Spec(L)$. This implies that a
Cartesian diagram~\eqref{e.def3-1} exists if and only if a Cartesian
diagram \ \vskip -4mm
\begin{equation*}
\begin{matrix}
\xymatrix@C=6mm{ X^S \ar@{->}[d]_{\varrho^S} \ar@{->}[rr] & & V^S
\ar@{->}[d]^{\pi^{S}} \cr \Spec(L) \ar@{->}[r] & Y_0 \ar@{^{(}->}[r]
& Y }
\end{matrix}\quad .
\end{equation*}
\vskip 2mm\noindent exists. This means that $\pi$ is versal if and
only if $\pi^S$ is versal.
\end{proof}

We say that a $(G, S)$-variety $X$ is {\em versal} if there is a
friendly open set $U$ of $X$ (see Definition~\ref{defGSvariety1})
such that the associated $(G, S)$-fibration $U \to Y$ is versal.

It is easy to see that if $X$ is a versal $(G, S)$-variety, then the
$(G, S)$-fibration $U' \to Y'$ is a  versal $(G, S)$-fibration for
{\em every} friendly open set $U'$ of $X$.

The following proposition plays an important r\^ole in our paper. We
are grateful to the referee for the present version of this
statement which strengthens our earlier result used in the proof of
Theorem \ref{thm1}. Recall that the notion of stabilizer in general
position utilized in the formulation of this proposition has been
defined in the previous section, just after the proof of
Theorem~\ref{generalGS}.

\begin{prop} \label{prop.versal0}
Suppose $G$ and $H$ are linear algebraic groups over $k$ and $G$
acts {\rm(}algebraically\,{\rm)} on $\,H$ by group automorphisms.
Assume further that
\begin{enumerate}
\item[\rm(i)] the group
 $\,H$ is connected and,
  for this action of $\,G$ on $H$, there exists a stabilizer $\,S$
in general position;

\item[\rm(ii)] the group $\,H^S$ is connected.

\end{enumerate}

\noindent Then $H$ is a versal $(G, S)$-variety.
\end{prop}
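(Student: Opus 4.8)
The plan is to reduce versality of the $(G,S)$-variety $H$ to versality of a single torsor under $Q:=N/S$, where $N$ is the normalizer of $S$ in $G$, and then to establish the latter by an equivariant linearization at the identity element of $H$. First, hypothesis (i) together with Theorem~\ref{generalGS} shows that $H$ is a $(G,S)$-variety, so I may fix a friendly open subset $U\subseteq H$ carrying a $(G,S)$-fibration $\pi\colon U\to Y$; shrinking $Y$, I arrange $Y$ to be smooth and geometrically integral. By Lemma~\ref{lem.versalG/S} it then suffices to prove that the associated $Q$-torsor $\pi^S\colon U^S\to Y$ is versal.

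Next I would analyze the fixed locus. Since $S$ acts on $H$ by group automorphisms, $H^S$ is a closed subgroup of $H$, connected (hence geometrically integral) by hypothesis (ii); its identity element $e$ is fixed by all of $G$, and $N$ normalizes $S$, so $Q=N/S$ acts on $H^S$ by group automorphisms fixing $e$. By the compatibility \eqref{isoS}, $U^S=U\cap H^S$ is a nonempty, hence dense, open subset of $H^S$ on which $Q$ acts freely, and $\pi^S$ is the corresponding quotient torsor.

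The heart of the argument is to show this torsor is versal. I would invoke the standard fact, which follows from the construction of versal torsors out of generically free representations (cf.~\cite[Sect.~I.5]{gms}), that a $Q$-torsor over an integral base is versal as soon as there exist a generically free linear representation $V$ of $Q$ and a dominant $Q$-equivariant rational map from $V$ to its total space: the free locus of $V$ carries a versal $Q$-torsor, and versality is inherited by any $Q$-torsor that it dominates equivariantly. To produce such a map to $H^S$, I linearize at the fixed point $e$. In characteristic zero, complete reducibility yields a $Q$-equivariant splitting of the surjection $\mathfrak m_e\to\mathfrak m_e/\mathfrak m_e^2=\Lie(H^S)^\vee$ of $Q$-modules, where $\mathfrak m_e$ is the maximal ideal of $e$ in $k[H^S]$; extending it to a $k$-algebra homomorphism gives a $Q$-equivariant morphism from $H^S$ to $\Lie(H^S)$ that is \'etale at $e$. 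Because $H^S$ is irreducible --- this is exactly where hypothesis (ii) is used, guaranteeing that the neighborhood of $e$ on which the map is defined is dense --- this morphism is birational, so its inverse is a dominant $Q$-equivariant rational map from $\Lie(H^S)$ to $H^S$. Precomposing with the projection $V:=\Lie(H^S)\oplus W\to\Lie(H^S)$, where $W$ is any generically free $Q$-representation (so that $V$ is generically free), produces the required map and shows $\pi^S$ is versal. By Lemma~\ref{lem.versalG/S}, $\pi$ is then a versal $(G,S)$-fibration, so $H$ is a versal $(G,S)$-variety.

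The step I expect to be the main obstacle is the equivariant linearization: producing the $Q$-equivariant splitting of $\mathfrak m_e\to\mathfrak m_e/\mathfrak m_e^2$ requires $Q=N/S$ to be linearly reductive, and one must also check that the resulting birational map is genuinely dominant on all of $H^S$ rather than concentrated away from a dense open set --- both points being controlled precisely by hypotheses (i) and (ii) (in the applications $S$ is a maximal torus, $Q$ its Weyl group, and $H^S$ a maximal torus, so these conditions hold). Verifying that versality transfers along the dominant equivariant map, while standard, is the other place where care is needed.
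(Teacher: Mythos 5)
Your opening reduction is exactly the paper's: use hypothesis (i) with Theorem~\ref{generalGS} to get a friendly open set, then Lemma~\ref{lem.versalG/S} to reduce versality of the $(G,S)$-fibration to versality of the $Q$-torsor $\pi^S\colon U^S\to Y$, $Q=N/S$, with hypothesis (ii) making $H^S$ a connected group. But the heart of your argument --- inverting an equivariant linearization at $e$ to get a dominant $Q$-equivariant rational map from a representation onto $H^S$ --- has a genuine gap, in two places. First, ``\'etale at $e$ plus $H^S$ irreducible'' does not imply birational: the morphism attached to a $Q$-equivariant splitting of $\mathfrak m_e\to\mathfrak m_e/\mathfrak m_e^2$ can have degree $>1$. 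Concretely, take $H^S=\bbG_m$ with $Q=\bbZ/2\bbZ$ acting by inversion: the splitting sending the class of $t-1$ to $(t-t^{-1})/2$ gives the equivariant morphism $t\mapsto (t-t^{-1})/2$ to $\Lie(\bbG_m)$, \'etale at $t=1$ but $2$-to-$1$; worse, \emph{every} equivariant morphism here is a Laurent polynomial $f$ with $f(t^{-1})=-f(t)$ and hence has even degree, so no choice of splitting can produce a birational map. Since your versality-inheritance step needs a map \emph{from} the versal representation \emph{to} $H^S$, and a merely dominant map in the opposite direction has no rational inverse, the construction collapses. (Asking for an equivariant birational map $H^S\dasharrow\Lie(H^S)$ is precisely the Cayley problem of~\cite{lpr}, which fails for many groups with their Weyl-group actions, so this step is false as a general claim, not just unjustified.) Second, the equivariant splitting itself requires $Q$ to be linearly reductive, and --- contrary to your closing remark --- hypotheses (i) and (ii) do not control this: the paper's own Corollary~\ref{cor.versal-c}(a) applies the proposition with $S=\{1\}$, so that $Q=G$ is an \emph{arbitrary} linear algebraic group, possibly with nontrivial unipotent radical, for which no such splitting need exist.

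The paper's proof performs your same reduction and then avoids linearization entirely. By Lemma~\ref{lem.twist}, the required equivariant map from a torsor $P\to\Spec(L)$ into a dense open $U_0$ exists if and only if the twist $(U_0)_L\times^{G_L}P$ has an $L$-point; by Lemma~\ref{lem.group} the twist $H_L\times^{G_L}P$ is a connected \emph{linear algebraic group} over $L$ --- this is exactly where the hypothesis that $G$ acts by group automorphisms and the connectedness assumptions (i), (ii) enter --- and Chevalley's theorem on unirationality of connected linear groups in characteristic zero then makes $L$-points dense, so every dense open twist has a point. That argument needs no reductivity of $Q$ and no equivariant dominant map from a representation, which is what allows the proposition to hold in its stated generality. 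In the special case actually needed for Theorem~\ref{thm1} ($Q$ a Weyl group, $H^S$ a torus $T$) your strategy could be repaired without linearization, by embedding the character lattice of $T$ into a permutation lattice and dualizing to get an equivariant surjection onto $T$ from a quasitrivial torus, which is a $Q$-stable dense open subset of a permutation representation; but that does not prove the proposition as stated.
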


Note that by Theorem~\ref{richardson1}(i) condition (i)
automatically holds if $G$ is reductive.

Our proof of Proposition~\ref{prop.versal0} will rely on the
following two lemmas.

\begin{lem} \label{lem.twist}
Let $C$ be an algebraic group over a field $K$, let $X$  be a
quasi-projective $K$-variety endowed with a $C$-action, and let $\pi
\colon P \to \Spec(K)$ be a $C$-torsor. Then the following
properties are equivalent:

\begin{enumerate}
\item[\rm(a)]
There exists a $C$-equivariant morphism $\alpha\colon P \to X$
defined over $K$.

\item[\rm(b)] $X
\times^ C \!P$ has a $K$-point.
\end{enumerate}
\end{lem}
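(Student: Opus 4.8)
The plan is to prove the equivalence of (a) and (b) by constructing each object from the other. This is the standard "twisting" dictionary for torsors, and the two directions are genuinely different in character, so I would treat them separately.

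For the direction (a) $\Rightarrow$ (b), suppose we are given a $C$-equivariant $K$-morphism $\alpha \colon P \to X$. The key idea is to use $\alpha$ to produce a $K$-point of the twisted variety $X \times^C P$. First I would form the graph-type morphism $\beta \colon P \to X \times_K P$, $p \mapsto (\alpha(p), p)$, which is $C$-equivariant when $C$ acts diagonally on $X \times_K P$ (using equivariance of $\alpha$). Composing with the quotient map $q \colon X \times_K P \to X \times^C P$ gives a $C$-invariant morphism $P \to X \times^C P$. Since $\alpha_X \colon X \times^C P \to \Spec(K)$ is, by the construction recalled just before this lemma, a fibration over $\Spec(K)$ with fibre $X$ and the map $q$ is a $C$-torsor, the composite factors through the geometric quotient of $P$ by $C$, which is $\Spec(K)$ itself. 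This yields a $K$-morphism $\Spec(K) \to X \times^C P$, i.e., a $K$-point.

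For the converse (b) $\Rightarrow$ (a), suppose $X \times^C P$ has a $K$-point $y \colon \Spec(K) \to X \times^C P$. The plan is to pull back the $C$-torsor $q \colon X \times_K P \to X \times^C P$ along $y$. Because $y$ is a $K$-point, the pullback $q^{-1}(y) \to \Spec(K)$ is a $C$-torsor over $\Spec(K)$; I claim it is isomorphic to $P$ itself. This follows from chasing the fibre: over the image of $y$ in $\Spec(K)$ (via $\alpha_X$) the torsor $X \times_K P \to X \times^C P$ restricts compatibly with the original torsor $\pi$, so the fibre over $y$ is a form of $C$ classified by the same cocycle as $P$, giving a $C$-isomorphism $P \cong q^{-1}(y)$. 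Composing this isomorphism with the first projection $q^{-1}(y) \hookrightarrow X \times_K P \xrightarrow{\mathrm{pr}_1} X$ produces the desired $C$-equivariant morphism $\alpha \colon P \to X$ over $K$.

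The main obstacle I expect is justifying that the relevant quotients and pullbacks behave well, which is exactly where the quasi-projectivity hypothesis on $X$ is needed: it guarantees (via the construction recalled before the lemma, citing \cite[Prop.\;2.12]{florence}) that the geometric quotient $X \times^C P$ exists and that $q \colon X \times_K P \to X \times^C P$ is genuinely a $C$-torsor. Without this, neither the factorization in direction (a) nor the torsor structure on the fibre in direction (b) is available. I would be careful to invoke the universal mapping property of the geometric quotient (from Section~\ref{sect.quotients}) for the factorization, and to use that a $C$-torsor over $\Spec(K)$ with a $K$-point is trivial, so that the identification $P \cong q^{-1}(y)$ is precisely a matching of the classifying cocycles in $H^1(K, C)$.
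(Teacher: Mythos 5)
Your proof is correct and follows essentially the same route as the paper's. In the direction (a) $\Rightarrow$ (b), your graph map $\beta\colon P \to X \times_K P$ is exactly the paper's $\alpha \times \id$, and both arguments conclude via the universal mapping property of the geometric quotient $P/C = \Spec(K)$. In the direction (b) $\Rightarrow$ (a), you likewise pull back the $C$-torsor $X \times_K P \to X \times^C\! P$ along the $K$-point and then project the resulting copy of $P$ to $X$; the one place you diverge is the identification of the pulled-back torsor with $P$. Where you assert that the fibre is ``classified by the same cocycle as $P$,'' the paper gives a cleaner, cohomology-free justification: the composite of the pulled-back torsor $E \to X \times_K P$ with $\pr_2$ is a morphism of $C$-torsors over $\Spec(K)$ (from $\varepsilon$ to $\pi$), and any morphism of torsors over a common base is automatically an isomorphism. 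Your cocycle claim is true (extract the cocycle from the second coordinate of a geometric point of the fibre, using that the diagonal $C$-action projects to the $C$-action on $P$), but as written it is asserted rather than proved, and it tacitly relies on describing torsors by Galois cocycles; both issues evaporate if you replace it with the projection argument just described. Finally, your closing remark that a $C$-torsor over $\Spec(K)$ with a $K$-point is trivial plays no role in identifying $P$ with $q^{-1}(y)$ --- neither torsor need be trivial there --- so it should be dropped rather than leaned on.
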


Recall that here $X \times^C \!P$ is the $K$-variety defined after
the proof of Proposition~\ref{GSfibration0}.

\begin{proof}[Proof of Lemma~{\rm\ref{lem.twist}}]
${\rm(a)}\Rightarrow\!{\rm(b)}$: By the universal mapping property
(see Subsection~\ref{sect.quotients}) the $C$-equivariant morphism
$\alpha \times \id \colon P \to X \times P$ determines
 a $K$-morphism of geometric quotients
$\Spec(K) = P/C \to ( X \times P)/C =X \times^ C \!P$, i.e., a
$K$-point of $X \times^ C \!P$.

\smallskip
${\rm(b)}\Rightarrow\!{\rm(a)}$: Given a $K$-point $\mu\colon
\Spec(K) \to X \times^C\! P$, let $\varepsilon\colon E\to \Spec(K)$
be the $C$-torsor obtained from the $C$-torsor $X\times P\to X
\times^C\! P$ by the base change $\mu$. By construction there is a
$C$-equivariant  morphism $E\to X\times P$ (see
Subsection~\ref{GSfibrations}). Its composition with the projection
$X\times P\to P$ is a morphism of $C$-torsors $\pi$ and
$\varepsilon$, hence an isomorphism. This yields a $C$-equivariant
morphism $P\to X\times P$. Its composition with the projection
$X\times P\to X$ is a $C$-equivariant morphism $P\to X$.
\end{proof}

\begin{lem} \label{lem.group}
Let $C$, $X$, and $P$ be as in Lemma~{\rm\ref{lem.twist}}. If $X$ is
an algebraic group and $C$ acts on $X$ by group automorphisms, then
$X\times^C\!P$ has a natural structure of an algebraic group defined
over $K$.
\end{lem}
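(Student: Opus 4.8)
The plan is to exhibit $X\times^C\!P$ as the value of a product-preserving functor applied to the group object $X$, so that the group structure is transported automatically. Write $(-)\times^C\!P$ for the operation sending a quasi-projective $C$-variety $F$ to the $K$-variety $F\times^C\!P$ defined after the proof of Proposition~\ref{GSfibration0}. If $\phi\colon F_1\to F_2$ is a $C$-equivariant $K$-morphism, then $\phi\times\id_P$ is $C$-equivariant and, by the universal mapping property of the geometric quotient (Subsection~\ref{sect.quotients}), descends to a $K$-morphism $\phi\times^C\!\id_P\colon F_1\times^C\!P\to F_2\times^C\!P$; thus $(-)\times^C\!P$ is a functor on the category of quasi-projective $C$-varieties. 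Since $X$ is quasi-projective and an algebraic group and $C$ acts by group automorphisms, $X$ is a group object of this category: its multiplication $m\colon X\times_K X\to X$, inversion $\iota\colon X\to X$, and unit $e\colon\Spec K\to X$ are all $C$-equivariant, because automorphisms preserve products, inverses, and the identity element. A functor that preserves finite products sends group objects to group objects, so it suffices to show that $(-)\times^C\!P$ preserves finite products.

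The key step is therefore the natural isomorphism
\begin{equation*}
(F_1\times_K F_2)\times^C\!P\;\oii\;(F_1\times^C\!P)\times_K(F_2\times^C\!P),
\end{equation*}
where $C$ acts diagonally on $F_1\times_K F_2$. First, $F_1\times_K F_2$ is again quasi-projective, so the left-hand side is defined. Writing $[f,p]$ for the image of $(f,p)$ under the quotient map, the morphism $F_1\times_K F_2\times_K P\to(F_1\times^C\!P)\times_K(F_2\times^C\!P)$, $(f_1,f_2,p)\mapsto([f_1,p],[f_2,p])$, is constant on the diagonal $C$-orbits, hence by the universal mapping property it induces the displayed $K$-morphism. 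To see that it is an isomorphism, choose a surjective \'etale morphism $Y'\to\Spec K$ trivializing the torsor $P$; as recorded after Proposition~\ref{GSfibration0}, after this base change each $F_i\times^C\!P$ becomes $F_i\times_K Y'$, so both sides become $(F_1\times_K F_2)\times_K Y'$ and the induced map is the identity. Since being an isomorphism is local for the \'etale topology on the base (faithfully flat descent), the original $K$-morphism is an isomorphism. The same argument gives $\Spec K\times^C\!P\cong P/C=\Spec K$, so the terminal object is preserved as well; hence $(-)\times^C\!P$ preserves finite products.

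Combining the two observations, the images $\mu:=m\times^C\!\id_P$, $\iota':=\iota\times^C\!\id_P$, and $e':=e\times^C\!\id_P$ endow $X\times^C\!P$ with multiplication, inversion, and unit morphisms over $K$, once one uses the product isomorphism to identify $(X\times_K X)\times^C\!P$ with $(X\times^C\!P)\times_K(X\times^C\!P)$ and $\Spec K\times^C\!P$ with $\Spec K$. The group axioms are commutative diagrams built from $m$, $\iota$, $e$ and the structural product isomorphisms; applying the product-preserving functor $(-)\times^C\!P$ to the corresponding diagrams valid for $X$ yields the required diagrams for $X\times^C\!P$, and every morphism involved is defined over $K$. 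I expect the main obstacle to be the second paragraph: one must check with care that the product-comparison map is a genuine morphism of $K$-schemes, not merely a bijection on $\overline K$-points, and that it is an isomorphism, which is why the argument routes through an \'etale trivialization of $P$ together with faithfully flat descent; one must also confirm throughout that all the contracted products in play exist, which holds because finite products and locally closed subvarieties of quasi-projective varieties remain quasi-projective.
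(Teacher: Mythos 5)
Your proof is correct and follows essentially the same route as the paper: both establish functoriality of $(-)\times^C\!P$ via the universal mapping property of the geometric quotient, prove the product-comparison map $(X_1\times_K X_2)\times^C\!P \to (X_1\times^C\!P)\times_K(X_2\times^C\!P)$ is an isomorphism by trivializing $P$ (the paper over a finite splitting field $K'$, you over an \'etale cover of $\Spec K$) and then descending via \cite[Vol.\,24, Prop.\,2.7.1(viii)]{EGA4}, and finally transport the group operations. Your explicit treatment of the unit through $\Spec K\times^C\!P\cong \Spec K$ merely fills in a detail the paper leaves implicit.
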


\begin{proof}[Proof of Lemma~{\rm\ref{lem.group}}]
For notational simplicity we shall write $^P \!X$ in place of $X
\times^C \!P$.

Let $X_1$ and $X_2$ be quasi-projective $K$-varieties
 endowed with
$C$-actions  and let $ \varphi\colon X_1\to X_2$ be a
$C$-equivariant morphism. By the universal mapping property the
$C$-equivariant morphism $ \varphi \times \id \colon X_1 \times P
\to X_2 \times P $ determines a morphism of geometric quotients $^P
\!X_1 \to {^P\!X}_2$ which we shall denote by $^P \!\!\varphi$.

For $i = 1, 2$, let $\pi_i \colon X_1 \times X_2 \to X_i$  be the
projection. We claim that the $K$-morphism
\begin{equation} \label{e.twist}
^P \!\!\pi_1 \times {^P \!\!\pi}_2 \colon ^P\!(X_1 \times X_2) \to
{^P\!X}_1 \times {^P\!X}_2
\end{equation}
is, in fact, an isomorphism. To prove this claim we pass to a finite
field extension $K'$ of $K$ such that $P$
 splits over $K'$,
next we observe that if $P$ splits, the claim is obvious. We
conclude that~\eqref{e.twist} is an isomorphism over $K'$ and hence
by \cite[Vol.\,24, Prop.\,2.7.1(viii)]{EGA4} over $K$.

Using this isomorphism one easily checks that the multiplication map
$X \times X \to X$ and the inverse map $X \to X$ give rise to group
operations on $^P \!X$.
\end{proof}

\begin{proof}[Proof of Proposition~{\rm\ref{prop.versal0}}]
Condition (i) and Theorem \ref{generalGS} yield that, for the
$G$-action on $H$, there is
 a friendly open subset $U$ of $H$. So a geometric factor
$U \to Y$  for the action of $G$ on $U$ exists and is a $(G,
S)$-fibration. The action of $N_G(S)/S$ on $H^S$ is generically free
and Proposition~\ref{GSfibration0}(i) yields that $U^S = H^S \cap U$
is a friendly open subset of  $H^S$ for this action. By
Lemma~\ref{lem.versalG/S} it suffices to show that $U^S \to Y$ is a
versal $N_G(S)/S$-torsor or, equivalently, that $H^S$ is a versal
$N_G(S)/S$-variety. Thus, given condition (ii), after replacing $G$
by $N_G(S)/S$ and $H$ by $H^S$, we may assume that $S = \{ 1 \}$.

Our goal now is to show that, under this assumption, the $G$-action
on the friendly open subset $U$
 of $H$ satisfies the conditions
of Definition~\ref{def.versal}, i.e.,
 for every field extension $L/k$,
every $G$-torsor $ P \to \Spec(L)$, and  every open dense subset
$Y_0$ of $Y$, there is a $G$-equivariant map $P \to U_0 :=
\pi^{-1}(Y_0)$. By Lemma~\ref{lem.twist} with $C = G_L$ this map
exists if and only if $(U_0)_L \times^{G_L}\!P$ has an $L$-point.
Since $U_0$ is a dense $G$-invariant subset of $U$ (and hence of
$H$), we see that $(U_0)_L \times^{G_L}\!P$ is a dense open subset
of $H_L \times^{G_L}\!P$.  Thus, it suffices to show that $L$-points
are dense in $H_L \times^{G_L}\!P$.

Lemma~\ref{lem.group} implies that $H_L \times^{G_L} P$ is an
algebraic group over $L$. It is connected because $H$ is connected.
Let $L'$ be a finite field extension of $L$ such that $P$ splits
over $L'$. Then the $L'$-groups  $(H_L \times^{G_L} P)\times_{L} L'$
and $H_L\times _{L} L'$  are isomorphic. Since $H$ is linear, this
yields that $(H_L \times^{G_L} P)\times_{L} L'$ is linear. The
standard descent result \cite[Vol.\,24, Prop.\,2.7.1(xiii)]{EGA4}
then implies that the $L$-group $H_L \times^{G_L} P$ is linear. But
since ${\rm char}(L)=0$, by Chevalley's
theorem~\cite[Theorem~18.2(ii)]{borel} any connected linear
algebraic group defined over $L$ is unirational over $L$. Hence $H_L
\times^{G_L} P$  is unirational over $L$ and therefore $L$-points
are dense in it, as claimed.
\end{proof}

\begin{cor} \label{cor.versal-c}\
\begin{enumerate}
\item[\rm(a)] {\rm(}cf.\;{\rm\cite[Prop.\;7.1]{reichstein}} and {\rm\cite[Ex.\,I.5.4]{gms})}
Every finite-dimensional generically free $G$-mo\-du\-le $V$ defined
over $k$ is a versal $(G,\{1\})$-va\-ri\-ety.

\item[\rm(b)] If $\;G$ is reductive and $V$ is a
finite-dimensional $G$-module defined over $k$, then $V$ is a versal
$(G, S)$-variety for a suitable $k$-subgroup $S$ of $\;G$.  There
exists a dense
 open subset $U$ of $\;V$ such that the $G$-stabilizer of each
point of $\;U(k)$ is a possible choice for $S$.
\end{enumerate}
\end{cor}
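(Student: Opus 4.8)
The plan is to deduce both parts of Corollary~\ref{cor.versal-c} directly from Proposition~\ref{prop.versal0}, which was set up precisely for this purpose. The proposition asserts that if a linear algebraic group $G$ acts by group automorphisms on a connected linear algebraic group $H$, and if a stabilizer $S$ in general position exists with $H^S$ connected, then $H$ is a versal $(G,S)$-variety. To apply this to a $G$-module $V$, I would take $H=V$, viewed as the connected commutative (vector) group on which $G$ acts by group automorphisms through the given linear representation $G\hookrightarrow\GL(V)$. Since $V$ is a vector group it is connected and linear, so hypothesis~(i) of Proposition~\ref{prop.versal0} is reduced to the existence of a stabilizer in general position for the $G$-action on $V$, and hypothesis~(ii) amounts to the connectedness of the fixed-point subgroup $V^S$.

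For part~(b), where $G$ is reductive, the existence of a stabilizer $S$ in general position is immediate from Theorem~\ref{richardson1}(i): the $G$-module $V$ is a smooth affine geometrically integral $k$-variety (an affine space) on which the reductive group $G$ acts, and $V(k)$ is Zariski dense, so Richardson's theorem supplies a closed $k$-subgroup $S$ for which $V$ is a $(G,S)$-variety, with $S$ realizable as the stabilizer of a suitable $k$-point and hence as the stabilizer of each point of a dense open $U\subset V$. For hypothesis~(ii), I would observe that $V^S$ is the subspace of $S$-fixed vectors in $V$; since $S$ acts linearly, $V^S$ is a linear \emph{subspace}, hence again a vector group, which is automatically connected. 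With both hypotheses verified, Proposition~\ref{prop.versal0} yields that $V$ is a versal $(G,S)$-variety, and the final sentence of~(b) follows from the description of $S$ in Theorem~\ref{richardson1} together with the statement in Proposition~\ref{prop.versal0}.

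For part~(a), I would treat the generically free case directly. Here the hypothesis is that the $G$-stabilizers of points in a dense open set are trivial, so the stabilizer in general position is $S=\{1\}$; this is exactly condition~(i) with $S$ trivial. Then $H^S=V^{\{1\}}=V$ is connected, so~(ii) holds trivially, and Proposition~\ref{prop.versal0} gives that $V$ is a versal $(G,\{1\})$-variety, i.e.\ a versal $G$-variety in the torsor sense. (Note that for~(a) I need not assume $G$ reductive, which is consistent with the generality of the cited references.)

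The only genuinely substantive point, and where I would focus the argument, is the connectedness of $V^S$ in part~(b): one must be sure that the scheme-theoretic fixed-point subgroup $V^S$ (as defined just before Proposition~\ref{properties} via the functor $A\mapsto Z(A)^{S(A)}$) coincides with the linear subspace of $S$-invariant vectors and is therefore smooth and connected. Because the $G$-action on $V$ is linear, $V^S$ is cut out by the linear equations $s\cdot v=v$ and is itself a $k$-vector space, so this is a short verification rather than a real obstacle; once it is in place both parts are immediate specializations of Proposition~\ref{prop.versal0}. I would also remark, as the paper does, that the hypothesis~(i) is free of charge when $G$ is reductive by Theorem~\ref{richardson1}(i), which is what makes part~(b) clean.
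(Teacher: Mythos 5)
Your proposal is correct and follows exactly the paper's route: the published proof is the one-line instruction to view $V$ as the unipotent group $\mathbf{G}_a^{\dim V}$ (so that $G$ acts by group automorphisms) and apply Proposition~\ref{prop.versal0}, invoking Theorem~\ref{richardson1}(i) in part (b). You merely spell out the verifications the paper leaves implicit -- that generic freeness gives $S=\{1\}$ in (a), and that the fixed-point scheme $V^S$ is a linear subspace, hence connected, in (b) -- all of which are accurate.
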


\begin{proof}
In both parts view $V$ as the unipotent $k$-group $\,{\bf G}_a^{\dim
V}$ and apply Proposition~\ref{prop.versal0} and, in part (b),
Theorem~\ref{richardson1}(i).
\end{proof}

\begin{lem} \label{lem.fibreing}
Let $\varphi \colon X \to Y$ be a dominant morphism of integral
$k$-varieties.  Denote the generic point of $Y$ by $\eta$ and the
generic fibre of $\varphi $ by $X_{\eta}$.

\begin{enumerate}
 \item[\rm(a)]
Suppose $\varphi $ has a rational section $s \colon Y \dasharrow X$.
Then there exists a dense open subset $Y_0$ of $\;Y$ defined over
$k$ such that for any morphism $Z \to Y_0$ of integral schemes, the
natural projection $\varphi_Z\colon X_Z:=X\times_Y Z \to Z$ has a
section $Z \to X_Z$.

 \item[\rm(b)] Suppose the generic fibre $X_{\eta}$ is connected and
is rational {\rm(}respectively, stably rational\,{\rm)} over $k(Y)$.
Then there exists a dense open subset $Y_0$ of $\;Y$ defined over
$k$ such that for any extension field $L/k$ and any point $y\in
Y_{0}(L)$, the fibre $X_{y}$ of $\;\varphi$ over $y$ is integral and
rational {\rm(}respectively, stably rational\,{\rm)} over $L$.
\end{enumerate}
 \end{lem}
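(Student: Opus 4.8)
The plan is to treat the two parts separately, deriving (a) immediately from the universal property of the fibre product and (b) by a spreading-out argument that propagates the rationality of the generic fibre to all nearby fibres.

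For part (a), a rational section is an honest morphism $s_0\colon Y_0\to X$ over some dense open $Y_0\subseteq Y$ with $\varphi\circ s_0=\iota$, where $\iota\colon Y_0\hookrightarrow Y$ is the inclusion. Given an integral scheme $Z$ with a morphism $g\colon Z\to Y_0$, the two morphisms $s_0\circ g\colon Z\to X$ and $\id_Z\colon Z\to Z$ agree after composing down to $Y$ (both give $\iota\circ g$), so by the universal property of $X_Z=X\times_Y Z$ they induce a morphism $\sigma\colon Z\to X_Z$ whose second component is $\id_Z$; this $\sigma$ is the desired section of $\varphi_Z$. No further shrinking of $Y_0$ is needed, so part (a) is essentially formal.

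For part (b), first note that rationality of the (automatically integral) generic fibre $X_\eta$ over $k(Y)=\kappa(\eta)$ forces $X_\eta$ to be geometrically integral, since its function field is purely transcendental over $k(Y)$. The rationality supplies a birational isomorphism $X_\eta\dasharrow \bbA^n_{\kappa(\eta)}$, i.e.\ an isomorphism $W_\eta\oii W'_\eta$ between dense open subschemes of $X_\eta$ and of $\bbA^n_{\kappa(\eta)}$. Since $\kappa(\eta)=\varinjlim_U k[U]$ over the dense open $U\subseteq Y$ and the isomorphism is cut out by finitely many functions, it descends to a dense open $Y_0\subseteq Y$: after shrinking, I obtain dense open subschemes $W\subseteq\varphi^{-1}(Y_0)$ and $W'\subseteq\bbA^n_{Y_0}$ together with an isomorphism $G\colon W\oii W'$ of $Y_0$-schemes whose restriction to the generic fibre is the given map.

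It remains to arrange that $G$ restricts to a birational isomorphism on every fibre, and this is where the real work lies. Shrinking $Y_0$ further, I ensure: (i) every fibre $X_y$ is geometrically integral, using the constructibility of the geometric-integrality locus (\cite[IV, 9.7.7]{EGA4}) together with the fact that this locus contains $\eta$; and (ii) both $W\to Y_0$ and $W'\to Y_0$ are surjective, using Chevalley's theorem that their images are constructible and contain a dense open. Then for any field $L/k$ and any $y\in Y_0(L)$ lying over a scheme point $p$, base change gives an isomorphism $G_y\colon W_y\oii W'_y$; by (ii) both sides are nonempty, while by (i) the $L$-scheme $X_y=X_p\times_{\kappa(p)}L$ is integral and $\bbA^n_L$ is irreducible, so the nonempty opens $W_y\subseteq X_y$ and $W'_y\subseteq\bbA^n_L$ are automatically dense. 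Hence $X_y\dasharrow\bbA^n_L$ is a birational isomorphism and $X_y$ is rational over $L$. The stably rational case is identical after replacing $\bbA^n$ by the appropriate affine spaces: one spreads out a birational isomorphism $X_\eta\times\bbA^m\dasharrow\bbA^N$ and uses $(X\times_Y\bbA^m_Y)_y=X_y\times_L\bbA^m_L$. The main obstacle is precisely the combination of (i) and (ii)\,---\,guaranteeing that the spread-out isomorphism stays nondegenerate on \emph{every} fibre, not just the generic one\,---\,and this is exactly what the constructibility and Chevalley results deliver.
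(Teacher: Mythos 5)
Your proof is correct and follows essentially the same route as the paper's: part (a) is the same formal pull-back of the section over its locus of regularity, and part (b) is the same spreading-out argument, replacing the paper's $Y$-birational comparison of $X$ with $\mathbf{P}^d_Y$ (resp.\ of $X\times_Y\mathbf{P}^n_Y$ with $\mathbf{P}^{n+d}_Y$) by the equivalent comparison with affine space over a dense open $Y_0$, then shrinking so the relevant opens hit every fibre and specializing. If anything, you are more explicit than the paper on why the fibres $X_y$ are integral, citing the constructibility of the geometrically integral locus \cite[IV, 9.7.7]{EGA4}, a point the paper's proof leaves implicit.
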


\begin{proof} (a) Choose $Y_0 \subset Y$ so that $s$ is regular on $Y_0$
and pull back the section $s$ to $X_Z$.

\smallskip
Before we prove (b), let us  discuss a more general situation. Let
$p: X \to Y$ and $p' : X' \to Y$ be two dominant $k$-morphisms of
geometrically integral $k$-varieties with geometrically integral
generic fibres.
 Assume that the generic fibres are birationally isomorphic
 over $k(Y)$. Then $X$ and $X'$ are birationally isomorphic over $Y$. There thus exist two dense
open sets $U \subset X$ and $U' \subset X'$ and a $Y$-isomorphism $U
\overset{\cong}\to U'$.  Let $Y_{0}\subset p(U)$ be a Zariski dense
open set and replace $U$ and $U'$ by their restrictions over
$Y_{0}$. Then all geometric fibres of $U \to Y_{0}$ and $U' \to
Y_{0}$ are nonempty. For any point $y \in Y_{0}$ this induces a
$k(y)$-isomorphism between the nonempty fibre $U_{y} \subset X_{y}$
and the nonempty fibre $U'_{y} \subset X'_{y}$. The same therefore
holds over $L$ with $k(y) \subset L$.

To prove statement (b) in the case where $X_{\eta}$ is rational,
 it suffices   to apply this argument to
$X$ and $X'={\bf P}^d_{Y}$ where $d$ is the dimension of $X_{\eta}$
and ${\bf P}^d_{Y}$ is the $d$-dimensional projective space over
$Y$.

To prove statement (b) in the case where $X_{\eta}$ is stably
rational,
 it suffices   to apply this argument to  the pair
$X \times_{Y}{\bf P}^n_{Y}$ and $ {\bf P}^{n+d}_{Y}$, with $d$ as
above and $n$ some positive integer.
\end{proof}

\begin{defn}
Given a $(G,S)$-variety $X$, we shall say that it admits a rational
section if for some and hence any friendly open set $U \subset X$
the quotient map $U \to U/G$ admits a section over a dense open set
of $U/G$.
\end{defn}

\begin{thm} \label{thm1'} Let $G$ be a   linear algebraic group over $k$, let
$S$ be a closed $k$-subgroup of $\;G$, and let $V$ be a
geometrically integral ver\-sal $(G, S)$-variety.

\begin{enumerate}
 \item[\rm (a)] If the $(G,S)$-variety $V$ admits
 a rational section,  then for every field extension $F/k$
  every
  geometrically integral
  $(G, S)$-variety $X$ over $F$
admits a rational section.

 \item[\rm(b)] Assume that the homogeneous space $G/S$ is connected.
If $k(V)/k(V)^G$ is pure {\rm(}respectively, stably pure{\rm)},
then  for every field extension $F/k$ and every geometrically
integral $(G, S)$-variety $X$ over $F$,  the field extension
$F(X)/F(X)^G$ is pure {\rm(}respectively, stably pure{\rm)}.
\end{enumerate}
\end{thm}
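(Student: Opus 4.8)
\textbf{Proof proposal for Theorem~\ref{thm1'}.}
The plan is to transport the properties ``admits a rational section'' and ``$F(X)/F(X)^G$ is (stably) pure'' from the versal object $V$ to an arbitrary $(G,S)$-variety $X$ over $F$ by realizing $X$ (generically) as a pullback of the versal fibration. The essential mechanism is the versality hypothesis combined with Lemma~\ref{lem.fibreing}, which lets us spread out a section or a rational parametrization from the generic fibre to honest geometric fibres over an $F$-point. First I would fix friendly open sets: let $\pi\colon U\to Y$ be the $(G,S)$-fibration coming from the versal structure on $V$, and let $\varrho\colon W\to B$ be a friendly $(G,S)$-fibration for $X$ over $F$. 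Since a $(G,S)$-variety admits a rational section (resp.\ has pure invariant extension) independently of the chosen friendly open set, it suffices to work with these.

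For part (a), I would argue as follows. Consider the generic fibre of $\pi$, namely the $(G_{k(Y)},S_{k(Y)})$-fibration over $\Spec k(Y)$ obtained by pullback to the generic point $\eta$ of $Y$. Apply Lemma~\ref{lem.fibreing}(a) to $\pi\colon U\to Y$: the hypothesis that $V$ admits a rational section gives a rational section $s\colon Y\dasharrow U$, hence there is a dense open $Y_0\subset Y$ such that every base change $Z\to Y_0$ produces a section of $U\times_Y Z\to Z$. Now the friendly fibration $\varrho\colon W\to B$ for $X$, restricted to the generic point of $B$, is a $(G_{F(B)},S_{F(B)})$-fibration over $\Spec F(B)$; by versality of $\pi$ (Definition~\ref{def.versal}, with $L=F(B)$ and the open set $Y_0$) there is an $F(B)$-point of $Y_0$ and a Cartesian square identifying this generic fibre of $\varrho$ with the pullback of $\pi$ over that point. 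Spreading out, I get a dense open $B_0\subset B$, a morphism $B_0\to Y_0$, and a $(G,S)$-isomorphism over $B_0$ between $\varrho^{-1}(B_0)$ and $U\times_Y B_0$. Pulling back the section guaranteed by Lemma~\ref{lem.fibreing}(a) along $B_0\to Y_0$ then yields a section of $\varrho$ over $B_0$, which is exactly a rational section of the $(G,S)$-variety $X$.

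For part (b), I would run the analogous argument but track the function-field extension instead of a section. Using the isomorphism $\pi^*\colon k(Y)\overset{\cong}{\to}k(V)^G$ (noted after Definition~\ref{defGSvariety1}) and the identification of $U$ with a $(G/S)$-bundle, the hypothesis that $k(V)/k(V)^G$ is pure (resp.\ stably pure) says precisely that the generic fibre $U_\eta$ of $\pi$, a homogeneous-space form over $k(Y)=k(V)^G$, is $k(Y)$-rational (resp.\ stably rational); here the connectedness of $G/S$ guarantees this generic fibre is geometrically integral, so Lemma~\ref{lem.fibreing}(b) applies. That lemma then furnishes a dense open $Y_0\subset Y$ such that for every field $L\supset k$ and every $y\in Y_0(L)$ the fibre $\pi^{-1}(y)$ is integral and rational (resp.\ stably rational) over $L$. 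Invoking versality with $L=F(B)$ and the open set $Y_0$, I obtain an $F(B)$-point $y$ of $Y_0$ whose fibre is $F(B)$-isomorphic to the generic fibre $W_{\eta}$ of $\varrho$. Hence $W_\eta$ is rational (resp.\ stably rational) over $F(B)=F(X)^G$, and since $F(W)=F(X)$ while $F(B)\cong F(X)^G$, this says $F(X)/F(X)^G$ is pure (resp.\ stably pure), as required.

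The main obstacle, and the step I would treat most carefully, is the passage from the abstract Cartesian diagram provided by Definition~\ref{def.versal} over the \emph{field} $F(B)$ to a statement over the honest variety $X$ with its chosen friendly fibration. Concretely, I must check that the generic fibre of the friendly fibration $\varrho\colon W\to B$ for $X$ is itself a genuine $(G_{F(B)},S_{F(B)})$-fibration over $\Spec F(B)$ (so that versality is applicable), and then verify that the $F(B)$-isomorphism of generic fibres correctly matches up the quotient maps so that rationality of the versal generic fibre over $k(Y)$ descends, via the specialization $y\in Y_0(F(B))$ afforded by Lemma~\ref{lem.fibreing}(b), to rationality of $W_\eta$ over $F(B)$. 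The geometric-integrality of $G/S$ in part (b) is exactly what lets Lemma~\ref{lem.fibreing}(b) be invoked, and keeping track of the identification $F(X)^G\cong F(B)$ throughout is where the bookkeeping must be done with care.
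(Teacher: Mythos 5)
Your proposal is correct and matches the paper's own proof in all essentials: both realize the generic fibre of a friendly fibration of $X$ as a pullback of the versal fibration via Definition~\ref{def.versal}, spread this out over a dense open subset of the base, and then invoke Lemma~\ref{lem.fibreing}(a) (respectively (b), where the connectedness of $G/S$ supplies the geometric integrality of the generic fibre) to transport the rational section (respectively the purity or stable purity of the invariant field extension). The only cosmetic difference is that the paper first reduces to $F=k$, noting that $V_F$ remains versal over $F$, whereas you apply versality directly with $L=F(B)$ --- which Definition~\ref{def.versal} permits, since it quantifies over all field extensions of $k$.
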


\begin{proof}
 Note that  if $V$  is
a versal $(G, S)$-variety over $k$ then $V_F$ is a versal $(G,
S)$-variety over $F$. Since the hypothesis $F(V)/F(V)^G$ pure, or
stably pure, holds as soon as it does for $k(V)/k(V)^G$, it is
enough to prove the theorem for $F=k$. After replacing $V$ by a
friendly open subset we may assume that we are given a
$(G,S)$-fibration $\pi \colon V \to Y$. Choose a dense open subset
$Y_0 \subset Y$ as in Lemma~\ref{lem.fibreing}(a).

(a) After replacing $X$ by a friendly open subset, we may assume
that $X$ is the total space of a $(G,S)$-fibration $\alpha \colon X
\to Z$. Let $\eta$ be the generic point of $Z$. Since $\pi$ is
versal, the $(G,S)$-fibration $\alpha_{\eta} \colon X_{\eta} \to
\eta$ can be obtained  by pull-back from $\pi$ via a morphism $\Spec
\, k(Z) \to Y_0$. In other words, after replacing $X$ by a smaller
friendly open set, we may assume that $\alpha \colon X \to Z$ is the
pull-back of $\pi \colon V \to Y$ via a morphism $Z \to Y_0 \subset
Y$. The desired conclusion now follows from
Lemma~\ref{lem.fibreing}(a).

The proof of part (b) is exactly the same, except that we appeal to
Lemma~\ref{lem.fibreing}(b), rather than to
Lemma~\ref{lem.fibreing}(a).
\end{proof}

\begin{lem} \label{pointunirat}
Let $G$  be a connected linear algebraic  group over $k$ and let $X$
be a geometrically integral $k$-variety with  $G$-action which
admits  a geometric quotient $\pi\colon X  \to Y$. The following
properties are equivalent:
\begin{enumerate}
 \item[\rm(a)]
 $\pi \colon X \to Y$  admits a rational section;

 \item[\rm(b)]
 $k(X)$ is unirational over $k(X)^G$.
     \end{enumerate}
\end{lem}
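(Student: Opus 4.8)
The plan is to prove the equivalence of (a) and (b) in Lemma~\ref{pointunirat} by working with the generic fibre of $\pi$. Let $\eta$ be the generic point of $Y$, so that $K := k(Y)$ is the function field of $Y$ and $X_\eta$ is the generic fibre, a $G_K$-variety over $K$. The key observation is that, since $G$ is connected, the geometric quotient $\pi$ is a $(G,S)$-fibration for a suitable stabilizer $S$ in general position (by Rosenlicht's theorem and Theorem~\ref{generalGS}), at least after shrinking $X$ to a friendly open subset. Consequently $X_\eta$ is a $(G_K, S_K)$-variety over $K$ whose quotient is a single $K$-point, which means $X_\eta$ is birationally a torsor-type object: more precisely, $X_\eta$ is birationally isomorphic over $K$ to $G_K/S_K$ twisted by some $H$-torsor, where $H = N_G(S)/S$.

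First I would prove $(a)\Rightarrow(b)$. A rational section $s\colon Y \dasharrow X$ restricts to a $K$-point of $X_\eta$, equivalently a $K$-point $x_0$ on the friendly open subset of the generic fibre. Using this $K$-point, the $(G_K,S_K)$-fibration over $\Spec K$ becomes the \emph{trivial} one: there is a $G_K$-equivariant birational isomorphism $G_K/S_K \dasharrow X_\eta$ over $K$, namely $\widetilde g \mapsto g\cdot x_0$. Passing to function fields, this gives a $G$-equivariant inclusion $k(X) \hookrightarrow$ (something built from $k(G/S)$ over $K$). The field $K = k(Y) = k(X)^G$ by Remark~\ref{rem.separate}, and since $G$ is connected linear, it is unirational over $k$ by Chevalley's theorem; the same then holds for $G/S$ over $K = k(X)^G$. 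Pulling back along the section shows $k(X)$ is contained in a field pure over $k(X)^G$, which is exactly the assertion that $k(X)$ is unirational over $k(X)^G$.

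For the converse $(b)\Rightarrow(a)$, suppose $k(X)/k(X)^G$ is unirational, so $k(X)$ embeds in a field $L$ pure over $K = k(X)^G = k(Y)$. The plan is to extract a $K$-point of $X_\eta$ from this. Concretely, unirationality provides a dominant rational map from some projective space $\bbP^N_K$ to $X_\eta$ defined over $K$; since $\bbP^N_K$ has a dense set of $K$-points (as $K$ is infinite) lying in the domain of definition, the image yields a $K$-point of $X_\eta$. Such a $K$-point is precisely a rational section of $\pi$ over a dense open subset of $Y$, which establishes (a). Here I would use Lemma~\ref{lem.fibreing} to pass between the statement about the generic fibre and statements about an honest dense open subset $Y_0 \subset Y$.

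The main obstacle, and the step requiring the most care, is the direction $(b)\Rightarrow(a)$: one must ensure that the abstract unirationality of the \emph{field} extension $k(X)/k(X)^G$ translates into a genuine geometric $K$-point of the generic fibre $X_\eta$ rather than merely a point over some extension of $K$. The resolution rests on the fact that a unirational variety over an infinite field $K$ (which $K = k(Y)$ is, containing $k$ of characteristic zero) has $K$-points dense in it, so the dominant map from projective space lands on actual $K$-points of $X_\eta$ in its domain of definition. One must also verify that $X_\eta$ is geometrically integral so that the function-field arithmetic is legitimate, which follows from the assumption that $X$ is geometrically integral together with generic smoothness of $\pi$ in characteristic zero.
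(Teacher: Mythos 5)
Your proof is correct in substance and takes essentially the same route as the paper: for (b)\,$\Rightarrow$\,(a) the paper likewise produces a dominant $K$-rational map $\bbP^N_K \dasharrow X_\eta$ (with $K=k(Y)\cong k(X)^G$) and uses density of $K$-points of projective space over the infinite field $K$ to obtain a $K$-point of the generic fibre, i.e.\ a rational section; for (a)\,$\Rightarrow$\,(b) it likewise observes that the generic fibre is a $G_K$-homogeneous space which, having a $K$-point, gives inclusions $k(Y)\subset k(X)\subset K(G)$, and concludes by Chevalley's theorem that connected linear groups are unirational in characteristic zero.

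One caveat about your framing. Your ``key observation'' that $\pi$ is a $(G,S)$-fibration for a stabilizer $S$ in general position, justified ``by Rosenlicht's theorem and Theorem~\ref{generalGS}'', is not available at this level of generality: Rosenlicht's theorem produces a geometric quotient over a dense open set but does \emph{not} produce a stabilizer in general position, and the paper explicitly notes (after Theorem~\ref{generalGS}) that such a stabilizer can fail to exist even for reductive group actions. Fortunately this is not load-bearing in your argument: all you actually use is that the generic fibre is a single $G_K$-orbit, which follows directly from condition (i) in the definition of a geometric quotient, and then the orbit map through $x_0$ identifies $X_\eta$ birationally with $G_K/\Stab_{G_K}(x_0)$. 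Note in this connection that the stabilizer of $x_0$ need not equal $S_K$ (it is at best a $K$-form, conjugate to $S$ over $\overline{K}$), so your map $\widetilde g \mapsto g\cdot x_0$ should be defined on $G_K/\Stab_{G_K}(x_0)$ rather than on $G_K/S_K$; for unirationality this makes no difference, since $K(G_K/\Stab_{G_K}(x_0))\subset K(G)$ in any case. Finally, geometric integrality of $X_\eta$ does not follow from generic smoothness, which only yields geometric reducedness; the correct reason is that $k(X)^G$ is algebraically closed in $k(X)$ because $G$ is connected --- though for the proof plain integrality of $X_\eta$, with function field $k(X)$, already suffices, and this is all the paper uses.
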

\begin{proof}
We know that $\pi$ induces an isomorphism $\pi^* \colon k(Y) \oii
k(X)^G$. Let $X_{\eta}$ be the $k(Y)$-variety which is the generic
fibre of $\pi \colon X \to Y$.

Assume (b). The hypothesis   implies that there exists a dominant
$k(Y)$-rational map $\varphi$ from some projective space
$\bbP^n_{k(Y)}  $ to $X_{\eta}$. This rational map is defined on a
dense open set $U \subset \bbP^n_{k(Y)}$. Since rational points are
Zariski dense on projective space over an infinite field, the set
$U(k(Y))$ is nonempty. The $k(Y)$-morphism $\varphi : U \to
X_{\eta}$ sends such a point to a $k(Y)$-point of $X_{\eta}$, i.e.,
to a rational section of $X \to Y$. Thus (a) holds.

  Assume (a) holds.
By the definition of $\pi$, the generic fibre of $\pi$ is a
$k(Y)$-variety with function field $k(X)$, and it is a homogeneous
space of $G_{k(Y)}$ which  by (a)  admits a $k(Y)$-rational point.
We thus have inclusions of fields $k(Y) \subset k(X) \subset
k(Y)(G)$. By a theorem of Chevalley, over a field of characteristic
zero, any connected linear algebraic group is unirational (see
\cite[Theorem\;18.2]{borel} or \cite[Vol.\;II, Chap.\;XIV,
Cor.\;6.10]{sga3}). Thus $k(Y)(G)$ embeds into a purely
transcendental extension of $k(Y) \oii k(X)^G$. This proves (b).
\end{proof}

\section{The conjugation action and the adjoint action}
\label{conjugationandadjointaction}

We now concentrate on the main actors. We recall that $k$ is a field
of characteristic zero. Let $G$ be a connected reductive group over
$k$.

\subsection{Quotients by the adjoint action,
versal \boldmath $(G,S)$-varieties, and Kostant's theorem}

  The radical ${\Rad}(G)$ of $\,G$ is
a  central $k$-torus in $G$. The $G$-stabilizer of a point $g\in G$
for the conjugation action of $\,G$ on itself is the centralizer of
$g$ in $G$. There is a Zariski dense open set of $G$ such that the
centralizers of its points in $G$ are maximal tori of $G$, see
\cite[12.2, 13.1, 13.17, 12.3]{borel}.

\begin{lem} \label{closedness} The following properties of an element $g\in G$ are equivalent:
\begin{enumerate}
\item[\rm(i)]
the conjugacy class of $\,g$ is closed in $G$;
 \item[\rm(ii)] $g$ is semisimple.
\end{enumerate}
\end{lem}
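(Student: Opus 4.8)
The plan is to prove the two implications separately, using the Jordan decomposition $g = g_s g_u$ (with $g_s$ semisimple, $g_u$ unipotent, and $g_s g_u = g_u g_s$). Since closedness of the conjugacy class $C(g) = \{xgx^{-1} : x \in G\}$ can be checked after extension of scalars to $\kbar$ (formation of the orbit and of its closure commutes with this base change, $G$ being connected), I would reduce at once to the case $k = \kbar$. The implication (i)$\Rightarrow$(ii) is the easy one: I would exhibit a point of $\overline{C(g)} \setminus C(g)$ whenever $g$ is not semisimple. The converse (ii)$\Rightarrow$(i) is where the real content lies.

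For (i)$\Rightarrow$(ii), I would argue contrapositively and assume $g_u \neq 1$. The idea is that the identity component $C_G(g_s)^\circ$ is connected reductive (as $g_s$ is semisimple and $G$ is connected reductive), and $g_u$ is a nontrivial unipotent element of it, hence lies in the unipotent radical $U$ of some Borel subgroup $B \subseteq C_G(g_s)^\circ$. Choosing a regular dominant cocharacter $\lambda \colon \bbG_m \to T$ of a maximal torus $T \subseteq B$, one gets $\lim_{t \to 0} \lambda(t) u \lambda(t)^{-1} = 1$ for all $u \in U$, since conjugation by $\lambda(t)$ scales each positive root group by a positive power of $t$. As $\lambda$ is valued in $C_G(g_s)$, it fixes $g_s$, whence
\[ \lim_{t \to 0} \lambda(t)\, g\, \lambda(t)^{-1} = g_s \cdot \lim_{t\to 0}\lambda(t)\, g_u\, \lambda(t)^{-1} = g_s . \]
Thus $g_s \in \overline{C(g)}$; since $g$ is not semisimple while $g_s$ is, uniqueness of the Jordan decomposition shows $g_s$ is not conjugate to $g$, so $g_s \in \overline{C(g)} \setminus C(g)$ and $C(g)$ is not closed.

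For (ii)$\Rightarrow$(i), let $g = s$ be semisimple and fix a closed embedding $\rho \colon G \hookrightarrow \GL(V)$. Then $\rho(s)$ is diagonalizable, and its $\GL(V)$-conjugacy class — the set of diagonalizable matrices with the given characteristic polynomial — is closed in $\GL(V)$. Because $C_G(s) \subseteq C_{\GL(V)}(\rho(s)) \cap G$ and $G$ is closed in $\GL(V)$, passing to closures yields $\overline{C_G(s)} \subseteq C_{\GL(V)}(\rho(s)) \cap G$; in particular \emph{every} element of $\overline{C_G(s)}$ is semisimple and $\GL(V)$-conjugate to $s$. The remaining task is to upgrade this to $G$-conjugacy. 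For a boundary point $z \in \overline{C_G(s)}$, I would invoke the Hilbert--Mumford--Kempf theorem to write $z = \lim_{t\to 0} \lambda(t)\, s\, \lambda(t)^{-1}$ for a cocharacter $\lambda$ of $G$ (after conjugating $s$); this limit is exactly the image of $s$ under the retraction $P(\lambda) \to L(\lambda) = C_G(\lambda)$ of the associated parabolic onto its Levi. Since $s$ is semisimple, it is conjugate through the unipotent radical $U(\lambda)$ to this Levi image, so $z \in C_G(s)$, giving $\overline{C_G(s)} = C_G(s)$.

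I expect the last step of (ii)$\Rightarrow$(i) to be the main obstacle: controlling the semisimple boundary points, that is, the facts that such a boundary point is reached by a one-parameter limit and that a semisimple element of a parabolic is conjugate into a Levi factor via its unipotent radical. This is precisely where the semisimplicity of $s$ is genuinely used, and it is the point at which one may instead simply cite the classical results of Borel and Steinberg on conjugacy classes in reductive groups.
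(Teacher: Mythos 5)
Your proposal is correct, but it takes a genuinely different route from the paper's. The paper does not prove the statement from scratch: it quotes Steinberg's theorem for semisimple groups (\cite[6.13]{steinberg1}), and the actual content of its proof is the reduction of the reductive case to the semisimple one via the decomposition $G=(G,G)\cdot\Rad(G)$ --- writing $g=hz$ with $h\in (G,G)$ and $z\in\Rad(G)$ central, comparing Jordan decompositions to get $g_s=h_sz$ and $g_u=h_u$, and observing that $G\cdot g=(G\cdot h)z$ and that the $G$-class of $h$ equals its $(G,G)$-class, so both semisimplicity and closedness transfer between $g$ and $h$. You instead prove both implications directly: for (i)$\Rightarrow$(ii), the contraction $\lim_{t\to 0}\lambda(t)\,g\,\lambda(t)^{-1}=g_s$ by a regular dominant cocharacter of $C_G(g_s)^{\circ}$ (note that in characteristic zero $g_u$ indeed lies in this identity component, and the torus case is excluded since tori contain no nontrivial unipotents); for (ii)$\Rightarrow$(i), Hilbert--Mumford--Kempf combined with the fact that a semisimple element of $P(\lambda)$ is $U(\lambda)$-conjugate to its image in the Levi. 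This in effect reproves Steinberg's theorem uniformly for reductive $G$ and is self-contained modulo standard structure theory, at the cost of invoking the heavier Hilbert--Mumford machinery where the paper needs only a citation plus elementary bookkeeping with the central torus. Two small points. First, Hilbert--Mumford--Kempf does not realize an \emph{arbitrary} boundary point $z$ as $\lim_{t\to 0}\lambda(t)\,s\,\lambda(t)^{-1}$; it does so for points of a closed $G$-stable subset of $\overline{C(s)}$, e.g.\ the closed orbit inside the boundary. So you should take $S=\overline{C(s)}\setminus C(s)$, assume it nonempty, obtain $z\in S$ of the stated limit form, and derive the contradiction $z\in C(s)$ from the $U(\lambda)$-conjugacy; the argument then closes exactly as you intend. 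Second, your $\GL(V)$-embedding step is harmless but redundant: the Kempf--Levi argument alone yields closedness, without knowing in advance that boundary points are semisimple and $\GL(V)$-conjugate to $s$.
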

\begin{proof} For  semisimple groups
this is proved in \cite[6.13]{steinberg1}. The general case can be
reduced to that of semisimple groups in the following manner. Let
$(G, G)$ be the commutator subgroup of $G$. It is a closed,
connected, semisimple $k$-subgroup of $G$, and $G=(G, G)\cdot {\rm
Rad}(G)$, see \cite[2.3, 14.2]{borel}. Let $g=hz$ for some $h\in (G,
G)$, $z\in {\rm Rad}(G)$, and let $g=g_sg_u$, $h=h_sh_u$ be the
Jordan decompositions, see \cite[4.2]{borel}. Since ${\rm Rad}(G)$
is a central torus, $g_s=h_sz$, $g_u=h_u$. Hence $g$ is semisimple
if and only if $h$ shares this property. Let $G\cdot g$ and $G\cdot
h$ be respectively the $G$-conjugacy classes of $g$ and $h$ in $G$.
Since $z$ is central, $G\cdot g=(G\cdot h)z$. Hence closedness of
$G\cdot g$ in $G$ is equivalent to that of $G\cdot h$. But $G\cdot
h$ coincides with the $(G, G)$-conjugacy class of $h$ in $(G, G)$
and, since $(G, G)$ is semisimple, the cited result in
\cite{steinberg1} shows that the latter is closed in $(G, G)$ if and
only if $h$ is semisimple. This completes the proof.
\end{proof}

\begin{cor}\label{stability_conj} The action of $\,G$ on itself by conjugation is stable.
\end{cor}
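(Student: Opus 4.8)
The plan is to combine the two facts assembled just above the corollary: the density statement (cited from Borel) that the centralizers of the points of a Zariski dense open subset of $G$ are maximal tori, and Lemma~\ref{closedness}, which characterizes closed conjugacy classes as those of semisimple elements. Recall that to verify stability (per the definition preceding Theorem~\ref{richardson1}) I must produce a dense open subset $U \subset G$ such that the $G$-orbit of every point of $U(\overline k)$ is closed in $G_{\overline k}$. Since the $G$-orbit of $g$ for the conjugation action is precisely its conjugacy class $G \cdot g$, the task reduces to finding a dense open $U$ all of whose $\overline k$-points are semisimple.

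First I would take $U$ to be the dense open subset of $G$, defined over $k$, whose existence is recorded in the paragraph opening Section~\ref{conjugationandadjointaction}: for each $g \in U$ the centralizer $C_G(g)$ is a maximal torus of $G$. The key elementary observation is then that any such $g$ is automatically semisimple: since $g$ commutes with itself, we have $g \in C_G(g)$, and if $C_G(g)$ is a maximal torus $T$, then $g \in T$, so $g$ is semisimple because every element of a torus is semisimple.

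With this in hand I would invoke Lemma~\ref{closedness}, applied over $\overline k$ to the group $G_{\overline k}$: for $g \in U(\overline k)$, semisimplicity of $g$ yields that its conjugacy class $G_{\overline k} \cdot g$ is closed in $G_{\overline k}$. As this holds for every $\overline k$-point of the dense open set $U$, the conjugation action satisfies the defining condition of stability, completing the proof.

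The only point requiring a little care, and the place where I would focus attention, is the passage between the ground field $k$ and its algebraic closure: the open set $U$ is furnished over $k$, but the closedness condition in the definition of stability must be checked for orbits of $\overline k$-points inside $G_{\overline k}$. This is harmless here, since both the density statement from Borel and Lemma~\ref{closedness} are geometric in nature and apply verbatim after base change to $\overline k$; in particular the characterization ``centralizer a maximal torus $\Rightarrow$ semisimple $\Rightarrow$ closed class'' is invariant under extension of the base field. Thus no genuine obstacle arises, and the corollary follows immediately from the two cited results.
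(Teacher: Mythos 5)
Your proof is correct and follows exactly the route the paper intends: the corollary is stated without a separate proof precisely because it is the immediate combination of the two facts you cite, namely the density (from Borel) of elements whose centralizer is a maximal torus --- which, as you observe via $g \in C_G(g)$, forces such $g$ to be semisimple --- and Lemma~\ref{closedness} characterizing closed conjugacy classes. Your attention to the passage to $\overline k$ is a sound extra check, and it causes no difficulty since both ingredients are geometric statements.
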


Analogous statements hold for the adjoint  action of $G$ on $\g$,
see~\cite{kostant}.

\begin{prop} \label{prop.versal}
Let $G$ be a connected reductive group  over $k$ and $\g$ its Lie
algebra. Let $S \subset G$ be a maximal $k$-torus. Let $X$ be either
$G$ or $\g$ and let $\pi \colon X \to  Y :=X \catq G$ be the
categorical quotient for the conjugation, respectively, the adjoint
action.  Then
\begin{enumerate}
 \item[\rm(a)] there exists a dense Zariski open subset  $V$ of $\;Y$
with inverse image $U=\pi^{-1}(V)  $ such that  $\pi|_U \colon U \to
V$ is a $(G,S)$-fibration;

\item[\rm(b)] $\pi^* $ induces an isomorphism $k(Y) \oii k(X)^G$;

\item[\rm(c)] $X$ is a versal $(G, S)$-variety.
     \end{enumerate}
\end{prop}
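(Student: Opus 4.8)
The plan is to obtain parts~(a) and~(b) from Proposition~\ref{lem.catq-fibration} and part~(c) from Proposition~\ref{prop.versal0}. First I note that in both cases $X$ (namely $G$ or $\g$) is an affine, geometrically integral $k$-variety and that $X(k)$ is Zariski dense in $X$: this is clear for the affine space $\g$, and for $G$ it follows from the unirationality of a connected reductive group in characteristic zero (Chevalley's theorem, as used in Lemma~\ref{pointunirat}). The $G$-action is stable in both cases: for the conjugation action this is Corollary~\ref{stability_conj}, and for the adjoint action it is the analogous statement of Kostant \cite{kostant}. Proposition~\ref{lem.catq-fibration} therefore applies and produces a reductive $k$-subgroup $S'$ (the $G$-stabilizer of some $k$-point of $\pi^{-1}(V)$) and a dense open $V\subset Y$ such that $\pi|_{\pi^{-1}(V)}\colon \pi^{-1}(V)\to V$ is a $(G,S')$-fibration, together with the induced isomorphism $\pi^*\colon k(Y)\oii k(X)^G$. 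This last assertion is exactly~(b).

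Next I would identify $S'$ with the prescribed maximal $k$-torus $S$. By the fact recalled at the start of this section, there is a dense open subset of $G$ on which the centralizers (i.e.\ the stabilizers for the conjugation action) are maximal tori; the analogous statement for the adjoint action (regular semisimple elements of $\g$ have maximal-torus centralizers in $G$) again follows from Kostant's theory. On the other hand, by Proposition~\ref{properties}(iii) the $\kbar$-stabilizers of the $\kbar$-points of the $(G,S')$-fibration $\pi^{-1}(V)\to V$ are all conjugate to $S'_{\kbar}$. Comparing these two statements forces $S'_{\kbar}$ to be a maximal torus of $G_{\kbar}$. Since all maximal tori of $G_{\kbar}$ are conjugate, $S'_{\kbar}$ is conjugate to $S_{\kbar}$, and by the Remark following Definition~\ref{GS} (admitting a $(G,S')$-fibration structure is equivalent to admitting a $(G,S)$-fibration structure whenever $S'_{\kbar}$ and $S_{\kbar}$ are conjugate) the same morphism $\pi|_{\pi^{-1}(V)}$ is a $(G,S)$-fibration. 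This proves~(a), with the given $S$.

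For part~(c) I would invoke Proposition~\ref{prop.versal0}. In the conjugation case take $H=G$ with $G$ acting by inner automorphisms; in the adjoint case realize $X=\g$ as the unipotent group ${\bf G}_a^{\dim\g}$ with $G$ acting $k$-linearly through $\Ad$. In both cases $G$ acts by group automorphisms on a connected group $H$. Condition~(i) of Proposition~\ref{prop.versal0} (existence of a stabilizer $S$ in general position) holds by part~(a) combined with Theorem~\ref{generalGS}, or directly from Theorem~\ref{richardson1}(i) since $G$ is reductive, and this $S$ is the maximal torus found above. Condition~(ii) requires $H^S$ to be connected: for the conjugation action $G^S$ is the centralizer of the maximal torus $S$, which equals $S$ and is thus connected; for the adjoint action $\g^S=\Lie(Z_G(S))=\ls=\Lie(S)$ is a linear subspace, hence connected. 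Proposition~\ref{prop.versal0} then yields that $X$ is a versal $(G,S)$-variety.

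The step I expect to be the crux is the identification of the a~priori stabilizer $S'$ produced by Proposition~\ref{lem.catq-fibration} with the prescribed maximal $k$-torus $S$: this rests on the structural input that generic stabilizers are maximal tori together with the fact that the $(G,S)$-fibration structure depends only on the geometric conjugacy class of $S$. The only other place where the specific geometry intervenes is the connectedness of $H^S$ in part~(c), which reduces to the elementary identities $Z_G(S)=S$ and $\g^S=\Lie(S)$ for $S$ a maximal torus; these are routine, so the geometric-conjugacy matching is the real content.
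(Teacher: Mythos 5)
Your proposal is correct and follows essentially the same route as the paper: parts (a) and (b) via the stability of the actions (Corollary~\ref{stability_conj} and Kostant for the adjoint case), Proposition~\ref{lem.catq-fibration}, and the identification of generic stabilizers with maximal tori up to $\kbar$-conjugacy; part (c) via Proposition~\ref{prop.versal0}, using $Z_G(S)=S$ in the group case. The only cosmetic difference is that for $X=\g$ the paper cites Corollary~\ref{cor.versal-c}(b), whose proof is precisely your direct application of Proposition~\ref{prop.versal0} to $\g$ viewed as ${\bf G}_a^{\dim\g}$.
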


\begin{proof}
Statements (a) and (b) follow from Corollary \ref{stability_conj}
and Proposition \ref{lem.catq-fibration} by virtue of the
identification of general $G$-stabilizers with maximal tori of  $G$,
which are all conjugate over $\kbar$.

For  $X = \g$, part (c) is a special case of
Corollary~\ref{cor.versal-c}(b).

To prove part (c) for $X=G$, note that
\[ \text{$X^S$ = centralizer of $S$ in $G$ = $S$} \]
is connected. Hence Proposition~\ref{prop.versal0} applies to the
conjugation action of $G$ on itself, yielding the desired
conclusion.
\end{proof}

The following well known result plays an important r\^ole in the
sequel.

\begin{prop} [{\rm Kostant}]  \label{prop.kostant}
Let $G$ be a reductive linear algebraic group over $k$  and $\g$ be
its Lie algebra. Assume that the semisimple quotient $G/{\Rad}(G)$
is quasisplit. Then the categorical quotient map $\pi \colon \g \to
\g \catq G$ has a {\rm(}regular{\rm)} section.
\end{prop}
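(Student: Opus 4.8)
The plan is to reduce to the case where $G$ is semisimple and then to run Kostant's transverse-slice construction, being careful that every choice can be made over $k$; this last point is exactly where quasisplitness is used.

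First I would split off the radical. Write $G=(G,G)\cdot\Rad(G)$, so that as $G$-modules $\g=\mathfrak z\oplus[\g,\g]$, where $\mathfrak z=\Lie\Rad(G)$ is the centre of $\g$ (on which $G$ acts trivially) and $[\g,\g]=\Lie(G,G)$ is semisimple. The adjoint action on $[\g,\g]$ factors through $\bar G:=G/\Rad(G)$, so $k[\g]^G=k[\mathfrak z]\otimes_k k[[\g,\g]]^{\bar G}$, and $\pi$ is, up to isomorphism, the product of $\id_{\mathfrak z}$ with the adjoint quotient $\bar\pi\colon\bar\g\to\bar\g\catq\bar G$ of $\bar G$ on $\bar\g=\Lie\bar G\cong[\g,\g]$. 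A section of $\pi$ is then $\id_{\mathfrak z}$ times a section of $\bar\pi$, and since $\bar G$ is semisimple and quasisplit by hypothesis, it suffices to treat the case where $G$ itself is semisimple and quasisplit.

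Next I would produce a principal $\mathfrak{sl}_2$-triple defined over $k$. Choose a Borel subgroup $B$ of $G$ over $k$ (available since $G$ is quasisplit) and its maximal torus $T\subset B$ over $k$. The dense open $B$-orbit of regular nilpotent elements in $\Lie R_u(B)$ contains a $k$-point $e$ --- this is precisely the place where the quasisplit hypothesis is essential --- and the Jacobson--Morozov theorem over the characteristic-zero field $k$ then completes $e$ to an $\mathfrak{sl}_2$-triple $(e,h,f)$ with $h\in\Lie T(k)$ (one may take $h=2\rho^\vee$) and $f\in\g(k)$.

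Finally, I would form the Kostant slice $\Sigma=e+\g^f\subset\g$, an affine $k$-subspace with $\dim\Sigma=\rank\g=\dim(\g\catq G)$. Over $\kbar$, Kostant's theorem \cite{kostant} states that $\pi|_\Sigma\colon\Sigma\to\g\catq G$ is an isomorphism of varieties. Since $\Sigma$ and $\pi$ are defined over $k$, this $k$-morphism becomes an isomorphism after base change to $\kbar$ and is therefore an isomorphism over $k$ by faithfully flat descent; its inverse, composed with the inclusion $\Sigma\hookrightarrow\g$, is the required regular section (compare \cite{kottwitz2}). I expect the main obstacle to be the descent of the slice to $k$: over $\kbar$ the statement is classical, and the whole weight of the argument rests on producing a $k$-rational regular nilpotent, which in general exists only when $G$ has a Borel subgroup over $k$ --- without it the section may fail to exist altogether.
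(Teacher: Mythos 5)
Your proof is correct and takes essentially the same route as the paper, whose proof consists of citing Kostant's theorem for algebraically closed $k$ and Kottwitz~\cite[\S 4.3]{kottwitz2} for general $k$: Kottwitz's argument is precisely your construction of the slice $e+\g^f$ descended to $k$, with quasisplitness entering exactly where you say, namely in producing a $k$-rational regular nilpotent $e$. Your preliminary reduction splitting off $\Lie\Rad(G)$ is a correct and harmless supplement (indeed $G$ is quasisplit if and only if $G/\Rad(G)$ is, since Borel subgroups of the two groups correspond bijectively).
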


\begin{proof}
For algebraically closed base field $k$ this is a theorem of Kostant
 \cite[Theorem 0.6]{kostant}.
For an arbitrary base field $k$ of characteristic $0$, see~\cite[\S
4.3]{kottwitz2}.  \end{proof}

\begin{prop} \label{cor.steinberg1}
Let $G $ be a   connected reductive group over $k$, let $S$ be a
maximal $k$-torus of $G$, and let $X$ be a geometrically integral
$(G, S)$-variety over $k$. Assume that the semisimple group
$G/{\Rad}(G)$ is quasisplit.  Then
\begin{enumerate}
 \item[\rm(a)]
 $X$ admits a rational section;

 \item[\rm (b)]
 $k(X)$ is unirational over $k(X)^G$.
     \end{enumerate}
\end{prop}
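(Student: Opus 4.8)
The plan is to deduce the proposition formally from the results already in place, using the Lie algebra $\g$ of $G$ as the ``master'' versal object: Kostant's theorem supplies a section in the Lie-algebra case, and versality transfers it to the arbitrary $(G,S)$-variety $X$. For part~(a), I would first consider $\g$ together with the categorical quotient $\pi\colon \g \to \g \catq G$ for the adjoint action. By Proposition~\ref{prop.versal}(c), $\g$ is a versal $(G,S)$-variety, and by Proposition~\ref{prop.versal}(a) there is a dense open $V \subseteq \g \catq G$ with preimage $U = \pi^{-1}(V)$ on which $\pi|_U\colon U \to V$ is a $(G,S)$-fibration, exhibiting $U$ as a friendly open subset. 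Since $G/\Rad(G)$ is quasisplit, Kostant's theorem (Proposition~\ref{prop.kostant}) yields a regular section $s\colon \g \catq G \to \g$ of $\pi$. Restricting $s$ to $V$, its image lies in $U = \pi^{-1}(V)$ and $\pi|_U \circ (s|_V) = \id_V$, so $s|_V$ is a rational section of the $(G,S)$-fibration $\pi|_U$. Thus the versal $(G,S)$-variety $\g$ admits a rational section.

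Next I would invoke Theorem~\ref{thm1'}(a) with this versal variety: $\g$ is geometrically integral (it is an affine space) and, as just shown, admits a rational section, so every geometrically integral $(G,S)$-variety over any field extension of $k$ admits a rational section. In particular the given $X$ does, which proves~(a).

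For part~(b), let $U \subseteq X$ be a friendly open subset with $(G,S)$-fibration $\pi\colon U \to Y$. By Proposition~\ref{properties}(iv), $(Y,\pi)$ is the geometric quotient of $U$ by $G$, and $U$ is geometrically integral as a dense open subset of $X$. Part~(a) provides a rational section of $\pi$, so Lemma~\ref{pointunirat}, which applies because $G$ is connected, shows that $k(U)$ is unirational over $k(U)^G$. Since $U$ is a dense open $G$-stable subset of $X$, one has $k(U) = k(X)$ and $k(U)^G = k(X)^G$, whence $k(X)$ is unirational over $k(X)^G$, as required.

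The argument is essentially a formal assembly once the ingredients are in hand; the only step needing genuine care is the verification that Kostant's section, which a priori lives over the whole quotient $\g \catq G$, restricts over $V$ to a section of the $(G,S)$-fibration $\pi|_U$ rather than merely of $\pi$ as a bare morphism of schemes. The substantive input beyond this bookkeeping is the existence of the section in the Lie-algebra case, and it is precisely here that the hypothesis that $G/\Rad(G)$ be quasisplit is used; everything else passes through versality (Theorem~\ref{thm1'}) and the section--unirationality dictionary (Lemma~\ref{pointunirat}).
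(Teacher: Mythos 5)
Your proof is correct and follows essentially the same route as the paper: versality of $\g$ via Proposition~\ref{prop.versal}(c), Kostant's section via Proposition~\ref{prop.kostant}, transfer to $X$ via Theorem~\ref{thm1'}(a), and Lemma~\ref{pointunirat} for part~(b). The only (harmless) deviation is that the paper first observes that $\Rad(G)$, lying in every conjugate of $S$, acts trivially on $X$ and so reduces to the quasisplit semisimple group $G/\Rad(G)$, whereas you apply Proposition~\ref{prop.kostant} — which is stated for reductive $G$ with quasisplit semisimple quotient — directly, and you make explicit the small verification (left implicit in the paper) that Kostant's section restricts over the friendly open set to a section of the $(G,S)$-fibration.
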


\begin{proof}
 Going over to a friendly open set, we may assume that
we are given a $(G,S)$-fibration $\pi \colon X \to Y$. Since the
radical of $G$ lies in every conjugate of $S$, it acts trivially on
$X$.  Thus the $G$-action on $X$ descends to the semisimple group
$G/{\Rad}(G)$, and we may assume that the $k$-group $G$ is
quasisplit semisimple.

 By Proposition~\ref{prop.versal}(c), the Lie algebra
 $\g$ equipped with the adjoint action is
 a versal $(G, S)$-variety.
By Proposition~\ref{prop.kostant} the map $\pi \colon \g \to \g
\catq G$ admits a section. Statement (a) now follows from
Theorem~\ref{thm1'}(a). As for (b), by Lemma~\ref{pointunirat} it
follows from (a).
\end{proof}

Many instances of the following immediate corollary have appeared in
the literature (cf.~\cite{kottwitz}).

\begin{cor} \label{cor.hom-space}
Let $G$ be a connected reductive group over a field $k$, let $K$ be
an overfield of $k$. Let   $X$ be a $K$-variety which is a
homogeneous space of $\;G_{K}$. If the semisimple quotient
$\;G/{\Rad}(G)$ is quasisplit and if the geometric stabilizers of
the $G_{K}$-action on $X$ are maximal tori in $G_{\overline{K}}$,
then $X$ has a $K$-rational point. \qed
\end{cor}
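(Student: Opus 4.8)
The plan is to derive Corollary~\ref{cor.hom-space} directly from Proposition~\ref{cor.steinberg1}(a) by exhibiting $X$ as a $(G_K,S)$-variety over $K$ and reading off the existence of a $K$-point from the rational section. First I would set up the reduction to the base field $K$: since $X$ is a $K$-variety, there is no harm in regarding $G_K$ as our reductive group over the field $K$, and the hypothesis that $G/\Rad(G)$ is quasisplit over $k$ passes to $K$, so $G_K/\Rad(G_K)$ is quasisplit over $K$ as well.

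Next I would explain why $X$ qualifies as a $(G_K,S)$-variety. By assumption $X$ is a homogeneous space of $G_K$, so there is a single orbit, and the geometric stabilizers are all maximal tori in $G_{\overline K}$, hence all conjugate over $\overline K$ to a fixed maximal torus $S_{\overline K}$. In particular the dense open subset required in condition (b) of Theorem~\ref{generalGS} may be taken to be $X$ itself: the $G_{\overline K}$-stabilizer of each $\overline K$-point is conjugate to $S_{\overline K}$. Applying Theorem~\ref{generalGS} to $X$ over the field $K$ (with $G$ replaced by $G_K$ and $S$ a maximal $K$-torus of $G_K$), we conclude that $X$ is a $(G_K,S)$-variety.

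With this in hand I would invoke Proposition~\ref{cor.steinberg1}(a), applied over the base field $K$: since $G_K$ is connected reductive, $S$ is a maximal $K$-torus, $X$ is a geometrically integral $(G_K,S)$-variety, and $G_K/\Rad(G_K)$ is quasisplit, the proposition guarantees that $X$ admits a rational section. Concretely, choosing a friendly open set $U$ with $(G_K,S)$-fibration $U\to U/G_K$, the quotient map admits a section over a dense open subset of $U/G_K$. But for a homogeneous space the quotient $U/G_K$ is a single point $\Spec K$ (up to birational modification), so a rational section is simply a $K$-point of the total space, yielding a $K$-rational point of $X$.

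The main subtlety to handle carefully is the identification of $U/G_K$ with $\Spec K$: because $X$ is a single homogeneous space, its field of $G_K$-invariant rational functions $K(X)^{G_K}$ equals $K$, so the base $Y$ of the fibration is birational to $\Spec K$ and a rational section over a dense open of $Y$ is exactly a $K$-point of $X$. I expect this point — verifying that ``rational section'' degenerates to ``$K$-rational point'' in the homogeneous (zero-dimensional base) case — to be the only genuine step requiring care; everything else is a direct application of the machinery already assembled in Theorem~\ref{generalGS} and Proposition~\ref{cor.steinberg1}.
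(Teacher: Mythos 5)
Your proposal is correct and follows exactly the route the paper intends: the corollary is stated with an immediate \qed precisely because, as you argue, $X$ is a geometrically integral $(G_K,S)$-variety for a maximal $K$-torus $S$ (via Theorem~\ref{generalGS}, all geometric stabilizers being conjugate maximal tori), Proposition~\ref{cor.steinberg1}(a) applies over the base field $K$ (quasisplitness being stable under base change), and transitivity forces $K(X)^{G_K}=K$, so the rational section is a $K$-point. The only detail you leave tacit — that a homogeneous space of a connected group is geometrically integral, as required by Theorem~\ref{generalGS} and Proposition~\ref{cor.steinberg1} — is standard and harmless.
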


\begin{remark} \label{rem.hom-space}
Suppose that the base field $k$ is algebraically closed and $G$ is a
connected simple algebraic group defined over $k$. Let $V$ be a
faithful simple $G$-module over $k$. Theorem \ref{richardson1} tells
us that $V$ is a $(G, H)$-variety for some closed subgroup $H$ of
$G$. The list of all pairs $(G, H)$, with $H \ne \{ 1 \}$, which can
occur in this setting can be found
in~\cite[pp.\,260--262]{popov-vinberg}. Then
 the analogue of Corollary~\ref{cor.hom-space} holds,
namely every $G$-homogeneous space $X$, defined over $K$ whose
geometric stabilizers are isomorphic
 to $H\times_{k}{\overline K}$ has a $K$-point.
The proof is similar to the one above, except that instead of
Kostant's result (Proposition~\ref{prop.kostant}), one uses the
existence of a regular section for the categorical quotient map $V
\to V \catq G$, proved in~\cite{popov2}.
\end{remark}

Part (a) of the following corollary partially generalizes a result
of Steinberg~\cite{steinberg1}, who constructed a regular section of
$\pi$ in the case where $G$ is simply connected.

\begin{cor} \label{cor.steinberg2} Let $G$ be a connected reductive
group over $k$ and let $\g$ be its Lie algebra. If the semisimple
quotient $\;G/{\Rad}(G)$ is quasisplit, then
\begin{enumerate}
 \item[\rm(a)] the categorical  quotient map $\pi \colon G \to G \catq  G$
for the conjugation action of $G$ on itself admits a rational
section;

 \item[\rm(b)] $k(G)$ is unirational over $k(G)^G$;

 \item[\rm (c)] $k(\g)$ is unirational over $k(\g)^G$.
     \end{enumerate}
\end{cor}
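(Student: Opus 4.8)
The plan is to derive all three parts of Corollary~\ref{cor.steinberg2} from the machinery already assembled, treating part~(a) as the central statement and parts~(b), (c) as consequences. First I would reduce to the case where $G$ is quasisplit semisimple. Indeed, the radical $\Rad(G)$ is a central torus contained in every maximal torus of $G$, hence it acts trivially on $G$ by conjugation and trivially on $\g$ by the adjoint action; so the actions descend to $G/\Rad(G)$, which is quasisplit semisimple by hypothesis. This lets me invoke the results of the previous section without loss of generality.

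For part~(a), let $S\subset G$ be a maximal $k$-torus. By Proposition~\ref{prop.versal}(c), $G$ (with the conjugation action) is a versal $(G,S)$-variety, and by the same proposition its categorical quotient map restricts to a $(G,S)$-fibration over a dense open set, so $G$ is a $(G,S)$-variety. The key now is to apply Proposition~\ref{cor.steinberg1}(a): since $G/\Rad(G)$ is quasisplit by assumption, that proposition tells us directly that the $(G,S)$-variety $X=G$ admits a rational section of its quotient map. Concretely, the engine behind Proposition~\ref{cor.steinberg1} is Kostant's theorem (Proposition~\ref{prop.kostant}), which provides a regular section in the Lie-algebra case, transported to the group case via the versality established in Proposition~\ref{prop.versal}(c) and the transfer mechanism of Theorem~\ref{thm1'}(a). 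Thus a rational section of $\pi\colon G\to G\catq G$ exists, giving~(a).

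Parts~(b) and~(c) follow formally. By Proposition~\ref{prop.versal}(b), the quotient map identifies $k(G\catq G)$ with $k(G)^G$, so a rational section of $\pi$ is precisely a rational section in the sense required by Lemma~\ref{pointunirat}; applying that lemma (with $X=G$, which is geometrically integral with connected $G$ acting and admitting a geometric quotient on a friendly open set) yields that $k(G)$ is unirational over $k(G)^G$, which is~(b). For~(c), I would argue in exactly the same way with $\g$ in place of $G$: here Kostant's theorem gives an honest regular section of $\pi\colon\g\to\g\catq G$ directly, and Proposition~\ref{prop.versal}(b) together with Lemma~\ref{pointunirat} then yields the unirationality of $k(\g)$ over $k(\g)^G$. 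Alternatively, both~(b) and~(c) fall out in one stroke by quoting Proposition~\ref{cor.steinberg1}(b) for each of the two $(G,S)$-varieties $G$ and $\g$.

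I expect essentially no serious obstacle, since this corollary is explicitly billed as "immediate": the real content is already packaged in Proposition~\ref{cor.steinberg1} and the versality statement Proposition~\ref{prop.versal}(c). The only point demanding a moment of care is the reduction to the semisimple quasisplit case at the start---one must check that descent to $G/\Rad(G)$ preserves both the quasisplit hypothesis and the relevant function-field invariants (so that $k(G)^G=k(G)^{G/\Rad(G)}$ and similarly for $\g$), which is exactly the observation that $\Rad(G)$ sits inside every maximal torus and hence acts trivially. Once that is in hand, everything else is a direct citation of the preceding propositions.
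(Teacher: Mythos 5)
Your proposal is correct and takes essentially the same route as the paper, whose entire proof is the two-step citation you identify: Proposition~\ref{prop.versal} shows that both $G$ and $\g$ are (versal) $(G,S)$-varieties, and all three parts then follow from Proposition~\ref{cor.steinberg1}. The extra material you supply (the reduction to the quasisplit semisimple case via the triviality of the $\Rad(G)$-action, and the Kostant-plus-versality transfer through Theorem~\ref{thm1'}(a) and Lemma~\ref{pointunirat}) is precisely the content of the paper's own proofs of the cited propositions, so it is a faithful unpacking rather than a different argument.
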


\begin{proof}
By Proposition~\ref{prop.versal} both $G$ and $\g$ are
$(G,S)$-varieties. All three parts now follow from
Proposition~\ref{cor.steinberg1}.
\end{proof}

\begin{remark} \label{rem.cayley} Recall from~\cite{lpr} that
a {\em Cayley map} for an algebraic group $G$ over a field $k$ is a
$G$-equivariant birational isomorphism
\begin{equation} \label{e.Cayley-map}
\varphi \colon G \dasharrow \g,
\end{equation}
where $\g$ is the Lie algebra of $G$. Here, as before, $G$ is
assumed to act on itself by conjugation and on $\g$ by the adjoint
action. We say that $G$ is a {\em Cayley group} if it admits a
Cayley map.  In particular, the special orthogonal group $\SO_n$,
the symplectic group $\Sympl_{2n}$, and the projective linear group
$\PGL_n$ are Cayley groups for every $n \ge 1$ and every base field
$k$ of characteristic zero; see~\cite[Examples 1.11 and 1.16]{lpr}.

In the case where $G$ is a Cayley group,
Theorem~\ref{thm.steinberg0} (or equivalently,
Corollary~\ref{cor.steinberg2}(a)) has the following simpler proof.
By Proposition~\ref{prop.kostant} the categorical quotient map
$\pi_{\g}\colon \mathfrak g\to \g \catq G$ has a section $\sigma
\colon \g \catq G \to \g$. Let $\pi^{}_G\colon G\to G/\!\!/G$ be the
categorical quotient map. Then the Cayley map~\eqref{e.Cayley-map}
induces a commutative diagram
\[ \xymatrix{
G \ar@{->}[d]_{\pi^{}_G} \ar@{-->}[r]^\varphi & \g
\ar@{->}[d]_{\pi^{}_{\g}} \cr G \catq G
\ar@{-->}[r]^{\varphi/\!\!/G} & \g \catq G
\ar@{->}@/_1pc/[u]_\sigma,}
\]
where $\varphi$ and $\varphi/\!\!/G$ are birational isomorphisms.
The section $\sigma$ pulls back to a rational section of $\pi^{}_G$
via this diagram.

In the case of $G = \PGL_2$ explicitly mentioned by Grothendieck
(see the footnote in the introduction), we have ${\mathfrak
g}={\mathfrak s}{\mathfrak l}_2$ and
\begin{enumerate}
\item[(i)] $G \catq G$ is
the affine line $\bbA^1$ and $ \pi^{}_{\PGL_2}\hskip -1mm
\colon\hskip -1mm \PGL_2 \to \bbA^1$ is given by $ [g]\mapsto (\tr
g)^2/ \det g$, where $\GL_2\to\PGL_2$,
$g=\bigl(\begin{smallmatrix}a&b\\c&d
\end{smallmatrix}\bigr)
\mapsto[g]:=\bigl[\begin{smallmatrix}a&b\\c&d
\end{smallmatrix}\bigr]$, is the natural projection;
\item[(ii)]  ${\mathfrak g}/\!\!/G$ is the affine
line ${\mathbb A}^1$ and $\pi^{}_{\mathfrak g}\colon {\mathfrak
g}\to {\mathbb A}^1$ is given by $g\mapsto {\rm det}\,g$.
\end{enumerate}

In this case a Cayley map $\varphi\colon {\rm PGL}_2\dasharrow
{\mathfrak s}{\mathfrak l}_2$
 is given by
\[
[g]\mapsto \frac {2}{\tr g} g  - I_2, \] where $I_2$ is the $2\times
2$ identity matrix (see \cite[Example 1.11]{lpr}), the map
$\varphi/\!\!/G\colon {\mathbb A}^1\dasharrow {\mathbb A}^1$ is
given by $t\mapsto -1+4/t $, and the section $\sigma\colon {\mathbb
A}^1\to {\mathfrak s}{\mathfrak l}_2$ is given by $t\mapsto
\bigl(\begin{smallmatrix}0&1\\
-t&0\end{smallmatrix}\bigr)$. The above strategy then leads to the
rational section
\[\bbA^1
\dasharrow \PGL_2,\qquad t \mapsto
\begin{bmatrix}  1  &   1 \\
                         1 - 4/t &   1
\end{bmatrix}.
\]
\end{remark}

\subsection{The generic torus}
\label{thegenerictorus}

The conjugation action of a connected reductive group $G$ on itself
leads to another construction, that of the generic torus.
 Let $S$ be a maximal $k$-torus of the connected reductive $k$-group $G$ and
let $N$ be the normalizer of $S$ in $G$.
 Consider the natural map
$\varphi \colon S \times_k (G/S)  \to G \times_k (G/N) $ given by
$(s, gS) \mapsto (gsg^{-1}, gN)$. Its image $H \subset G \times_k
(G/N)$ is closed (see \cite[p.\,10]{humphreysconj}). The point
$\varphi(s,g)$ defines the maximal torus $gSg^{-1}$ and the point
$gsg^{-1}$ in that torus. The second projection $\pi\colon H \to
G/N$ gives $H$ the structure of a torus over $G/N$, and the family
of fibres of this projection
  is the family of maximal tori in $G$.
To be more precise, the morphism  $H \hookrightarrow G \times_k
(G/N)$ is a morphism of $G/N$-group schemes, where $H$ is a
$(G/N)$-torus and $ G \times_{k} (G/N)$ is the constant
$(G/N)$-group scheme induced by base change from $G \to \Spec k$.
The $(G/N)$-torus $H$ is thus a maximal torus in the (fibrewise
connected) reductive $(G/N)$-group $G \times_k G/N$. The variety
$G/N$ is the ``variety of maximal tori in $G$". Given any field
extension $L/k$ and any maximal $L$-torus $S$ in $G_{L}$, there
exists an $L$-point $s \in (G/N)(L)$ such that $\pi^{-1}(s)=S$. The
actions of $G$ by conjugation on itself and by left translation on
$G/N$ induce a $G$-action on $G\times_k G/N$ with respect to which
$H$ is stable and $\pi$ is $G$-equivariant. Since $G(L)$ is dense in
$G$, this implies that the set of $L$-points of $G/N$ whose fibre
under $H \to G/N$ is isomorphic to a given $L$-torus is Zariski
dense in $G/N$. The field $k(G/N)$ is {\it denoted} $K_{\rm gen}$.
The {\it generic torus} $T_{\rm gen}$ is by definition the generic
fibre of $\pi$. For the details of this
construction,~see~\cite[\S4.1--4.2]{voskresenskii}.

Assume that $G$ is split over $k$ and $S$ is a split maximal torus
of $G$. Then the $K_{\rm gen}$-torus $T_{\rm gen}$ is split by the
extension $k(G/S)$ of $k(G/N)$, which is a Galois extension with
Galois group the Weyl group $W=N/S$. If,  moreover, $G$ is simple,
simply connected of type ${\sf R}$, then the character lattice of
the generic torus is the weight $W$-lattice $P({\sf R})$. For proofs
of these assertions, see~\cite{voskresenskii2}.

\vskip -7mm

\subsection{Equivalent versions of the purity questions}
We consider the purity Questions~\ref{q1}(a) and (b) from the
Introduction.

\begin{thm} \label{thm.reduction}
Let $G$ be a connected reductive  group over $k$ and let $S$ be a
maximal $k$-torus of $G$. Then the following three conditions are
equivalent:

\begin{enumerate}
\item[\rm (a)]
$k(G)/k(G)^G$ is pure {\rm(}respectively, stably pure{\rm)};

\item[\rm(b)]
$k(\g)/k(\g)^G$ is pure {\rm(}respectively, stably pure{\rm)};

 \item[\rm(c)] for every field extension $F/k$  and  every integral $(G, S)$-variety $X$
over $F$, the extension $F(X)/F(X)^G$ is pure {\rm(}respectively,
stably pure{\rm)}.

\end{enumerate}

The following two conditions are equivalent and are implied by the
previous conditions:

\begin{enumerate}
 \item[\rm (d)] for every field extension $F/k$ and every maximal $F$-torus $T$
of $\,G_F$, the $F$-va\-ri\-ety $G_F/T$ is rational
{\rm(}respectively, stably rational{\rm)} over $F$;

 \item[\rm (e)] the $K_{\rm gen}$-variety $G_{K_{\rm gen}}/T_{\rm gen}$ is
$K_{\rm gen}$-rational {\rm(}respectively, stably rational{\rm)}.
\end{enumerate}

If $\;G$ is quasisplit, then all five conditions are equivalent.
\end{thm}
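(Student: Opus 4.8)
The plan is to organize the proof around the equivalences implied by the machinery developed in Sections 2--4, exploiting the fact that both $G$ and $\g$ are versal $(G,S)$-varieties (Proposition~\ref{prop.versal}(c)) when $S$ is a maximal $k$-torus. The key observation is that versality turns the purity question for the two specific actions into a question about \emph{all} $(G,S)$-varieties simultaneously, and then into a question about a single geometric object, the generic torus. First I would establish the cycle $\text{(c)}\Rightarrow\text{(a)},\text{(b)}$ as the easy direction: by Proposition~\ref{prop.versal}(a) both $G$ and $\g$ contain friendly open subsets admitting $(G,S)$-fibrations, so they are themselves integral $(G,S)$-varieties over $k$ (take $F=k$), and condition (c) applied to these particular $X$ gives exactly (a) and (b).

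For the reverse implications $\text{(a)}\Rightarrow\text{(c)}$ and $\text{(b)}\Rightarrow\text{(c)}$, I would invoke Theorem~\ref{thm1'}(b), which is tailor-made for this: since $G$ (resp.\ $\g$) is a versal $(G,S)$-variety and the homogeneous space $G/S$ is connected (being a quotient of the connected group $G$), the hypothesis that $k(G)/k(G)^G$ (resp.\ $k(\g)/k(\g)^G$) is pure (resp.\ stably pure) propagates to \emph{every} geometrically integral $(G,S)$-variety $X$ over every field extension $F/k$. This yields (c). Combining, (a), (b), (c) are mutually equivalent. The connectedness of $G/S$ needed to apply Theorem~\ref{thm1'}(b) is the only hypothesis to check, and it is immediate.

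The heart of the matter is relating (c) to the geometric conditions (d) and (e). For $\text{(d)}\Leftrightarrow\text{(e)}$, I would use the description of the generic torus in \S\ref{thegenerictorus}: the variety $G/N$ is the variety of maximal tori, $T_{\rm gen}$ is the generic fibre of $H\to G/N$ over the generic point $\operatorname{Spec} K_{\rm gen}$, and the family $G\times^{}_k(G/N)\supset H\to G/N$ has $G_F/T$ (for $T=\pi^{-1}(s)$ with $s\in(G/N)(F)$) as its fibres. Since $L$-points with a prescribed torus are Zariski dense in $G/N$, I would apply Lemma~\ref{lem.fibreing}(b) to the total space $X$ of the fibration $(G\times G/N)/H$-type family parametrizing the $G_F/T$: rationality (resp.\ stable rationality) of the generic fibre $G_{K_{\rm gen}}/T_{\rm gen}$ over $K_{\rm gen}=k(G/N)$ spreads out to rationality (resp.\ stable rationality) of $G_F/T$ for all $F$-points of a dense open set, giving $\text{(e)}\Rightarrow\text{(d)}$; the converse is the special case of (d) at the generic point.

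For $\text{(c)}\Rightarrow\text{(d)}$, the point is that $G_F/T$ is itself a $(G_F,T)$-variety — indeed a homogeneous one — and its field of $G$-invariant functions is $F$ (the action being transitive), so purity of $F(G_F/T)/F(G_F/T)^G=F(G_F/T)/F$ is literally $F$-rationality of $G_F/T$; thus condition (c) specializes directly to (d). \textbf{The main obstacle} I anticipate is the final clause, that \emph{all five} conditions coincide when $G$ is quasisplit, i.e.\ establishing $\text{(d)}\Rightarrow\text{(a)}$ or (b) to close the loop. Here I would use Corollary~\ref{cor.steinberg2} together with Proposition~\ref{cor.steinberg1}: quasisplitness of $G/\Rad(G)$ guarantees (via Kostant's section, Proposition~\ref{prop.kostant}, fed through Theorem~\ref{thm1'}(a)) that every $(G,S)$-variety admits a rational section, so the generic fibre of the quotient map $G_F/T\to \operatorname{Spec} F$ has an $F$-point. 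This rigidifies the homogeneous space: the rational section lets one identify $G_F/T$-type fibres with a fixed model, converting the \emph{a priori} weaker rationality statement in (d) back into the purity of $k(G)/k(G)^G$ via the versal comparison. Making this identification precise — ensuring the section-induced trivialization is compatible with the versal fibration so that rationality of the fibre transfers to purity of the total invariant extension — is the delicate step, and I would lean on Lemma~\ref{pointunirat} and the functoriality in Proposition~\ref{propGSfibration} to carry it through.
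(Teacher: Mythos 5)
Your proposal is correct and follows the paper's proof essentially step for step: (a)$\Leftrightarrow$(b)$\Leftrightarrow$(c) via the versality of $G$ and $\g$ as $(G,S)$-varieties (Proposition~\ref{prop.versal}) fed into Theorem~\ref{thm1'}(b), (d)$\Leftrightarrow$(e) via Lemma~\ref{lem.fibreing}(b) applied to the quotient of the family $G\times_k(G/N)$ by the maximal torus $H$ over $G/N$, (c)$\Rightarrow$(d) because $G_F/T$ is a $(G,S)$-variety whose invariant field is $F$, and, for quasisplit $G$, (d)$\Rightarrow$(c) via the rational section supplied by Proposition~\ref{cor.steinberg1}(a). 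The one step you flag as delicate is in fact immediate in the paper and needs none of the extra machinery you invoke: the section gives a $K$-point $s(\eta)$ of the generic fibre $X_\eta$ (where $K=k(X)^G$), its stabilizer $T$ is a maximal $K$-torus since the geometric stabilizers of a $(G,S)$-fibration are conjugates of $S_{\overline{k}}$, hence $X_\eta\cong G_K/T$ with $K(G_K/T)=k(X)$, and applying (d) with $F=K$ finishes the argument directly, with no appeal to Lemma~\ref{pointunirat}, Proposition~\ref{propGSfibration}, or any further versal comparison.
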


\begin{proof} By Proposition \ref{prop.versal},
both $G$ and $\g$ are versal $(G, S)$-varieties, where $S$ is a
maximal $k$-torus of $G$. The equivalence of (a), (b) and (c) now
follows from Theorem~\ref{thm1'}(b).

The implication ${\rm(d)}\Rightarrow\!{\rm(e)}$ is clear. Let us
prove that ${\rm(e)}\Rightarrow{\rm(d)}$. By assumption, the generic
fibre of the quotient\footnote{This quotient exists as affine
$(G/N)$-scheme. This is a consequence of the fact that $H$ is a
(fibrewise connected) reductive $(G/N)$-group, the scheme $G\times_k
(G/N)$ is affine over $G/N$, and  $G/N$ is a scheme of
characteristic zero. For our purposes, it is enough to know this
over a dense open subset of $G/N$, and that is a consequence of the
existence of the quotient over the field $k(G/N)$.} of the
$(G/N)$-group scheme $G \times_k (G/N)$ by the maximal torus $H$
over $G/N$ is $k(G/N)$-rational.

By Lemma~\ref{lem.fibreing}(b) it follows that over a dense Zariski
open set of $G/N$, all fibres are rational. The $F$-torus $T$ may be
represented by an $F$-point of this open set. Hence the result.

The implication ${\rm(c)}\Rightarrow\!{\rm(d)}$ is immediate, since
$G_F/T$ is a $(G, S)$-variety over $F$.

 Let us show that
${\rm(d)}\Rightarrow\!{\rm(c)}$ if $G$ is quasisplit. Let $X$ be a
$(G, S)$-variety. We may assume that $X$ is a $(G,S)$-fibration.  By
Proposition~\ref{cor.steinberg1}(a) the quotient map $\pi \colon X
\to Y=X/G$ has a rational section $s \colon X/G \dasharrow X$. Let
$K=k(Y)$ be the function field of $Y$. Let $\eta$ denote the generic
point of $Y$ and let $X_{\eta}$ be the fibre of $\pi$ over $\eta$.
The $K$-variety $X_{\eta}$ is a $G_{K}$-homogeneous space with a
$K$-point, namely $s(\eta) \in X_{\eta}(K)$. Let $T \subset G_{K}$
be the stabilizer of  $s(\eta)$. This is a maximal $K$-torus in
$G_{K}$. The
 $G_{K}$-homogeneous
space $X_{\eta}$ is isomorphic to $G_{K}/T$.
 By our construction, $K(G_K/T) = K(X_{\eta}) = k(X)$ and $K = k(X)^G$.
By (d), $K(G_K/T)$ is rational (respectively, stably rational) over
$K$. Hence, $k(X)$ is rational (respectively, stably rational) over
$k(X)^G$.
\end{proof}

\begin{remark}\label{rem.isogeny} {}From the equivalence
between (a) and (b) we conclude that the answers to
Questions~\ref{q1}(a) and (b) stated in the Introduction depend
only on the isogeny class of the group~$G$.
\end{remark}

\begin{remark} \label{rem.bruhat}
If $G_{F}$ is split and the maximal $F$-torus $T \subset G_{F}$ is
also split then it easily follows from the Bruhat decomposition of
$G$ that the homogeneous space $G_F/T$  which appears in part (d) of
Theorem~\ref{thm.reduction} is $F$-rational; cf. the last paragraph
on p.~219 in~\cite{borel}. For general $T \subset G_{F}$, we shall
see in this paper that the quotient $G_{F}/T$ need not be
$F$-rational.
\end{remark}

\section{Reduction to the case where the group $G$ is simple
and simply connected}

In the previous section we have seen that the answers to
Questions~\ref{q1}(a) and (b) stated in the Introduction are the
same. Moreover, these answers remain unchanged if we replace $G$ by
an isogenous group.  In this section we  reduce these questions  for
a general connected split reductive group $G$ to the case where $G$
is split, simple and simply connected.

\begin{prop} \label{prop.reductive}
Let $G$ be a connected reductive algebraic group over $k$, $Z$ be a
central $k$-subgroup of $G$ and $\widetilde{G} = G/Z$. Denote the
Lie algebras of $\;G$ and $\widetilde{G}$ by $\g$ and
$\widetilde{\g}$ respectively. Then the following properties are
equivalent:
\begin{enumerate}
\item[\rm(i)] $k(\g)/k(\g)^G$ is pure
{\rm(}respectively, stably pure{\rm)};
\item[\rm(ii)] $k(\widetilde{\g})/k(\tilde{\g})^{\widetilde{G}}$ is
pure {\rm(}respectively, stably pure{\rm)}.
\end{enumerate}
\end{prop}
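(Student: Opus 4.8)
The plan is to reduce the comparison between $\g$ and $\widetilde\g$ to a comparison of the corresponding generic tori, exploiting the fact already established in Theorem~\ref{thm.reduction} that purity (respectively stable purity) of $k(\g)/k(\g)^G$ is equivalent to rationality (respectively stable rationality) of $G_{K_{\rm gen}}/T_{\rm gen}$, and that this invariant depends only on the isogeny class of $G$ (Remark~\ref{rem.isogeny}). Since $Z$ is central, the quotient map $G \to \widetilde G = G/Z$ is a central isogeny, and $\g$ and $\widetilde\g$ are isomorphic as $G$-modules for the adjoint action whenever $Z$ is finite; the genuinely new phenomenon occurs only when $Z$ contains a positive-dimensional subtorus of the radical. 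Thus the first step is to separate the central subgroup $Z$ into its connected component $Z^\circ$ (a central torus) and the finite part. For the finite part the equivalence is essentially immediate from Remark~\ref{rem.isogeny}, since $G$ and $G/Z_{\rm fin}$ are isogenous and have the same Lie algebra up to a canonical isomorphism respecting the adjoint action.

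The substantive case is therefore a central torus quotient, and here I would work directly with the generic tori. Let $S$ be a maximal $k$-torus of $G$; its image $\widetilde S := S/Z^\circ$ is a maximal $k$-torus of $\widetilde G$, and $N_{\widetilde G}(\widetilde S) = N_G(S)/Z^\circ$, so the Weyl groups $W = N_G(S)/S$ and $\widetilde W = N_{\widetilde G}(\widetilde S)/\widetilde S$ coincide as abstract groups acting on the respective cocharacter lattices. The key structural input is the short exact sequence of $W$-lattices
\begin{equation*}
0 \to (Z^\circ)_{*} \to S_{*} \to \widetilde S_{*} \to 0,
\end{equation*}
in which $(Z^\circ)_{*}$ carries the trivial $W$-action (since $Z^\circ$ is central, $W$ fixes it pointwise) and is therefore a permutation, indeed a trivial, $W$-lattice. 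The generic torus $T_{\rm gen}$ of $G$ and the generic torus $\widetilde T_{\rm gen}$ of $\widetilde G$ have cocharacter lattices $S_{*}$ and $\widetilde S_{*}$ respectively, over the same field $K_{\rm gen} = k(G/N) = k(\widetilde G/\widetilde N)$. Hence $G_{K_{\rm gen}}/T_{\rm gen}$ and $\widetilde G_{K_{\rm gen}}/\widetilde T_{\rm gen}$ differ precisely by a split torus factor of dimension $\dim Z^\circ$, and the plan is to show that adjoining or removing such a split factor changes neither rationality nor stable rationality over $K_{\rm gen}$.

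The main obstacle, as I see it, lies in passing cleanly between the geometric statement about the homogeneous spaces $G/T$ and the lattice-theoretic statement, and in handling the \emph{pure} (as opposed to merely stably pure) case. For stable purity the argument is clean: the product of $G_{K_{\rm gen}}/T_{\rm gen}$ with the split torus $(Z^\circ)_{K_{\rm gen}}$ is birational over $K_{\rm gen}$ to $\widetilde G_{K_{\rm gen}}/\widetilde T_{\rm gen}$ times an affine space, because the fibration $G \to \widetilde G$ with fibre $Z^\circ$ pulls back to a split torus bundle, and a split torus is $K_{\rm gen}$-rational; stable rationality is insensitive to such a product with a rational variety, and one invokes the equivalence ${\rm(e)}\Leftrightarrow{\rm(a)}$ of Theorem~\ref{thm.reduction} in both directions. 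For the \emph{pure} case I would instead argue directly on the homogeneous spaces, establishing a $K_{\rm gen}$-birational isomorphism $G_{K_{\rm gen}}/T_{\rm gen} \cong \widetilde G_{K_{\rm gen}}/\widetilde T_{\rm gen} \times \bbA^{d}$ where $d = \dim Z^\circ$, which comes from the fact that $G \to \widetilde G$ is a $Z^\circ$-torsor and $Z^\circ$ is a split (hence special, hence with trivial torsors) torus, so the torsor is birationally trivial; this yields purity in both directions simultaneously. The care required is to verify that the base change to $K_{\rm gen}$ and the choice of compatible maximal tori $S, \widetilde S$ are genuinely functorial, so that the birational isomorphism respects the quotient maps and the identification $K = k(X)^G$ of Theorem~\ref{thm.reduction} is preserved.
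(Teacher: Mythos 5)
Your argument has a genuine gap at its key step: you invoke the equivalence (a)$\Leftrightarrow$(e) of Theorem~\ref{thm.reduction} ``in both directions,'' but that theorem only gives (a)$\Rightarrow$(e) for a general connected reductive group; the converse (e)$\Rightarrow$(a) is asserted there only when $G$ is \emph{quasisplit} (its proof goes through Proposition~\ref{cor.steinberg1}, i.e.\ through Kostant's section, which needs quasisplitness). Proposition~\ref{prop.reductive} is stated, and is used in the paper (with $Z$ the radical), for an arbitrary connected reductive $G$ over $k$, so your route proves the statement only under an extra hypothesis the proposition does not make. Relatedly, your lattice bookkeeping tacitly assumes the split case: for non-split $G$ the central torus $Z^\circ$ need not be split (so ``split torus factor'' and ``the torsor $G\to\widetilde G$ is birationally trivial because $Z^\circ$ is split, hence special'' are unjustified), and the Galois action on the character lattice of $T_{\rm gen}$ is not simply through $W$.

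There is also a concrete error in the geometric picture, independent of splitness: since $Z^\circ\subset T$ for every maximal torus $T$, the natural map $G/T\to\widetilde G/\widetilde T$ is an \emph{isomorphism} — both spaces have dimension $\dim G-\dim T$ — so the two homogeneous spaces do not ``differ by a split torus factor,'' and your proposed birational isomorphism $G_{K_{\rm gen}}/T_{\rm gen}\cong \widetilde G_{K_{\rm gen}}/\widetilde T_{\rm gen}\times\bbA^{d}$ is dimensionally impossible. (This particular slip is harmless in the sense that the truth is stronger, but it shows the torsor heuristic was mis-applied.) Your treatment of the finite part of $Z$ via Remark~\ref{rem.isogeny} is fine — indeed for finite $Z$ the two field extensions coincide on the nose. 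For the torus part, however, the paper's actual proof is both shorter and needs no rationality input at all: since $Z$ acts trivially on $\g$, the adjoint action descends to $\widetilde G$, so $\g$ and $\widetilde\g$ are both finite-dimensional linear $\widetilde G$-modules; by Corollary~\ref{cor.versal-c}(b) each is a versal $(\widetilde G,\widetilde S)$-variety for a maximal torus $\widetilde S$ of $\widetilde G$, and Theorem~\ref{thm1'}(b) then transfers purity (and stable purity) between any two versal $(\widetilde G,\widetilde S)$-varieties, in both directions, with no quasisplitness assumption and no need to decompose $Z$ into its connected and finite parts. Replacing your generic-torus step by this versality argument would repair the proof.
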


\begin{proof} We may assume without loss of generality that $Z$ is
the centre of $G$.

Since $Z$ acts trivially on $\g$, the adjoint action of $G$ on $\g$
descends to a $\widetilde{G}$-action, making $\g$ into a
$(\widetilde{G}, \widetilde{S})$-variety, where $\widetilde{S}$ is a
maximal torus in the semisimple group $\widetilde{G}$. Thus both
$\g$ and $\widetilde{\g}$ are  $(\widetilde{G},
\widetilde{S})$-varieties. The action of $\widetilde{G}$ on each of
$\g$ and $\widetilde{\g}$ is linear. By Corollary~\ref{cor.versal-c}
both are versal.  The desired conclusion now follows from
Theorem~\ref{thm1'}(b).
\end{proof}

Taking $Z$ to be the radical (connected centre) of $G$, we see that
Proposition~\ref{prop.reductive} reduces Questions~\ref{q1} to the
case where $G$ is semisimple. Proposition~\ref{prop.semisimple}
below further reduces it to the case where $G$ is  simple.

\begin{lem} \label{lem.semisimple}
Let $G = G_1 \times \dots \times G_n$, where each $G_i$ is a
connected reductive $k$-group.  Denote the Lie algebra of $G_i$ by
$\g_i$ and the Lie algebra of $\;G$ by $\g = \g_1 \times \dots
\times \g_n$.

\begin{enumerate}
\item[\rm(a)] If $k(\g_i)/k(\g_i)^{G_i}$ is   pure
{\rm(}respectively, stably pure{\rm)} for every $i = 1, \dots, n$,
then $k(\g)/k(\g)^G$ is  pure {\rm(}respectively, stably pure{\rm)}.

\item[\rm(b)] If  each $G_{i}$ is split and
$k(\g)/k(\g)^G$ is stably pure,  then $k(\g_i)/k(\g_i)^{G_i}$ is
stably pure for every $i = 1, \dots, n$.
\end{enumerate}
\end{lem}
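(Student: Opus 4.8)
The plan is to prove both directions by exploiting the product structure of the actions, using the versality framework developed earlier. For part (a), the key observation is that $k(\g)^G = k(\g_1)^{G_1} \otimes \cdots \otimes k(\g_n)^{G_n}$ at the level of function fields, reflecting the fact that the categorical quotient of a product is the product of the categorical quotients. More precisely, since $\g = \g_1 \times \cdots \times \g_n$ with the $G$-action being the product of the componentwise $G_i$-actions, I would first identify a friendly open set for $G$ acting on $\g$ as a product of friendly open sets $U_i \to Y_i$ for each $G_i$ acting on $\g_i$ (each a $(G_i, S_i)$-fibration by Proposition~\ref{prop.versal}). The product map $U_1 \times \cdots \times U_n \to Y_1 \times \cdots \times Y_n$ is then a $(G, S)$-fibration with $S = S_1 \times \cdots \times S_n$. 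This gives $k(\g) = k(\g_1) \cdots k(\g_n)$ as a compositum over $k$, with the invariant subfield being the compositum $k(Y_1) \cdots k(Y_n) = k(\g_1)^{G_1} \cdots k(\g_n)^{G_n}$. If each extension $k(\g_i)/k(\g_i)^{G_i}$ is pure (respectively stably pure), then purity (respectively stable purity) is preserved under taking such composita of fields that are free over the base, yielding the conclusion.

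For part (b), where each $G_i$ is split and we assume $k(\g)/k(\g)^G$ is stably pure, the natural strategy is to use the versality machinery, specifically Theorem~\ref{thm1'}(b), together with the splitting hypothesis. The idea is that each $\g_i$, viewed through the embedding of $G_i$ into $G$, should itself be realized as (a friendly open subset of) a versal $(G_i, S_i)$-variety, so that stable purity of $k(\g)/k(\g)^G$ descends to stable purity of $k(\g_i)/k(\g_i)^{G_i}$. Since $G_i$ is split, I would want to exhibit $\g_i$ as a $(G_i, S_i)$-variety over $k$ that arises by base change or specialization from the versal $(G, S)$-variety $\g$. Concretely, fixing a $k$-point in the factors $Y_j$ for $j \ne i$ (which exist because the groups are split, so the relevant quotient maps admit rational sections or at least $k$-points by Corollary~\ref{cor.steinberg2} and the quasisplit hypothesis), one restricts the $(G,S)$-fibration over $\prod_j Y_j$ to the slice $\{pt\} \times \cdots \times Y_i \times \cdots \times \{pt\}$, recovering $\g_i$ with its $G_i$-action. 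Stable purity, being a property transferred along such specializations via Lemma~\ref{lem.fibreing}(b), then passes from the total extension to the factor extension.

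The main obstacle I anticipate is in part (b), namely ensuring that the stable purity descends correctly to the individual factors rather than merely to the product. The difficulty is that stable purity of a compositum $E_1 \cdots E_n / F_1 \cdots F_n$ does not, in general formal terms, force stable purity of each $E_i/F_i$; one genuinely needs the splitting hypothesis and the existence of $k$-rational base points to perform the specialization to a single factor. The asymmetry between parts (a) and (b)---that (a) holds unconditionally while (b) requires each $G_i$ split---reflects exactly this: for (a) one builds up purity from pieces, which is formal, whereas for (b) one must extract a single piece by fixing rational points in the other factors' quotients, and such points are guaranteed only in the split (or quasisplit) setting. This also explains the remark in the introduction that the converse for ``pure" (as opposed to ``stably pure") is not known: the specialization argument via Lemma~\ref{lem.fibreing}(b) preserves stable rationality of fibres cleanly, but controlling honest rationality of a specialized fibre from rationality of the generic fibre is more delicate, since rationality is not as robust under specialization.
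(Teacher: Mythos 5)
Your part (a) is essentially the paper's own argument: the categorical quotient map for $G$ is the product $\pi_1\times\cdots\times\pi_n$ of the quotient maps for the $G_i$, and (stable) rationality of each $\g_i$ over $\g_i\catq G_i$ multiplies up to the product. Your phrasing via ``composita of fields free over the base'' is vaguer than necessary --- the clean statement is the factorwise birational decomposition $\g\sim(\g\catq G)\times\bbA^d$ over $\g\catq G$ --- but the content is the same and correct.

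Part (b), however, has a genuine gap at its central step. When you fix $k$-points $y_j\in Y_j$ for $j\ne i$ and restrict the fibration to the slice, you do \emph{not} ``recover $\g_i$ with its $G_i$-action'': the restriction is $\g_i\times\prod_{j\ne i}\pi_j^{-1}(y_j)$, and each extra factor $\pi_j^{-1}(y_j)$ is a homogeneous space $G_j/T_j$ for some maximal $k$-torus $T_j$ depending on $y_j$, in general not split. Stable purity of the slice's function field over $k(\g_i)^{G_i}$ (which Lemma~\ref{lem.fibreing}(b) does give you, for suitably chosen $y_j$) descends to stable purity of $k(\g_i)/k(\g_i)^{G_i}$ only if these extra factors are themselves (stably) $k$-rational --- and for general $y_j$ that is precisely the hard question this paper studies, with a \emph{negative} answer outside types ${\sf A}_n$, ${\sf C}_n$, ${\sf G}_2$ (Theorem~\ref{thm1.modified}(b)). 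Your appeal to Corollary~\ref{cor.steinberg2} yields only $k$-points on these fibres, not their rationality; and since $k$-points are Zariski dense in each $Y_j$ regardless of splitness, your diagnosis of where the split hypothesis enters is also off. The gap is repairable along your lines: because $G_j$ is split, the image in $Y_j$ of the regular elements of a split Cartan subalgebra of $\g_j$ is Zariski dense, so one may choose $y_j$ inside the good open set of Lemma~\ref{lem.fibreing}(b) with $\pi_j^{-1}(y_j)\cong G_j/S_j$, $S_j$ a split maximal torus, which is $k$-rational by the Bruhat decomposition (Remark~\ref{rem.bruhat}); then the slice is birational to $\g_i\times\bbA^m$ over $k(\g_i)$ and the descent goes through. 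The paper avoids specialization entirely: it applies versality of $\g$ (Proposition~\ref{prop.versal}(c)) together with Theorem~\ref{thm1'}(b) directly to the hand-built $(G,S)$-variety $X=\g_1\times(G_2/S_2)\times\cdots\times(G_n/S_n)$, whose quotient is $\g_1\catq G_1$ and which is rational over $\g_1$ for exactly the Bruhat reason just stated --- that is the true point at which splitness is used, and the step your writeup is missing.
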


\begin{proof} Denote the categorical
quotient map for the adjoint $G_i$-action by $\pi_i \colon \g_i \to
\g_i  \catq G_i$. Then the categorical quotient map for the adjoint
$G$-action is
\[ \pi = \pi_1 \times \dots \times \pi_n \colon  \g =  \g_1 \times \dots
\times  \g_n \rightarrow (\g_1  \catq  G_1) \times \dots \times
(\g_n  \catq G_n) = \g  \catq  G \, . \] Clearly if each $\g_i$ is
rational (respectively, stably rational) over $\g_i  \catq  G_i$
then $\g$ is rational (respectively, stably rational) over $\g \catq
G$. This proves part (a).

 Let us prove (b).
Suppose that $k(\g)/k(\g)^G$ is stably pure. By symmetry it suffices
to show that $k(\g_1)/k(\g_1)^{G_1}$ is stably pure.

Let $S_i$ be a split maximal torus in $G_i$. Then $S = S_1 \times
\dots \times S_n$ is a maximal torus in $G$.  Consider the $(G,
S)$-variety $X = \g_1 \times (G_2/S_2) \times \dots (G_n/S_n)$,
where the $G$-action on $X$ is the direct product of the adjoint
action of $G_1$ on $\g_1$ and the left translation action $G_i$ on
$G_i/S_i$. Clearly $X  \catq  G = \g_1 \catq  G_1$ and the quotient
map for $X$ is the composition of the projection $\pr_1 \colon X \to
\g_1$ to the first component and the quotient map $\pi_1 \colon \g_1
\to \g_1  \catq  G_1$:
\[
X = \g_1 \times (G_2/S_2) \times \dots \times (G_n/S_n)
\xrightarrow{{\rm \pr}_1}
  \g_1 \xrightarrow{\pi_1}
  \g_1 {\catq} G_1.
  \]
Because $G_{i}$ and $S_{i}$ are both split, each $G_i/S_i$ is
rational over $k$; see Remark~\ref{rem.bruhat}. Hence, $X$ is
rational over $\g_1$.

By Proposition~\ref{prop.versal}(c) $\g$ is a versal $(G,
S)$-variety. Since $k(\g)/k(\g)^G$ is stably pure,
Theorem~\ref{thm1'}(b) tells us that $X$ is stably rational over $X
\catq G \cong  \g_1  \catq G_1$. Consequently, $\g_1$ is stably
rational over $\g_1/G_1$, i.e., $k(\g_1)$ is stably pure over
$k(\g_1)^{G_1}$, as desired.
\end{proof}

\begin{prop} \label{prop.semisimple}
Let $G $ be a split semisimple group over $k$.  Let $G_1, \dots,
G_n$ denote  the simple components of the simply connected cover of
$G$. Denote the Lie algebras of $G, G_1, \dots, G_n$ by $\g, \g_1,
\dots, \g_n$ respectively.

\begin{enumerate}
\item[\rm(a)] The following properties are equivalent:
    \begin{enumerate}
    \item[\rm(i)] $k(\g)/k(\g)^G$ is stably pure;
\item[\rm(ii)]
$k(\g_i)/k(\g_i)^{G_i}$ is stably pure for every $i$.
\end{enumerate}
\item[\rm(b)] If $k(\g_i)/k(\g_i)^{G_i}$ is pure for
every $i$, then $k(\g)/k(\g)^G$ is pure.
\end{enumerate}
\end{prop}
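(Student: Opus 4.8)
The plan is to deduce the statement from the two reduction results already in hand: Proposition~\ref{prop.reductive}, which shows that (stable) purity of $k(\g)/k(\g)^G$ depends only on the isogeny class of $G$, and Lemma~\ref{lem.semisimple}, which relates the question for a direct product to the questions for its factors. The only genuine point requiring attention is that $G$ itself need not be a direct product of simple groups; so the first step must be to pass to the simply connected cover $\widetilde{G} = G_1 \times \dots \times G_n$, which \emph{is} such a product, and then transport the conclusion back to $G$ through the isogeny.

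First I would fix the isogeny. There is a central isogeny $p \colon \widetilde{G} \to G$ with finite kernel $Z$, so $G \cong \widetilde{G}/Z$. Because $\Char(k) = 0$ the subgroup $Z$ is \'etale, whence $\Lie(Z) = 0$ and the differential $dp$ identifies $\widetilde{\g} = \g_1 \times \dots \times \g_n$ with $\g$; since $Z$ is central this identification is compatible with the adjoint actions (the $\widetilde{G}$-action on $\widetilde{\g}$ factoring through $G$). This is exactly the situation to which Proposition~\ref{prop.reductive} applies, taking its group to be $\widetilde{G}$ and its central subgroup to be $Z$: that proposition then gives that $k(\widetilde{\g})/k(\widetilde{\g})^{\widetilde{G}}$ is (stably) pure if and only if $k(\g)/k(\g)^G$ is (stably) pure. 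Finally I note that, $G$ being split, each simple factor $G_i$ of $\widetilde{G}$ is split, so the splitness hypothesis of Lemma~\ref{lem.semisimple}(b) is met.

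For part (a) I would then chain the two results in both directions. If each $k(\g_i)/k(\g_i)^{G_i}$ is stably pure, Lemma~\ref{lem.semisimple}(a) makes $k(\widetilde{\g})/k(\widetilde{\g})^{\widetilde{G}}$ stably pure, and Proposition~\ref{prop.reductive} transfers this to $k(\g)/k(\g)^G$; conversely, if $k(\g)/k(\g)^G$ is stably pure, Proposition~\ref{prop.reductive} makes $k(\widetilde{\g})/k(\widetilde{\g})^{\widetilde{G}}$ stably pure, and Lemma~\ref{lem.semisimple}(b) (using the splitness of the $G_i$) returns stable purity of each $k(\g_i)/k(\g_i)^{G_i}$. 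For part (b) I would run only the first of these two chains with ``pure'' in place of ``stably pure'': Lemma~\ref{lem.semisimple}(a) and the ``pure'' half of Proposition~\ref{prop.reductive} give that $k(\g)/k(\g)^G$ is pure once each factor is. I do not expect a serious obstacle in the argument itself, since it is a formal combination of the two earlier results; the one thing worth flagging is the asymmetry between the two parts. Part (a) is a genuine equivalence precisely because Lemma~\ref{lem.semisimple} descends stable purity from the product back to the individual factors, whereas the corresponding descent for plain purity is not available, so part (b) can only assert the forward implication.
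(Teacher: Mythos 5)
Your proposal is correct, and its skeleton coincides with the paper's: pass to the simply connected cover $\widetilde{G}=G_1\times\dots\times G_n$ and then apply Lemma~\ref{lem.semisimple}, with part (a) using both halves of that lemma and part (b) only the direction (a), exactly as you describe; your remarks on the splitness of the $G_i$ and on the asymmetry between (a) and (b) are also on target. The one place where you diverge is the bridging step across the isogeny. You invoke Proposition~\ref{prop.reductive} (applied to $\widetilde{G}$ and the kernel $Z$ of the covering map) to transfer (stable) purity between $k(\widetilde{\g})/k(\widetilde{\g})^{\widetilde{G}}$ and $k(\g)/k(\g)^G$; this is a legitimate application, but it routes the argument through the versality machinery (Corollary~\ref{cor.versal-c} and Theorem~\ref{thm1'}(b)) on which Proposition~\ref{prop.reductive} rests. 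The paper's proof is more economical at this point: it simply observes that the fields $k(\g)$, $k(\g)^G$, $k(\g_i)$, $k(\g_i)^{G_i}$ are \emph{literally unchanged} when $G$ is replaced by $\widetilde{G}$ --- in characteristic zero the central isogeny induces a $\widetilde{G}$-equivariant isomorphism $\widetilde{\g}\cong\g$ (as you note), and since $Z$ acts trivially on $\g$ the images of $G$ and $\widetilde{G}$ in $\GL(\g)$ coincide, so $k(\g)^{\widetilde{G}}=k(\g)^G$ as subfields of $k(\g)$. Thus the paper gets an identity of field extensions, not merely a simultaneous-purity equivalence, and needs no transfer result at all. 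Both routes are sound; yours buys a purely formal deduction from previously stated results at the cost of heavier machinery, while the paper's one-line observation is the sharper and more self-contained argument.
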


\begin{proof}
The fields $k(\g)$, $k(\g)^G$, $k(\g_i)$ and $k(\g_i)^{G_i}$ remain
unchanged if we replace $G$ by its simply connected cover. Hence we
may assume
 $G = G_1 \times  \dots \times G_n$,
and the proposition follows from Lemma~\ref{lem.semisimple}.
\end{proof}

\section{Are homogeneous spaces of the form $G/T$ stably rational?}
\label{sectG/T}

The rest of this paper will be devoted to proving
Theorem~\ref{thm1}. Using Theorem~\ref{thm.reduction} and
Proposition~\ref{prop.reductive} we may assume without loss of
generality that $G$ is simply connected and restate
Theorem~\ref{thm1} in the following equivalent form.

\begin{thm} \label{thm1.modified}
Let $G$ be a split, simple, simply connected algebraic group
 over $k$
and let $T_{\rm gen}$ be the generic torus of $G$. {\rm(}Recall that
$T_{\rm gen}$ is defined over the field $K_{\rm gen} = k(G/N)$; see
{\rm\S\ref{thegenerictorus}}.{\rm)} Then the homogeneous space
$G_{K_{\rm gen}}/T_{\rm gen}$ is
\begin{enumerate}
 \item[\rm(a)] rational over $K_{\rm gen}$ if $\,G$ is  split of type
${\sf A}_n$ $(n \geqslant 1)$  or ${\sf C}_n$  $(n \geqslant 2)$;

\item[\rm(b)] not stably rational over $K_{\rm gen}$
if $\,G$  is not of type ${\sf A}_n$ $(n \geqslant 1)$  or ${\sf
C}_n$  $(n \geqslant 2)$ or
 ${\sf G}_{2}$.
 \end{enumerate}
\end{thm}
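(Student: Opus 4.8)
The plan is to treat the two parts separately, using throughout the identification (see \S\ref{thegenerictorus}) of the character lattice of $T_{\rm gen}$ with the weight lattice $P({\sf R})$, equipped with its natural $W$-action, and exploiting the dichotomy of Theorem~\ref{thm1.1} (Propositions~\ref{prop5.3} and~\ref{prop.weight-lattice}): a permutation resolution $0 \to P_2 \to P_1 \to P({\sf R}) \to 0$ exists if and only if $G$ is of type ${\sf A}_n$, ${\sf C}_n$, or ${\sf G}_2$. For the negative assertion (b) the tool is the unramified Brauer group, a stable birational invariant: if $\Brnr\bigl(L(G/T_{\rm gen})\bigr)$ is strictly larger than $\Br(L)$ for some field extension $L/K_{\rm gen}$, then $(G/T_{\rm gen})_L$ is not stably rational over $L$, and hence $G_{K_{\rm gen}}/T_{\rm gen}$ is not stably rational over $K_{\rm gen}$.

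For part (b), first I would invoke the main result of~\cite{col-k} on the unramified Brauer group of a homogeneous space with connected stabilizer. Applied to $G/T_{\rm gen}$, with $G$ simply connected and stabilizer the torus $T_{\rm gen}$, this expresses $\Brnr\bigl(L(G/T_{\rm gen})\bigr)/\Br(L)$ in terms of the lattice cohomology of the character module $P({\sf R})$; since for $i=2$ the group $\Sha^{2}_{\omega}$ depends only on the image of the Galois group in $\Aut(P({\sf R}))$ (see \S\ref{sect2.1}), the relevant invariant is $\Sha^{2}_{\omega}(W', P({\sf R}))$, where $W'$ ranges over the subgroups of $W$ realised as Galois images over extensions $L/K_{\rm gen}$. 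The heart of the matter is then the purely lattice-theoretic claim that, when $G$ is of type ${\sf B}_n$ $(n \geqslant 3)$, ${\sf D}_n$ $(n \geqslant 4)$, ${\sf E}_6$, ${\sf E}_7$, ${\sf E}_8$, or ${\sf F}_4$, there is a subgroup $W' \leqslant W$ with $\Sha^{2}_{\omega}(W', P({\sf R})) \neq 0$. This is proved hand in hand with the negative direction of Theorem~\ref{thm1.1}: the obstruction to the existence of a permutation resolution is made explicit as a nonzero class in such a $\Sha$-group. Concretely I would look for $W'$ among the small (e.g. elementary abelian or low-rank reflection) subgroups of $W$ for which the restricted lattice cohomology is directly computable, and choose $L/K_{\rm gen}$ whose Galois group surjects onto the chosen $W'$. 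I expect this explicit nonvanishing computation, to be carried out uniformly across the orthogonal and exceptional types, to be the main obstacle in part (b).

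For part (a), the starting point is the same dichotomy: for types ${\sf A}_n$ and ${\sf C}_n$ a permutation resolution of $P({\sf R})$ does exist, so the cohomological facts of \S\ref{sect2.1} give $\Sha^{1}_{\omega}(W', P({\sf R})) = 0$ for every subgroup $W'$. Through the standard dictionary between permutation resolutions of the character lattice and (stable) rationality of tori, together with the $T_{\rm gen}$-torsor $G \to G/T_{\rm gen}$ (whose total space $G$ is $K_{\rm gen}$-rational by the Bruhat decomposition), this yields the \emph{stable} rationality of $G_{K_{\rm gen}}/T_{\rm gen}$, as recorded in Proposition~\ref{stablerationalityAC}. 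To upgrade stable rationality to genuine rationality I would use explicit models and isogeny-invariance (Remark~\ref{rem.isogeny}): in type ${\sf A}_n$ realise $G$ as $\GL_{n+1}$ or $\PGL_{n+1}$ (the latter a Cayley group by Remark~\ref{rem.cayley}), where a maximal torus attached to a separable algebra is of the quasitrivial shape $R_{E/K}\bbG_m$, and exhibit the purity of $k(\g)/k(\g)^G$ via the section of $\g \to \g\catq G$ (Proposition~\ref{prop.kostant}) together with rationality of the generic fibre; in type ${\sf C}_n$ argue similarly with $G = \Sympl_{2n}$, again transporting the question to the Lie algebra and producing an explicit rational parametrisation compatible with the permutation resolution of $P({\sf C}_n)$.

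The two anticipated difficulties are thus: in (b), producing the nonzero $\Sha^{2}_{\omega}$-class for each type outside ${\sf A}_n$, ${\sf C}_n$, ${\sf G}_2$ (the computation that simultaneously proves the hard half of Theorem~\ref{thm1.1}); and in (a), the passage from stable rationality to rationality, which is not formal and must be settled by the case-specific birational constructions indicated above. The type ${\sf G}_2$ is deliberately left aside: there a permutation resolution exists, so the Brauer obstruction of (b) vanishes, but this only shows that the method of (b) cannot detect non-rationality and does not by itself settle rationality — consistent with the ``except possibly ${\sf G}_2$'' in the statement.
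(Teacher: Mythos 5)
Your overall architecture matches the paper's (Brauer obstruction via \cite{col-k} for (b); specialness of $\SL_n$, $\Sympl_{2n}$ plus the permutation resolutions for stable rationality in (a)), but both of the steps you yourself flag as the hard ones contain genuine problems. In part (b) you invoke the wrong cohomological invariant. For $X = G/T$ with $G$ semisimple simply connected and $T$ a torus, the formula from \cite{col-k} as used in the paper (Theorem~\ref{prop4.4} together with Remark~\ref{Brauergp}) reads $\Br X_{\rm c}/\Br K \cong H^1(K, \Pic \overline{X}_{\rm c}) \cong \Sha^{1}_{\omega}(K, T^*)$: since $T^* \cong \Pic \overline{X}$, the unramified Brauer group of $G/T$ is a \emph{first} $\Sha$-group of the character lattice. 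The group $\Sha^{2}_{\omega}(W', P({\sf R}))$ you propose to compute is instead the unramified Brauer obstruction for the torus $T_{\rm gen}$ itself (cf.\ the Appendix, where $\Brnr$ of a torus with character lattice $S^*$ is $\Sha^{2}_{\omega}(\Gamma, S^*)$), and by Remark~\ref{rem.G/T-vs-T} the (stable) rationality of the generic torus follows a different pattern across the types, so this invariant cannot reproduce the theorem's dichotomy. Worse, nonvanishing of $\Sha^{2}_{\omega}$ is \emph{not} an obstruction to a resolution $0 \to P_2 \to P_1 \to P({\sf R}) \to 0$: such a resolution forces $\Sha^{1}_{\omega}(\Gamma', P({\sf R})) = 0$ for every subgroup $\Gamma'$ (because $H^1(\Gamma', P_1)=0$ embeds $H^1(\Gamma', P({\sf R}))$ into $H^2(\Gamma', P_2)$, whose $\Sha^{2}_{\omega}$ vanishes), but it controls $\Sha^{2}_{\omega}$ only of the \emph{dual} lattice. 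So your "hand-in-hand with Theorem~\ref{thm1.1}" computation, even if carried out, would prove nothing about $G/T_{\rm gen}$. The paper's Proposition~\ref{prop5.3} performs exactly the computation you envisage, but for $\Sha^{1}_{\omega}$: an explicit $H \cong (\bbZ/2\bbZ)^2$ inside $W({\sf C}_3)$, resp.\ $W({\sf D}_4)$, exhibiting $I_H$ as a direct summand of the relevant weight lattice, with ${\sf F}_4$ reduced to ${\sf D}_4$ and all other excluded types reduced to ${\sf C}_3$ or ${\sf D}_4$ via root subsystems and the duality $R \leftrightarrow R^{\vee}$.

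In part (a) the decisive step — upgrading stable rationality (Proposition~\ref{stablerationalityAC}, which you correctly reconstruct) to rationality — is left unbridged, and the specific routes you indicate do not work. Deducing rationality from Kostant's section plus "rationality of the generic fibre" is circular: the generic fibre of the friendly part of $\g \to \g \catq G$ \emph{is} a form of $G_{K_{\rm gen}}/T_{\rm gen}$ (the section merely supplies the base point making that fibre a quotient $G/T$ rather than a torsor), so its rationality is precisely the assertion to be proved; and the Cayley property of $\PGL_n$ or $\Sympl_{2n}$ compares $G$ with $\g$, not $G/T$ with anything known to be rational. The paper's actual mechanism is Lemma~\ref{lem3.15}(c): take $H \subset G$ the stabilizer of a nonzero vector in the natural representation ($H \cong U \rtimes \SL_n$ in type ${\sf A}_n$, $H \cong U \rtimes \Sympl_{2n-2}$ in type ${\sf C}_n$), check that $H$ is special, $k$-rational, meets $Z(G)$ trivially, and acts generically freely on $G/T$, and conclude that $G/T$ is birational to $H \times (H \ssetminus G/T)$, reducing everything to the double coset space. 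In type ${\sf A}_n$ that space is a unirational curve, rational by L\"uroth. In type ${\sf C}_n$, Witt's theorem identifies $H \ssetminus G$ with $\bbA^{2n}\setminus\{0\}$, the quotient $\bbA^{2n}/T_{\rm gen}$ is birational to the torus $T_2$ in $1 \to T_{\rm gen} \to T_1 \to T_2 \to 1$, and the genuinely nontrivial input is Lemma~\ref{lem.elementary}: for $n \geqslant 3$ \emph{every} surjection of a rank-$2n$ permutation $W({\sf C}_n)$-lattice onto $P({\sf C}_n)$ is $W$-isomorphic to the standard sequence~\eqref{C_{n}sequence}, whence $T_2$ is quasitrivial and rational. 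Without an argument of this kind your part (a) establishes only the stable statement.
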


\begin{remark} \label{rem.G/T-vs-T}
For $k$ algebraically closed, the question  whether or not the
generic torus $T_{\rm gen}$ of $G$ is itself $K$-rational has been
studied in some detail. The (almost) simple groups whose generic
torus is (stably) rational are classified in~\cite{ll} (in type
${\sf A}$ only),~\cite{cor-k} (for simply connected or adjoint $G$
of all types) and~\cite{lpr} (for all $G$). A comparison of
Theorem~\ref{thm1.modified} and \cite[Theorem 0.1]{cor-k} shows that
there is no obvious relation between the (stable) rationality of the
homogeneous space $G_K/T_{\rm gen}$ and that of the generic torus
$T_{\rm gen}$. On the other hand, in view of the results of this
section one might wonder if the stable rationality of $G_K/T_{\rm
gen}$ is related to that of the dual torus $T_{\rm gen}^0$. We shall
return to this question in Appendix.
\end{remark}

Let $X$ be a $K$-variety. As usual, we let $\overline{K}$ denote the
algebraic closure of $K$ and set ${\overline X}:=X_{\overline{K}}$.
The ring $\overline{K}[X]$ and the abelian group $\Pic {\overline
X}$ then come equipped with the actions of the Galois group of
$\overline{K}$ over $K$.

\begin{thm}[\cite{col-k}]\label{prop4.4}
Let $K$ be a field of arbitrary characteristic, let $G$ be a
semisimple, simply connected linear algebraic $K$-group and let $T
\subset G$ be a $K$-torus. Denote the character lattice of $T$ by
$T^*$.
\begin{enumerate}

\item[\rm(a)]
If a $K$-variety $X_{\rm c}$ is a smooth compactification of
$X=G/T$, then there is an exact sequence of Galois lattices
$$0 \to P  \to \Pic{\overline X}_{\rm c} \to T^* \to 0$$
with $P $ a permutation lattice.

\item[\rm(b)] If $\;G/T$ is stably $K$-rational,
then there exists an exact sequence of Galois lattices
$$ 0 \to P_{2}  \to P_{1}  \to T^* \to 0$$
with $P_{2} $ and $P_{1} $ permutation lattices.

\item[\rm(c)] If $\;G/T$ is stably $K$-rational,
then $\Sha^{1}_{\omega}(K,T^*)=0.$
\end{enumerate}
\end{thm}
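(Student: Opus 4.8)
The core of this theorem is part (a); once it is in place, parts (b) and (c) follow by formal manipulations with Galois lattices. My plan is first to compute the group of invertible regular functions and the geometric Picard group of $X=G/T$ itself, exploiting that $G$ is semisimple and simply connected, and then to pass from $\overline{X}$ to its smooth compactification $\overline{X}_{\rm c}$ via the localization sequence in Picard groups, in which the boundary divisors supply a permutation lattice. Note that $\overline{X}=\overline{G}/\overline{T}$ is a smooth rational variety, so $\Pic\overline{X}_{\rm c}$ is indeed a torsion-free Galois lattice.

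For part (a), I would invoke the fundamental exact sequence of Galois modules attached to the $T$-torsor $\pi\colon G\to G/T=X$ (cf.\ \cite{CTSb}):
\begin{equation*}
0\to\overline{K}[X]^{\times}/\overline{K}^{\times}\to\overline{K}[G]^{\times}/\overline{K}^{\times}\to\hat{T}\to\Pic\overline{X}\to\Pic\overline{G}\to\Pic\overline{T}.
\end{equation*}
Since $G$ is semisimple one has $\overline{K}[G]^{\times}=\overline{K}^{\times}$ (equivalently $\hat{G}=0$), and since $G$ is in addition simply connected one has $\Pic\overline{G}=0$. Feeding these vanishings in yields, on the one hand, $\overline{K}[X]^{\times}=\overline{K}^{\times}$ (no nonconstant units on $X$) and, on the other hand, a Galois-equivariant isomorphism $T^{*}=\hat{T}\overset{\cong}{\to}\Pic\overline{X}$. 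Now let $D=\overline{X}_{\rm c}\setminus\overline{X}$ have geometric irreducible components $D_{1},\dots,D_{r}$, which the Galois group permutes. The localization sequence for the smooth variety $\overline{X}_{\rm c}$ reads
\begin{equation*}
0\to\overline{K}[X]^{\times}/\overline{K}^{\times}\to\bigoplus_{i}\bbZ\,[D_{i}]\to\Pic\overline{X}_{\rm c}\to\Pic\overline{X}\to 0,
\end{equation*}
the left-hand map recording the orders of a unit along the $D_{i}$. By the vanishing just proved this map is zero, so putting $P:=\bigoplus_{i}\bbZ[D_{i}]$ (a permutation lattice, as Galois merely permutes the $D_{i}$) and using $\Pic\overline{X}\cong T^{*}$ gives the exact sequence $0\to P\to\Pic\overline{X}_{\rm c}\to T^{*}\to 0$ of part (a).

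For part (b), assume $G/T$ is stably $K$-rational and fix a smooth compactification $X_{\rm c}$ (which exists in characteristic zero by Hironaka; see \cite{col-k} for the general case). Then $X_{\rm c}$ is itself stably $K$-rational, and the class of $\Pic\overline{X}_{\rm c}$ modulo permutation lattices is a stable birational invariant of smooth projective geometrically rational varieties (Colliot-Th\'el\`ene--Sansuc, see \cite{CTSa,CTSb}). Comparing with $\bbP^{N}$, whose geometric Picard group is the trivial, hence permutation, lattice $\bbZ$, one concludes that $\Pic\overline{X}_{\rm c}$ is a stably permutation lattice; say $\Pic\overline{X}_{\rm c}\oplus Q_{1}\cong Q_{2}$ with $Q_{1},Q_{2}$ permutation. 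Taking the direct sum of the exact sequence of part (a) with the trivial sequence $0\to Q_{1}\overset{=}\to Q_{1}\to 0\to 0$ produces
\begin{equation*}
0\to P\oplus Q_{1}\to\Pic\overline{X}_{\rm c}\oplus Q_{1}\to T^{*}\to 0,
\end{equation*}
whose middle term is isomorphic to the permutation lattice $Q_{2}$ and whose left term $P_{2}:=P\oplus Q_{1}$ is permutation. Setting $P_{1}:=Q_{2}$ yields the required resolution $0\to P_{2}\to P_{1}\to T^{*}\to 0$ by permutation lattices.

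Finally, part (c) is immediate from part (b): given a resolution $0\to P_{2}\to P_{1}\to T^{*}\to 0$ of $T^{*}$ by permutation lattices, the vanishing $\Sha^{1}_{\omega}(K,T^{*})=0$ is exactly the property of such resolutions recorded in \S\ref{sect2.1}. The main obstacle is part (a), specifically the correct bookkeeping of units and of the geometric Picard group for the \emph{non-proper} variety $G/T$; once $\overline{K}[X]^{\times}=\overline{K}^{\times}$ and $\Pic\overline{X}\cong T^{*}$ are secured, the boundary-divisor computation and the stable birational invariance argument are formal. I expect the characteristic-independence claimed in the statement to require, in place of Hironaka, the alternative constructions of smooth models used in \cite{col-k}.
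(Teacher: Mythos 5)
Your part (a) is the paper's own argument, step for step: the same units--Picard exact sequence of Colliot-Th\'el\`ene--Sansuc attached to the torsor $G \to G/T$, the same two vanishings $\overline{K}[G]^{\times}=\overline{K}^{\times}$ and $\Pic \overline{G}=0$ for $G$ semisimple simply connected, and the same localization sequence at the boundary divisors. Your formal derivation of (b) from (a) (stable rationality of $X_{\rm c}$ forces $\Pic \overline{X}_{\rm c}$ to be stably permutation, then sum the sequence of (a) with a trivial permutation summand) and of (c) from (b) also coincide with the paper's first proof, and all of this is sound in characteristic zero.

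The genuine gap is exactly the one you flag in your last sentence, and your proposed fix does not exist: the theorem is asserted for $K$ of arbitrary characteristic, but in positive characteristic no smooth compactification of $G/T$ is known to exist in general (resolution of singularities is open), so your proof of (b) --- which needs a smooth proper model in order to invoke the stable birational invariance of $\Pic\overline{X}_{\rm c}$ modulo permutation lattices --- is confined to characteristic zero. Nor does \cite{col-k} supply alternative smooth models; the paper itself says its appeal to \cite[Theorem 5.1]{col-k} covers only characteristic zero, and then closes the gap by a different, compactification-free argument which you should adopt. Namely, if $X=G/T$ is stably $K$-rational, choose dense open sets $U \subset Y:= X\times_{K}\bbA^r_{K}$ and $V \subset \bbA^s_{K}$ with a $K$-isomorphism $U \cong V$. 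The torsor sequence (which concerns only the open variety, no compactification needed) already gives $\overline{K}[X]^{\times}=\overline{K}^{\times}$ and $\Pic\overline{X}\cong T^*$, whence $\overline{K}[Y]^{\times}=\overline{K}^{\times}$ and $\Pic\overline{Y}\cong T^*$; the immersion $V \subset \bbA^s_{K}$ shows that $\overline{K}[U]^{\times}/\overline{K}^{\times}\cong \overline{K}[V]^{\times}/\overline{K}^{\times}$ is a permutation module and that $\Pic\overline{U}=\Pic\overline{V}=0$; and the immersion $U \subset Y$ then yields the exact sequence
$$0 \to \overline{K}[U]^{\times}/\overline{K}^{\times} \to \Delta \to \Pic\overline{Y} \to 0,$$
where $\Delta$ is the permutation module on the irreducible divisors of $\overline{Y}$ supported off $\overline{U}$. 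This is the required resolution of $T^*$ by permutation lattices, proving (b) and hence (c) in every characteristic. So either substitute this argument for your compactification step, or restrict your claim in (b) and (c) to characteristic zero.
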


  \begin{proof}
For $K$ of characteristic zero,   this is an immediate consequence
of the more general result \cite[Theorem 5.1]{col-k}. This special
case is easier to prove.

We let here ${\overline K}$ denote a separable closure of $K$.
Associated to the $T$-torsor $G \to G/T=X$ there is a well known
exact sequence of Galois lattices (see
\cite[Prop.\;2.1.1]{descenteII}):
  $$ 0 \to \overline{K}[X]^{\times}/\overline{K}^{\times}  \to  \overline{K}[G]^{\times}/\overline{K}^{\times}  \to T^* \to \Pic {\overline X}
  \to \Pic {\overline G}.$$
  Since $G$ is  semisimple and simply connected, we have ${\overline K}^{\times}
=
   \overline{K}[G]^{\times}$ and $ \Pic {\overline G}=0$ (see \cite[Prop.\,1]{popov0}).
    We thus get ${\overline K}^{\times}
=  \overline{K}[X]^{\times}$
   and $T^*
   \xrightarrow{\cong}
    \Pic {\overline X}$
      (this is a special case of
     \cite[Theorem~4]{popov0} where $\Pic {\overline {G/H}}$
     for arbitrary subgroup $H$ is described).
     The open immersion of smooth $K$-varieties  $X \subset X_{\rm c}$
   gives rise to an exact sequence of Galois lattices
   $$0 \to  \overline{K}[X]^{\times}/{\overline K}^{\times}  \to \Div_{\infty}{\overline X}_{\rm c}
   \to \Pic {\overline X}_{\rm c} \to \Pic {\overline X} \to 0.$$
   Here $\Div_{\infty}{\overline X}_{\rm c}$ is the free abelian group on points of codimension 1
   of ${\overline X}_{\rm c}$ with support in the complement of ${\overline X} $.
This is a permutation lattice, call it $P$. All in all, we get an
exact sequence of torsion-free Galois lattices
   $$ 0 \to P  \to \Pic {\overline X}_{\rm c} \to T^* \to 0.$$
This proves (a).

If ${\rm char}(K)=0$, then $G/T$ admits a smooth
$K$-compactification. Statement (b) is then a consequence of (a) and
the well known fact that if the smooth, proper $K$-variety $X_{\rm
c}$ is stably $K$-rational, then the  Galois lattice
$\Pic({\overline X}_{\rm c})$ is a stably permutation lattice (see
\cite[Prop. 2.A.1]{descenteII}). Statement (c) then  follows from
(b) (see  \S \ref{sect2}).

There is however no need to use a smooth compactification to prove
(b) and (c). If  $X=G/T$ is stably $K$-rational, there exist natural
integers $r$ and $s$ and dense open sets $U \subset Y=X \times_{K}
\bbA^r_{K}$ and $V \subset \bbA^s_{K}$ together with a
$K$-isomorphism $U \oii V$. The natural maps ${\overline K}^{\times}
\to {\overline K}[\bbA^s]^{\times}$ and $  {\overline K}[X]^{\times}
\to {\overline K}[X \times_{K} \bbA^r]^{\times}$
 are isomorphisms.  By what we have seen above,
$ {\overline K}^{\times} \to {\overline K}[X]^{\times}$ is an
isomorphism. Thus $ {\overline K}^{\times} \oii {\overline
K}[Y]^{\times}$. We have $\Pic {\overline \bbA}^s=0$, hence
$\Pic{\overline V}=0$, hence $\Pic{\overline U}=0$. The pull-back
map associated to the projection $Y=X \times_{K} \bbA^r \to X$
induces an isomorphism of Galois modules $\Pic{\overline X}
 \oii  \Pic{\overline Y}$.
 From what we have proved above this induces an isomorphism
$T^* \simeq \Pic{\overline Y}$.

The open immersion $V\subset \bbA^s_{K}$ induces an isomorphism of
Galois modules between the permutation module on irreducible
divisors of $\bbA^s_{\overline K}$ with support in the complement of
${\overline V}$ and the Galois module ${\overline
K}[V]^{\times}/{\overline K}^{\times}$, hence the Galois module
${\overline K}[U]^{\times}/{\overline K}^{\times}$ is a permutation
module. The open immersion $U \subset Y$ induces an exact sequence
$$0 \to {\overline K}[U]^{\times}/{\overline K}^{\times} \to \Delta \to \Pic{\overline Y} \to 0,$$
where $\Delta$ is the permutation module on irreducible divisors of
${\overline Y}$ with support in the complement of ${\overline V}$.
This completes the proof of (b), hence also of (c).
 \end{proof}

 \begin{remark}\label{Brauergp} In certain circles, the (unramified) Brauer
group is a well known $K$-birational invariant of smooth,
projective, geometrically integral $K$-varieties. For $X \subset
X_{\rm c}$ as above, this is the group $\Br X_{\rm c}$.
 The connection with
the above proposition is given by an isomorphism $ \Br X_{\rm c}/\Br
K \oii     H^1(K, \Pic  {\overline X}_{\rm c}) $, which one combines
with an isomorphism $ H^1(K, \Pic  {\overline X}_{\rm c}  ) \oii
\Sha^{1}_{\omega}(K,T^*)$ deduced from Statement (a) to produce an
isomorphism
 $$\Br X_{\rm c}/\Br K  \oii \Sha^{1}_{\omega}(K,T^*).$$
 The interested reader is referred to
\cite{col-k} for details.
 \end{remark}

 Under a strong assumption on the group $G$, we shall now establish a converse
 to statement (b) in Theorem~\ref{prop4.4}.
 We first prove a lemma.

\begin{lem} \label{lem3.1} Let $K$ be a field of
characteristic $0$ and let $H$ be a special linear algebraic
$K$-group. If $X$ is a geometrically integral $K$-variety with a
generically free action of $H$, then $X$ admits a dense $H$-stable
open set $U$ which is $H$-isomorphic to   $H \times_{K} Y$ for a
$K$-variety $Y$ whose function field $K(Y)$ is $K$-isomorphic to
$K(X)^H$ and for the $H$-action induced by left translation of $H$.
\end{lem}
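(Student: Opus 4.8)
The plan is to realize a dense open piece of $X$ as an $H$-torsor over its rational quotient and then to trivialize that torsor using the hypothesis that $H$ is special. Working over $K$, note that since the $H$-action is generically free, the stabilizer of a general $\overline{K}$-point is trivial, so condition (b) of Theorem~\ref{generalGS} holds with $S = \{1\}$; hence $X$ is an $(H,\{1\})$-variety. By Example~\ref{torsor} an $(H,\{1\})$-fibration is precisely an $H$-torsor, so after passing to a friendly open subset I may assume I am given an $H$-torsor $\pi \colon U_1 \to Y_1$, where $U_1$ is a dense $H$-stable open subset of $X$ and $\pi^*$ induces an isomorphism $K(Y_1) \overset{\cong}{\to} K(U_1)^H = K(X)^H$.

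The key step is to produce a rational section of $\pi$. The generic fibre of $\pi$ is a torsor under $H_{K(Y_1)}$ over $\Spec K(Y_1)$, i.e.\ an element of $H^1(K(Y_1), H)$. Since $K(Y_1)$ is a field extension of $K$ and $H$ is special (see \S\ref{sect2.3}), this set is reduced to one point, so the generic fibre is the trivial torsor and therefore carries a $K(Y_1)$-rational point. Such a point is exactly a rational section $s \colon Y_1 \dasharrow U_1$ of $\pi$.

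Finally I would spread out and trivialize. Shrinking $Y_1$ to a dense open subset $Y$ on which $s$ is regular and setting $U = \pi^{-1}(Y)$, I obtain the $H$-equivariant $Y$-morphism
\[
\Phi \colon H \times_K Y \to U, \qquad (h, y) \mapsto h \cdot s(y),
\]
where $H$ acts on $H \times_K Y$ by left translation on the first factor. Both source and target are $H$-torsors over $Y$ and $\Phi$ is a morphism of $H$-torsors over $Y$; since any such morphism is automatically an isomorphism, $\Phi$ is the desired $H$-isomorphism $U \cong H \times_K Y$. As $K(Y) = K(Y_1) \cong K(X)^H$, this $Y$ has the required property.

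The arguments are essentially formal once the section is in hand; the one point that genuinely uses the hypotheses is the triviality of the generic torsor, which rests on specialness of $H$ together with the fact that $K(Y_1)$ is a field (so that only $H^1$ of a field, rather than of a positive-dimensional base, intervenes). The mild technical care needed lies in the spreading-out step, namely ensuring that $s$ is regular on $Y$ and that $\Phi$ is an isomorphism of varieties rather than merely a bijection; the latter follows from the standard fact that a morphism of torsors over a common base is an isomorphism.
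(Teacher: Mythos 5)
Your proof is correct and takes essentially the same route as the paper's: reduce via Theorem~\ref{generalGS} (with $S=\{1\}$) to an $H$-torsor over a friendly open subset, then use specialness of $H$ over the function field of the base to split the torsor over a dense open set. Your write-up simply makes explicit, via the rational section and the standard fact that a morphism of torsors over a common base is an isomorphism, what the paper compresses into the single sentence that the torsor splits over a dense open set of $Y$.
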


\begin{proof}  After replacing $X$ by a friendly open set, we may
assume that $X$ is the total space of an $H$-torsor $X \to Y=X/H$.
Since $H$ is special, this torsor splits over a dense open set of
$Y$. If we replace $Y$ by this open set, we have $X=H \times_K Y$ as
varieties with an $H$-action.
\end{proof}

\begin{prop}\label{prop4.4bis}
Let $K$ be a field of characteristic zero, and let $G $ be a
special, $K$-rational $K$-group. Let $T \subset G$ be a  $K$-torus.
If there exists an exact sequence of Galois modules
$$ 0 \to P_{2}  \to P_{1}  \to T^* \to 0$$
with $P_{2} $ and $P_{1} $ permutation modules,
 then $G/T$ is  stably $K$-rational.
\end{prop}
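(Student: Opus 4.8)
The plan is to dualize the given sequence of character lattices into a sequence of tori and then to leverage both hypotheses on $G$ (special and $K$-rational) by means of an auxiliary contracted product that fibers in two different ways. Applying the anti-equivalence between $K$-tori and Galois lattices (\S\ref{sect2.2}) to the exact sequence $0 \to P_2 \to P_1 \to T^* \to 0$, I obtain an exact sequence of $K$-tori
\[
1 \to T \to Q_1 \to Q_2 \to 1,
\]
where $Q_i$ is the torus with character lattice $P_i$ and $T \to Q_1$ is the closed immersion dual to $P_1 \twoheadrightarrow T^*$. Since $P_1$ and $P_2$ are permutation lattices, both $Q_1$ and $Q_2$ are quasitrivial, hence $K$-rational and \emph{special} (\S\ref{sect2.2}, \S\ref{sect2.3}); moreover $Q_1/T \cong Q_2$.

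Next I would form the contracted product
\[
E := Q_1 \times^{T} G,
\]
associated to the $T$-torsor $G \to G/T$ (right translation on $G$) and the translation action of $T$ on $Q_1$ through $T \hookrightarrow Q_1$; this exists as a $K$-variety because $Q_1$ is affine (see the construction of $F \times^C P$ following Proposition~\ref{GSfibration0}). The crucial observation is that $E$ carries two compatible free actions. Left translation of $G$ on the factor $G$ descends to a free $G$-action on $E$ whose quotient is $G\backslash E \cong Q_1/T = Q_2$, so that $E \to Q_2$ is a $G$-torsor. The full translation action of $Q_1$ on the factor $Q_1$ descends to a free $Q_1$-action on $E$ whose quotient is $E/Q_1 \cong G/T$, and because $T$ acts on $Q_1$ by translation this realizes $E \to G/T$ as a $Q_1$-torsor. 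I would check both assertions by pulling back along an \'etale cover that trivializes $G \to G/T$, over which $E$ becomes $Q_1 \times (G/T)$ with the evident actions.

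Now specialness is invoked twice via Lemma~\ref{lem3.1}. Since $G$ is special and acts freely on the geometrically integral variety $E$ with $K(E)^G = K(Q_2)$, the variety $E$ is $K$-birational to $G \times Q_2$; since the quasitrivial torus $Q_1$ is special and acts freely on $E$ with $K(E)^{Q_1} = K(G/T)$, the variety $E$ is $K$-birational to $(G/T) \times Q_1$. Combining these,
\[
(G/T) \times Q_1 \;\sim\; E \;\sim\; G \times Q_2 .
\]
As $G$ is $K$-rational by hypothesis and $Q_2$ is $K$-rational, the right-hand product $G \times Q_2$ is $K$-rational, hence so are $E$ and $(G/T) \times Q_1$. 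Finally $Q_1$ is $K$-rational of dimension $d := \dim Q_1$, so $(G/T) \times \bbA^d_K$ is $K$-rational, which is precisely the statement that $G/T$ is stably $K$-rational.

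The heart of the argument, and the step I expect to require the most care, is the construction of $E$ together with the verification that it is simultaneously a $G$-torsor over $Q_2$ and a $Q_1$-torsor over $G/T$; once this double fibration is in place, the specialness of both $G$ and the quasitrivial torus $Q_1$ forces each torsor to be rationally trivial and the rest is rationality bookkeeping. It is worth noting that the construction yields \emph{stable} rationality rather than outright rationality because the comparison introduces the extra rational factor $Q_1$, over which the argument exerts no control; this is the expected outcome, as the proposition is meant to furnish a partial converse to Theorem~\ref{prop4.4}(b) under the additional hypotheses that $G$ is special and $K$-rational.
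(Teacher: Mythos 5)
Your proof is correct and is essentially the paper's own argument in different packaging: your contracted product $E = Q_1 \times^{T} G$ is exactly the paper's quotient $(G \times T_1)/T$ with $T$ embedded diagonally in $G \times T_1$, and the two torsor structures $E \to Q_2$ (a $G$-torsor) and $E \to G/T$ (a $Q_1$-torsor) are the paper's two projections. From there both proofs apply Lemma~\ref{lem3.1} twice, using the specialness of $G$ and of the quasitrivial torus $Q_1=T_1$, to obtain $(G/T) \times T_1 \sim G \times T_2$ and conclude stable $K$-rationality.
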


\begin{proof}
By assumption we  have an exact sequence of $K$-tori
\[ 1 \to T \to T_{1} \to T_{2} \to 1 \]
with $T_{1}$ and $T_2$ quasitrivial. We now identify $T$ with the
diagonal subgroup $T$ of  $ G \times T_{1}$, and $G$ with the
subgroup $G \times \{ 1 \}$ of $G \times T_1$. The first projection
$G \times T_1 \to G$ induces a map $(G \times T_1)/T \to G/T$ which
makes $(G \times T_1)/T$ into a right $T_{1}$-torsor over $G/T$. The
second projection $G \times T_1 \to T_{1}$ induces a map $(G \times
T_1)/T \to T_{1}/T$ which makes $(G \times T_1)/T$ into a left
$G$-torsor over $T_{1}/T = T_{2}$. In summary, we have the following
diagram:
\[ \xymatrix@C=3mm@R=6mm{  &  (G \times T_1)/T
\ar@{->}[dr]^{\hskip 4mm\text{\footnotesize left $G$-torsor}}
\ar@{->}[dl]_{\text{\footnotesize right $T_1$-torsor}\hskip 2mm} &
\cr G/T & & T_1/T = T_2 \cr } \] Since $T_{1}$ is a quasitrivial
torus hence a special $K$-group, $(G \times T_1)/T$ is
$K$-birationally isomorphic  to $T_{1} \times (G/T)$; see
Lemma~\ref{lem3.1}. Similarly, since $G$ is special, $(G \times
T_1)/T$ is $K$-birationally isomorphic to $G \times T_{2}$.  Thus
$T_{1} \times (G/T)$ is $K$-birationally isomorphic to $G \times
T_{2}$. Since $T_{1}, T_{2}$ and $G$ are $K$-rational varieties, we
conclude that $G/T$ is stably $K$-rational.
\end{proof}

We now specialize Theorem~\ref{prop4.4} and
Proposition~\ref{prop4.4bis} to the setting of
Theorem~\ref{thm1.modified}.

\medskip

  The weight lattice $P({\sf R})$ of a root system of type ${\sf R}$
  is equipped with a natural action
of the Weyl group $W$.

\begin{cor} \label{cor.generic}
Let $G$ be a split, simple, simply connected algebraic group of type
${\sf R}$, defined over $k$ and $T_{\rm gen}$ be the generic torus
of $\;G$. Recall that $T_{\rm gen}$ is defined over the field
$K_{\rm gen} = k(G/N)$; cf. {\rm\S\ref{thegenerictorus}}.

\begin{enumerate}
 \item[\rm(a)] If the homogeneous space $G_{K_{\rm gen}}/T_{\rm gen}$
is stably rational over $K_{\rm gen}$, then there exists an exact
sequence
 \begin{equation}\label{PPP}
 0 \to P_2 \to P_1 \to P({\sf R}) \to 0
\end{equation}
of $\;W$-lattices, where $P_1$ and $P_2$ are permutation.

\item[\rm (b)] Suppose $G$ is special. Then the
converse to part {\rm(a)} holds. That is, if there exists an exact
sequence \eqref{PPP} of $\;W$-lattices with $P_1$ and $P_2$
permutation lattices, then $G_{K_{\rm gen}}/T_{\rm gen}$ is stably
rational over $K_{\rm gen}$.
\end{enumerate}
\end{cor}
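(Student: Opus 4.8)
The plan is to read off both implications directly from Theorem~\ref{prop4.4}(b) and Proposition~\ref{prop4.4bis}, the only extra ingredient being a passage between Galois lattices over $K_{\rm gen}$ and $W$-lattices. Throughout I write $\Gamma=\Gal(\overline{K_{\rm gen}}/K_{\rm gen})$, $L=k(G/S)$, and $\Gamma_0=\Gal(\overline{K_{\rm gen}}/L)$, so that $\Gamma/\Gamma_0=W$. By the description of the generic torus recalled in \S\ref{thegenerictorus}, the character lattice $T_{\rm gen}^*$ is $W$-equivariantly isomorphic to the weight lattice $P({\sf R})$ and is split by $L$; equivalently, $\Gamma_0$ acts trivially on $T_{\rm gen}^*$ and the induced action of $\Gamma/\Gamma_0=W$ is the natural one on $P({\sf R})$.

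For part (a), I would first apply Theorem~\ref{prop4.4}(b): since $G$ is simple and simply connected and $G_{K_{\rm gen}}/T_{\rm gen}$ is stably rational over $K_{\rm gen}$, there is an exact sequence of $\Gamma$-lattices $0 \to P_2 \to P_1 \to T_{\rm gen}^* \to 0$ with $P_1,P_2$ permutation. The real content is to descend this to a sequence of $W$-lattices, which I would do by taking $\Gamma_0$-invariants. Since $P_2$ is a permutation $\Gamma$-lattice, its restriction to $\Gamma_0$ is again a permutation lattice, whence $H^1(\Gamma_0,P_2)=0$ by the vanishing recorded in \S\ref{sect2.1}. The long exact $\Gamma_0$-cohomology sequence $0 \to P_2^{\Gamma_0} \to P_1^{\Gamma_0} \to (T_{\rm gen}^*)^{\Gamma_0} \to H^1(\Gamma_0,P_2)$ then shows, using $(T_{\rm gen}^*)^{\Gamma_0}=P({\sf R})$, that $0 \to P_2^{\Gamma_0} \to P_1^{\Gamma_0} \to P({\sf R}) \to 0$ is exact. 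Finally each $P_i^{\Gamma_0}$ is a permutation $W$-lattice: a $\Gamma$-permutation basis of $P_i$ is permuted by $\Gamma_0$, and since $\Gamma_0\triangleleft\Gamma$ the group $W$ permutes the finitely many $\Gamma_0$-orbit sums, which form a $\bbZ$-basis of $P_i^{\Gamma_0}$. This produces the desired sequence \eqref{PPP}.

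For part (b) I would run the same construction in reverse and feed it into Proposition~\ref{prop4.4bis}. Pulling the given exact sequence of $W$-lattices \eqref{PPP} back along $\Gamma \to W$ yields an exact sequence of Galois lattices $0 \to P_2 \to P_1 \to T_{\rm gen}^* \to 0$, in which permutation $W$-lattices become permutation $\Gamma$-lattices. To invoke Proposition~\ref{prop4.4bis} over $K=K_{\rm gen}$ I must check that $G_{K_{\rm gen}}$ is special and $K_{\rm gen}$-rational: it is special by hypothesis, and specialness is preserved under base change since $H^1(F,G_{K_{\rm gen}})=H^1(F,G)=1$ for every field $F/K_{\rm gen}$; and it is $K_{\rm gen}$-rational because $G$ is split, so $G_{K_{\rm gen}}$ is split and its big Bruhat cell exhibits it as a rational variety (cf.\ Remark~\ref{rem.bruhat}). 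Proposition~\ref{prop4.4bis} then gives that $G_{K_{\rm gen}}/T_{\rm gen}$ is stably $K_{\rm gen}$-rational.

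I expect the one genuinely substantive step to be the lattice descent in part (a): the verification that $H^1(\Gamma_0,P_2)=0$ allows one to replace a Galois permutation resolution of $T_{\rm gen}^*$ by a permutation resolution of $P({\sf R})$ over the finite group $W$. Everything else is bookkeeping, namely the identification of $T_{\rm gen}^*$ with $P({\sf R})$ from \S\ref{thegenerictorus} and the confirmation of the specialness and rationality hypotheses needed for Proposition~\ref{prop4.4bis} in part (b).
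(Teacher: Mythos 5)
Your proof is correct and follows essentially the same route as the paper: the paper's own proof likewise applies Theorem~\ref{prop4.4}(b) for (a) and Proposition~\ref{prop4.4bis} for (b) to $G_{K_{\rm gen}}$ and $T_{\rm gen}$, using the identification of $T_{\rm gen}^*$ with $P({\sf R})$ split by the $W$-Galois extension $k(G/S)/k(G/N)$, together with the $K_{\rm gen}$-rationality of the split group $G_{K_{\rm gen}}$ and the stability of specialness under base change. The only difference is one of detail: you spell out the translation between Galois lattices and $W$-lattices (taking $\Gamma_0$-invariants and using $H^1(\Gamma_0,P_2)=0$ in part (a), inflating along $\Gamma\to W$ in part (b)), a step the paper compresses into its appeal to ``the correspondence between tori and lattices (\S\ref{sect2})''.
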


\begin{proof} As recalled in \S \ref{thegenerictorus}, the $K_{\rm gen}$-torus
$T_{\rm gen}$ splits over a  Galois extension of $K_{\rm gen}$ with
Galois group the Weyl group $W$, and the character lattice of
$T_{\rm gen}$ is isomorphic to the weight lattice $P({\rm R})$ with
its natural $W$-action. Note also that since $G$ is split, $G$ is
rational over $k$ and hence $G_{K_{\rm gen}}$ is rational over
$K_{\rm gen}$. Applying Theorem~\ref{prop4.4} (b) and
Proposition~\ref{prop4.4bis}   to $G_{K_{\rm gen}}$ and $T_{\rm
gen}$ and using the correspondence between tori and lattices
(\S\ref{sect2}) we get (a) and (b).
\end{proof}

\section{Nonrationality}
\label{sect.non-rationality}

In this section we shall prove Theorem~\ref{thm1}(b) or
equivalently, Theorem~\ref{thm1.modified}(b). To prove the latter,
one may assume that $k$ is algebraically closed. In view of
Corollary~\ref{cor.generic}(a), it suffices to establish the
following proposition.

\begin{prop} \label{prop5.3}
Let ${R}$ be a reduced, irreducible root system in a real vector
space $V$ that it spans.
 Let $P({R})$ be the weight lattice equipped with the action of the
Weyl group $W=W({R})$. If ${R}$ is not of type ${\sf A}_{n}$, ${\sf
C}_{n}$ or ${\sf G}_{2}$, then there exists a subgroup $H \cong
(\bbZ/2\bbZ)^2 $ in $W$ such that $\Sha^1_{\omega}(H,P({R})) \neq
0.$ In particular, there does not exist an exact sequence of
$W$-lattices
$$0 \to P_{2} \to P_{1} \to P({R}) \to 0 $$
with $P_{1}$ and $P_{2}$ permutation lattices.
\end{prop}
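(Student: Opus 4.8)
The statement has two parts: first, the cohomological assertion that $\Sha^1_\omega(H,P(R))\neq 0$ for a suitable Klein four-subgroup $H\subset W$, and second, the deduction that no flasque-type resolution of $P(R)$ by permutation lattices can exist. The second part is an immediate consequence of the first together with the last displayed property recorded in \S\ref{sect2.1}: if there were an exact sequence $0\to P_2\to P_1\to P(R)\to 0$ with $P_1,P_2$ permutation, then $\Sha^1_\omega(\Gamma',P(R))=0$ for \emph{every} subgroup $\Gamma'$ of $W$, contradicting the existence of $H$. So the plan is to focus entirely on exhibiting $H$ and a nonzero class.

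The main work is therefore to produce, for each relevant type, a subgroup $H\cong(\bbZ/2\bbZ)^2$ of $W$ and a cocycle representing a nonzero element of $\Sha^1_\omega(H,P(R))$. For a Klein four-group $H=\{1,a,b,ab\}$ acting on a lattice $M=P(R)$, the group $\Sha^1_\omega(H,M)$ is the subgroup of $H^1(H,M)$ consisting of classes that restrict to zero on each of the three cyclic subgroups $\langle a\rangle,\langle b\rangle,\langle ab\rangle$. My plan is to reduce to a concrete computation: choose $a,b$ to be commuting reflections (or short products of reflections) so that their fixed and anti-fixed sublattices are easy to describe, and then compute $H^1(H,M)$ together with the three restriction maps directly. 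A clean way to organize this is to pass to the well-known presentation of $H^1$ and $\Sha^1_\omega$ for a Klein four-group in terms of the lattice $M$ and the involutions $a,b$: one writes down the joint eigenspace decomposition of $M\otimes\bbQ$ under $a$ and $b$, tracks the integral lattice inside it, and reads off the kernel-of-restrictions group as a quotient of explicit sublattices. The key is to find $H$ so that this quotient is nonzero.

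I would carry this out type by type, but organized so as to minimize cases. The natural strategy is to find a \emph{single} construction that works uniformly and then check the exceptions. Concretely, I expect to take $a,b$ to be reflections in two \emph{orthogonal} roots $\alpha,\beta$ (so that $H=\langle s_\alpha,s_\beta\rangle\cong(\bbZ/2\bbZ)^2$), and to detect nonvanishing of $\Sha^1_\omega$ through the weight $\varpi$ whose pairing with $\alpha^\vee$ and $\beta^\vee$ produces a $2$-torsion obstruction in the quotient lattice. The excluded types ${\sf A}_n$, ${\sf C}_n$, ${\sf G}_2$ are precisely those where this obstruction collapses: in type ${\sf C}_n$ the weight lattice is a permutation lattice for $W$ (so $\Sha^1_\omega$ vanishes for all subgroups), in type ${\sf A}_n$ the same holds by the standard permutation description of $P({\sf A}_n)$, and ${\sf G}_2$ is too small to contain a suitable $H$ with nontrivial obstruction. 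The cleanest route for the non-excluded types is to embed a sub-root-system of type ${\sf D}_2={\sf A}_1\times{\sf A}_1$ or of type ${\sf B}_2$/${\sf D}_4$ carrying the relevant configuration, compute $\Sha^1_\omega$ there, and then use restriction/inflation compatibility to pull the nonzero class back.

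The main obstacle, I expect, is the uniformity: getting a single choice of $H$ and a single cohomology class to work across all of ${\sf B}_n,{\sf D}_n,{\sf E}_6,{\sf E}_7,{\sf E}_8,{\sf F}_4$ is delicate, because the weight lattice and the available orthogonal root configurations differ from type to type. The realistic plan is to isolate the structural feature responsible for nonvanishing—namely a pair of orthogonal roots $\alpha,\beta$ together with a weight $\varpi$ with $\langle\varpi,\alpha^\vee\rangle$ and $\langle\varpi,\beta^\vee\rangle$ both odd—show this configuration exists in every type outside the excluded list, and prove a general lemma that its presence forces $\Sha^1_\omega(H,P(R))\neq0$ for $H=\langle s_\alpha,s_\beta\rangle$. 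The final verification that ${\sf A}_n$, ${\sf C}_n$, ${\sf G}_2$ genuinely lack such a configuration (consistent with the permutation-resolution statement recorded in Theorem~\ref{thm1.1}) is then a short case check.
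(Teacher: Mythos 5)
Your reduction of the second assertion to the first is correct and agrees with the paper: given $H$ with $\Sha^1_{\omega}(H,P({R}))\neq 0$, no resolution $0\to P_2\to P_1\to P({R})\to 0$ by permutation lattices can exist, by the vanishing property recalled at the end of \S\ref{sect2.1}. But the engine of your argument --- the proposed general lemma that a pair of orthogonal roots $\alpha\perp\beta$ together with a weight $\varpi$ with $\langle\varpi,\alpha^\vee\rangle$ and $\langle\varpi,\beta^\vee\rangle$ both odd forces $\Sha^1_{\omega}(H,P({R}))\neq 0$ for $H=\langle s_\alpha,s_\beta\rangle$ --- is false, and so is the companion claim that types ${\sf A}_n$, ${\sf C}_n$, ${\sf G}_2$ lack such configurations. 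In type ${\sf A}_3$ take $\alpha=\varepsilon_1-\varepsilon_2$, $\beta=\varepsilon_3-\varepsilon_4$ and $\varpi=\varpi_1+\varpi_3$: your configuration exists, yet the permutation resolution \eqref{A_{n}sequence} forces $\Sha^1_{\omega}(H',P({\sf A}_3))=0$ for \emph{every} subgroup $H'$ of $S_4$. Likewise in ${\sf C}_n$ with $\alpha=\varepsilon_1-\varepsilon_2$, $\beta=\varepsilon_1+\varepsilon_2$, $\varpi=\varepsilon_1$, where \eqref{C_{n}sequence} kills all $\Sha^1_{\omega}$. So no lemma of the proposed shape can be true. (Two side remarks are also inaccurate: $P({\sf A}_n)$ and $P({\sf C}_n)$ are \emph{not} permutation lattices --- they merely admit permutation resolutions, which is Proposition~\ref{prop.weight-lattice} --- and $W({\sf G}_2)$, dihedral of order $12$, does contain Klein four-subgroups; ${\sf G}_2$ is excluded because every lattice of rank $\leqslant 2$ admits a permutation resolution, by Voskresenski\u{\i}'s theorem.)

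There is a structural reason why reflection-generated $H$ is the wrong choice. If $a=s_\alpha$, $b=s_\beta$ with $\alpha\perp\beta$, the character $\chi$ of $H$ with $\chi(a)=\chi(b)=-1$ cannot occur in $P({R})\otimes\bbQ$: its eigenspace would be $\bbR\alpha\cap\bbR\beta=0$. But the nonvanishing mechanism that actually works is modelled on the augmentation ideal $I_H$, in which \emph{all three} nontrivial characters of $H$ occur and $\Sha^1_{\omega}(H,I_H)\cong\bbZ/2\bbZ$. Accordingly, the paper first dualizes the root system and reduces, via the sequence $0\to Q({R}')\to Q({R})\to\bbZ^{l-l'}\to 0$ and the coincidence $P({\sf F}_4)=P({\sf D}_4)$, to subsystems of types ${\sf C}_3$ and ${\sf D}_4$; there it takes $H=\langle c_1c_3,\,c_2(13)\rangle\subset W({\sf C}_3)$, respectively $H=\langle c_3c_4,\,c_1c_2(34)\rangle\subset W({\sf D}_4)$ --- elements that are products of pairs of commuting reflections, not reflections --- and verifies by direct computation that $J_H$ (the dual of $I_H$) is a direct factor of $Q({R}')$, whence $I_H$ is a direct factor of the weight lattice of the dual system. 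On that factor each nontrivial element of $H$ acts with $(-1)$-eigenspace of dimension $\geqslant 2$, so all three nontrivial characters appear --- exactly what your $H=\langle s_\alpha,s_\beta\rangle$ can never achieve. Repairing your plan would mean abandoning orthogonal reflections and hunting for Klein four-groups of this ``rotation'' type with a prescribed integral structure, which is in essence the paper's argument.
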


\begin{proof}
Let $B$ be a basis of ${R}$. Let $W=W({R})$ be the Weyl group. The
abelian group $Q({R}) \subset V$ spanned by ${R}$ is the root
lattice, its rank is $l:= \dim(V)$. There is an inclusion $Q({R})
\subset P({R}) \subset V$, where $P({R})$ is the weight lattice. See
\cite[VI.\,1.9]{Bou}. Both $Q({R})$ and $P({R})$ are $W$-lattices.

Let $B' \subset B$ be a subset of $B$ of cardinality $l'$, let $V'
\subset V$ be the vector space spanned by $B'$ and let ${R}'={R}
\cap V'$. This $R'$ is a root system in $V'$, $B'$ is a basis of
${R}'$, and $Q({R}')=Q({R}) \cap V'$. See \cite[VI.1.7, Cor.\,4,
p.\,162]{Bou}.

This implies that $Q ({R}')$ is a direct factor of $Q ({R})$ (as an
abelian group). Moreover, since $W ({R}')$ is generated by the
reflections in the hyperplanes orthogonal to the roots $\alpha \in
{R}'$, $W ({R}')$ can naturally be viewed as a subgroup of $W
({R})$. {}From the formula $s_{\alpha}(\beta)=\beta -
n_{\beta,\alpha} \alpha$ we see that $W({R}')$ acts trivially on
$V/V'$ and hence on $Q({R})/Q({R}')$. We write $Q ({R})/Q({R}')=\bbZ
^{l-l'}$, the trivial $W ({R}')$-lattice. In other words, there is a
short exact sequence of $W({R}')$-lattices
\begin{equation}\label{Qsequence}
  0 \to Q({R}') \to Q({R}) \to {\bbZ}^{l-l'} \to 0.
  \end{equation}
Our proof of Proposition~\ref{prop5.3} will rely on the following
claim.

\begin{claim} \label{claim1}
For a root system ${R}$ dual  {\rm(}inverse in the terminology of
{\rm\cite[VI\,1.1]{Bou}}{\rm)} to one of the root systems occurring
in the statement of the proposition, there exist  a subsystem ${R}'
\subset {R}$ {\rm(}as above{\rm)}, a subgroup $H \cong
(\bbZ/2\bbZ)^2 $ in $W({R}')$ and a direct factor $J_H$ of the
$H$-lattice $Q({R}')$, where $J_H$ is the cokernel of the map $\bbZ
\to \bbZ[H]$ given by the norm.
\end{claim}

Indeed, assume Claim~\ref{claim1} is established. Consider the exact
sequences
\begin{gather*}
0 \to \bbZ \to \bbZ[H] \to J_H \to 0,\\
0 \to I_H \to \bbZ[H] \to \bbZ \to 0,
\end{gather*}
where the map $\bbZ[H] \to \bbZ$ is augmentation. The latter
sequence yields $ \Sha^1_{\omega}(H,I_H) \cong\bbZ/2\bbZ$.

The Weyl groups of a root system ${R}$ and of its dual ${R}^{\vee}$
are identical. Exact sequence (\ref{Qsequence})
 induces an exact
sequence of $W(R')$-lattices (the last two are weight lattices)
\begin{equation}\label{Psequence}
0 \to {\bbZ}^{l-l'} \to P({R}^{\vee}) \to P({R}'^{\vee}) \to 0,
 \end{equation}
which we view as an exact sequence of $H$-lattices. Here
${R}^{\vee}$ is a root system as occurring in the proposition to be
proved, i.e., a root system not of type ${\sf A}_{n}$, ${\sf C}_{n}$
or ${\sf G}_{2}$. (Recall that by the assumption $G$ is simply
connected, hence the $W$-lattice given by the character group of a
maximal torus   is the weight lattice.)

Using Claim~\ref{claim1} we conclude that the $H$-lattice
$P({R}'^{\vee})$ (dual to $Q({R}')$) contains the $H$-lattice $I_H$
(dual to $J_H$) as a direct factor, hence
$\Sha^1_{\omega}(H,P({R}'^{\vee})) \neq 0$. {}From the exact
sequence (\ref{Psequence})  we get, by a standard computation,
$$\Sha^1_{\omega}(H,P({R}^{\vee})) \cong \Sha^1_{\omega}(H,P({R}'^{\vee}))$$
hence
$$ \Sha^1_{\omega}(H,P({R}^{\vee}))\neq 0.$$

To complete the proof of Proposition~\ref{prop5.3} it remains to
establish Claim~\ref{claim1}.

\smallskip
{\em Proof of Claim~{\rm\ref{claim1}}}: The root systems ${R}$ dual
to those considered in the proposition are those of types $ {\sf
C}_n \, (n \geqslant 3)$, ${\sf D}_n \, (n \geqslant 4)$, ${\sf E}_r
\, (r = 6,7,8)$ and ${\sf F}_4$.

Any Dynkin diagram of type ${\sf C}_n$\,$(n \geqslant 3)$ contains a
subdiagram of type ${\sf C}_3$. All the other ones in the list
above, except ${\sf F}_4$, contain a subdiagram of type ${\sf D}_4$.
The case of ${\sf F}_4$ can be reduced to ${\sf D}_4$ because the
weight lattices $P({\sf F}_4)$ and  $P({\sf D}_4)$ coincide (as
abelian groups in $\mathbb{R}^4$) and $W({\sf D}_4)\subset W({\sf
F}_4) \subset {\rm GL}_4(\mathbb{R})$ (compare Planche IV and
Planche VIII in \cite{Bou}).

For the reader's convenience, we reproduce some of the calculations
from~\cite{cor-k}.

Recall that $W({\sf C}_n)$ is a semidirect product $(\bbZ /2\bbZ
)^n\rtimes S_n$. We denote by $c_1,\dots ,c_n$ the natural
generators of $(\bbZ /2\bbZ)^n$.

Let us first discuss the case where $R$ is of type ${\sf C}_{3}$. We
choose $H=\left< c_1c_3,c_2(13)\right> =\left< a,b\right> \subset
W({\sf C}_3)$. In the basis $\alp _1=\eps _1-\eps _2$, $\alp _2=\eps
_2-\eps _3$, $\alp _3=\eps _2+\eps _3$, the group $H$ acts on
$M=Q({\sf C}_3)$ as follows:
$$
a\colon \left\{\hskip -1mm \begin{array}{ccl}
\alp _1  & \hskip -2mm \mapsto  & \hskip -2mm -\alp _1-\alp _2-\alp _3,  \\
\alp _2  & \hskip -2mm \mapsto  & \hskip -2mm  \alp _3,  \\
\alp _3  & \hskip -2mm \mapsto  & \hskip -2mm  \alp _2,
\end{array} \right.
\qquad b\colon \left\{\hskip -1mm \begin{array}{ccl}
\alp _1  & \hskip -2mm \mapsto  & \hskip -2mm  \alp _3,  \\
\alp _2  & \hskip -2mm \mapsto  & \hskip -2mm -\alp _1-\alp _2-\alp _3, \\
\alp _3  & \hskip -2mm \mapsto  &  \hskip -2mm \alp _1.
\end{array} \right.
$$
This coincides with the standard formulas for $J_H$.

Let us now discuss the case where $R$ is of type ${\sf D}_{4}$. In
$\mathbb R ^4$ equipped with the standard basis $\eps _1,\dots ,\eps
_4$, we consider $M=Q({\sf D}_4)$ with $\bbZ$-basis $\alp _1=\eps
_1-\eps _2$, $\alp _2=\eps _2-\eps _3$, $\alp _3=\eps _3-\eps _4$,
$\alp _4=\eps _3+\eps _4$. The Weyl group $W({\sf D}_4)$ can be
identified with the subgroup of $W({\sf C}_4)$ consisting of the
elements with even numbers of $c_i$'s. We choose $H=\left< c_3c_4,
c_1c_2(34)\right>$. The group $H$ acting on $M$ respects
$V'=\left<\eps _2,\eps _3, \eps _4\right> = \left<\alp _2, \alp _3,
\alp _4\right>$, and $R'= R\cap V'$ is of type ${\sf D}_3$.
Moreover, $H$ respects the one-dimensional $\bbZ$-module generated
by $\alp _1$: indeed, $c_3c_4$ fixes $\alp _1$ and $c_1c_2(34)$
sends $\alp _1$ to $-\alp _1$. Therefore the $H$-lattice $M$
decomposes into a direct sum of a one-dimensional lattice and a
three-dimensional lattice. It remains to note that the latter
three-dimensional lattice $J$ is isomorphic to $J_H$. To see that,
we observe that the action of $c_1c_2(34)$ on $J$ coincides with the
action of $c_2(34)\in W({\sf C}_3)$ on $Q({\sf C}_3)=Q({\sf D}_3)$,
and we are led (up to permutation of indices) to the former case.

This completes the proof of Claim~\ref{claim1}, hence of
Proposition~\ref{prop5.3}, hence of Theorems~\ref{thm1.modified}(b)
and~\ref{thm1}(b).
\end{proof}

\begin{remark} Our proof of Theorem~\ref{thm1.modified}(b)
actually establishes the following stronger assertion.

\begin{prop} Let $G$ be a simple simply connected linear
algebraic group over $k$ which is not of type ${\sf A}_{n}, {\sf
C}_{n}$ or ${\sf G}_{2}$. Let $T_{\rm gen}$ be the generic torus of
$\,G$. Recall that $T_{\rm gen}$ is defined over the field $K_{\rm
gen} = k(G/N)$; cf. {\rm\S\ref{thegenerictorus}}. Then $(G_{K_{\rm
gen}}/T_{\rm gen}) \times_{K_{\rm gen}} Y$ is not rational over
$K_{\rm gen} $ for any $K_{\rm gen} $-variety $Y$.
\end{prop}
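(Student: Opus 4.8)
The plan is to refine the non-stable-rationality argument of Proposition~\ref{prop5.3} into a birational invariant that obstructs rationality of $X\times_{K_{\rm gen}}Y$ for \emph{every} auxiliary variety $Y$, not merely for $Y=\bbA^N$. Write $K=K_{\rm gen}$, $T=T_{\rm gen}$, and $X=G_{K}/T$, so that $T^*\cong P({\sf R})$ as a $W$-lattice, the Galois action over $K$ factoring through the splitting field $E/K$ of $T$, which is Galois with group $W$. Suppose, for contradiction, that $X\times_{K}Y$ is $K$-rational for some $K$-variety $Y$. Since $X$ is geometrically integral, $X\times_{K}Y$ can be $K$-rational only if $Y$ is geometrically integral; after shrinking, I may assume $Y$ is smooth and geometrically integral over $K$.

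First I would localise the obstruction, since over $K$ itself the relevant group $\Sha^1_{\omega}(W,P({\sf R}))$ may vanish. By Proposition~\ref{prop5.3} there is a subgroup $H\cong(\bbZ/2\bbZ)^2$ of $W$ with $\Sha^1_{\omega}(H,P({\sf R}))\neq 0$. Let $F=E^{H}$ be the corresponding fixed field, so that $E/F$ is Galois with group $H$ and the Galois action on $T^*$ over $F$ factors through $H$. Theorem~\ref{prop4.4}(a) together with Remark~\ref{Brauergp}, applied over $F$ (where $G_F$ is still semisimple simply connected and $T_F$ a maximal torus), yields an isomorphism $\Br (X_{\rm c})_{F}/\Br F\cong\Sha^1_{\omega}(F,T^*)=\Sha^1_{\omega}(H,P({\sf R}))\neq 0$, for $X_{\rm c}$ a smooth compactification of $X$. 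I fix a nonzero class $\alpha\in\Br(X_{\rm c})_{F}$, nonconstant modulo $\Br F$.

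Next I would base change the assumed rationality to $F$: as $(X\times_{K}Y)_F=X_F\times_F Y_F$, rationality of $X\times_{K}Y$ over $K$ forces $X_F\times_F Y_F$ to be $F$-rational, whence $\Br_{\rm nr}\bigl(F(X_F\times_F Y_F)/F\bigr)=\Br F$. I contradict this by exhibiting a nonconstant unramified class. Pulling $\alpha$ back along the first projection $\pr_1\colon X_F\times_F Y_F\to X_F$ gives $\alpha_M:=\pr_1^*\alpha$; being the pullback of an unramified class along a dominant morphism of smooth $F$-varieties, it lies in $\Br_{\rm nr}\bigl(F(X_F\times_F Y_F)/F\bigr)$. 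To see it is nonconstant I restrict to the generic fibre of the second projection, i.e.\ base change along $F\hookrightarrow M:=F(Y_F)$: the restriction of $\alpha_M$ is the image of $\alpha$ under $\Br(X_{\rm c})_F/\Br F\to\Br(X_{\rm c})_M/\Br M$, i.e.\ under $\Sha^1_{\omega}(F,T^*)\to\Sha^1_{\omega}(M,T^*)$. Here the decisive point is that $Y_F$ is geometrically integral, so $M/F$ is a regular extension and $E$ is linearly disjoint from $M$ over $F$; hence $E\otimes_F M$ is a field and $\Gal(EM/M)\xrightarrow{\cong}\Gal(E/F)=H$. Consequently $\Sha^1_{\omega}(M,T^*)=\Sha^1_{\omega}(H,P({\sf R}))$ and the restriction map is an isomorphism, carrying $\alpha\neq 0$ to a nonzero class. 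Thus $\alpha_M\notin\Br F$, contradicting the $F$-rationality of $X_F\times_F Y_F$, and the proposition follows.

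The step I expect to be the main obstacle is the verification that $\alpha_M$ is unramified over the \emph{small} field $F$, and not merely over $M=F(Y_F)$: the inclusion of unramified groups runs $\Br_{\rm nr}(\,\cdot\,/F)\subseteq\Br_{\rm nr}(\,\cdot\,/M)$, so producing a class in the smaller group (the genuine obstruction to $F$-rationality) is precisely what rules out all $Y$ simultaneously. This rests on the functoriality of the unramified Brauer group under pullback by $\pr_1$, combined with the purity and resolution-of-singularities inputs available in characteristic zero. The linear-disjointness identification $\Gal(EM/M)\cong H$, which prevents $\Sha^1_{\omega}$ from collapsing after the generic-fibre restriction, is the second point requiring care.
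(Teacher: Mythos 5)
Your proposal is correct, and its backbone coincides with the paper's: both arguments produce, via Proposition~\ref{prop5.3} combined with Theorem~\ref{prop4.4}(a) and Remark~\ref{Brauergp}, a finite extension of $K_{\rm gen}$ (your $F=E^{H}$, the paper's $M$) over which $\Br (X_{\rm c})_{F}/\Br F\cong \Sha^1_{\omega}(H,P({\sf R}))\neq 0$, and then argue that rationality of $X\times_{K_{\rm gen}}Y$ is incompatible with this nonvanishing. Where you diverge is in the last step: the paper disposes of it in one sentence by citing \cite[\S 1, Prop.~5.7 and Remark~5.8]{chili} --- nonvanishing of $\Br (X_{\rm c})_{M}/\Br M$ for some field extension $M$ means $X$ is not even retract rational, whereas a direct factor of a rational variety is retract rational --- while you unfold that citation into a self-contained argument: base change to $F$, pull the class back along $\pr_1$, and detect nonconstancy by restricting to the generic fibre of $\pr_2$, where linear disjointness of $E$ and $M=F(Y_F)$ (legitimate, since rationality of $X\times Y$ forces $Y$ to be geometrically integral, so $M/F$ is regular) identifies $\Gal(EM/M)$ with $H$ and makes $\Sha^1_{\omega}(F,T^*)\to\Sha^1_{\omega}(M,T^*)$ an isomorphism; here one also uses that $W$, hence $H$, acts faithfully on $P({\sf R})$, so the image of the absolute Galois group of $M$ in $\Aut(T^*)$ really is $H$. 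Your two flagged ``obstacles'' are genuine but handled correctly: for unramifiedness of $\pr_1^*\alpha$ over the small field $F$, the cleanest justification is to choose a smooth compactification $Y_{\rm c}$ of $Y_F$ (available in characteristic zero) and note that $\pr_1^*\alpha$ lives in $\Br\bigl((X_{\rm c})_F\times_F Y_{\rm c}\bigr)$, the Brauer group of a smooth proper model; and the identification $\Br (X_{\rm c})_{L}/\Br L\cong\Sha^1_{\omega}(L,T^*)$ of Remark~\ref{Brauergp} implicitly uses that $X=G/T$ has an $L$-point (the identity coset), which holds here. In sum, your route is the same in substance but more elementary in execution --- you essentially reprove the Saltman/Colliot-Th\'el\`ene--Sansuc fact the paper cites --- at the cost of length; the paper's citation-based version is shorter and makes the stronger conclusion (failure of retract rationality of $G_{K_{\rm gen}}/T_{\rm gen}$) explicit, though your argument yields that too with trivial modifications.
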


Indeed, let $X_{\rm c}$ be a smooth $K_{\rm gen} $-compactification
of $X = G_{K_{\rm gen} }/T_{\rm gen}$. Combining
Theorem~\ref{prop4.4}, Remark \ref{Brauergp}  and
Proposition~\ref{prop5.3} we find that there exists a finite field
extension $M/K_{\rm gen} $ such that $\Br (X_{{\rm c}})_ M/\Br M
\neq 0$. On the other hand, if there exists a $K_{\rm gen} $-variety
$Y$ such that $X \times_{K_{\rm gen} }Y$ is $K_{\rm gen}
$-birationally isomorphic  to projective space, then $\Br (X_{{\rm
c}})_ M/\Br M =0$ for any field extension $M/K_{\rm gen} $. As a
matter of fact, the nonvanishing of $\Br (X_{{\rm c}})_ M/\Br M$
implies that the $K_{\rm gen} $-variety $X=G_{K_{\rm gen} }/T_{\rm
gen}$ is not even retract rational (a concept due to D. Saltman);
see \cite[\S 1, Prop. 5.7 and Remark 5.8]{chili}. \qed
\end{remark}

\section{Weight lattices for root systems of types ${\sf A}_n$,
${\sf C}_n$, and ${\sf G}_2$} \label{sect5}

In this section we shall prove the following converse to
Proposition~\ref{prop5.3}.

\begin{prop} \label{prop.weight-lattice}
Let ${R}$ be a   reduced, irreducible root system and $P({R})$ be
the weight lattice equipped with the action of the Weyl group
$W=W({R})$. If  $R$ is of type ${\sf A}_n $, ${\sf C}_n$ or ${\sf
G}_2$, then there exists an exact sequence of $\,W$-lattices
$$0 \to P_{2} \to P_{1} \to P({R}) \to 0 $$
with $P_{1}$ and $P_{2}$ permutation lattices.
\end{prop}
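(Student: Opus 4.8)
The plan is to treat the three types ${\sf A}_n$, ${\sf C}_n$, and ${\sf G}_2$ separately, since in each the weight lattice $P(R)$, viewed as a $W$-lattice, admits a permutation resolution produced by a different device. At the outset I would record the standard descriptions (see \cite[Planches~I,~III,~IX]{Bou}): for ${\sf A}_n$ one has $W = S_{n+1}$ and $P({\sf A}_n) \cong \bbZ^{n+1}/\bbZ\cdot(1,\dots,1)$ with $W$ permuting coordinates; for ${\sf C}_n$ one has $W = (\bbZ/2\bbZ)^n \rtimes S_n$ acting by signed permutations and $P({\sf C}_n) \cong \bbZ^n$ with the signed-permutation action, since the fundamental weights $\varpi_i = \eps_1 + \cdots + \eps_i$ generate all of $\bbZ^n$; and for ${\sf G}_2$ the lattice $P({\sf G}_2) = Q({\sf G}_2)$ has rank $2$.

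For type ${\sf A}_n$ the resolution is immediate. The orthogonal projection $\bbR^{n+1} \to V$ onto the hyperplane $\sum x_i = 0$ carries $\bbZ^{n+1}$ onto $P({\sf A}_n)$ with kernel the diagonal $\bbZ\cdot(1,\dots,1)$, giving a short exact sequence of $W$-lattices
$$0 \to \bbZ \to \bbZ^{n+1} \to P({\sf A}_n) \to 0,$$
in which $\bbZ$ carries the trivial action (a rank-one permutation lattice) and $\bbZ^{n+1} = \bbZ[W/S_n]$ is a transitive permutation lattice. So one may take $P_2 = \bbZ$ and $P_1 = \bbZ^{n+1}$.

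The type ${\sf C}_n$ case requires an actual construction, and is where I expect the only real work to lie. I would introduce the permutation lattice $P_1 := \bbZ[\{\pm\eps_1,\dots,\pm\eps_n\}] \cong \bbZ^{2n}$ spanned by formal symbols $[\pm\eps_i]$, on which $W$ acts by its natural permutation of the $2n$ signed coordinate vectors; this is manifestly a transitive permutation $W$-lattice. The evaluation map $\varphi\colon P_1 \to P({\sf C}_n) = \bbZ^n$, $[\pm\eps_i] \mapsto \pm\eps_i$, is $W$-equivariant and surjective. I would then check that $\ker\varphi$ is freely spanned by the elements $f_i := [+\eps_i] + [-\eps_i]$ for $i = 1,\dots,n$: an element $\sum_i(a_i[+\eps_i] + b_i[-\eps_i])$ lies in the kernel exactly when $a_i = b_i$ for all $i$. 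Each sign change fixes the corresponding $f_i$ while $S_n$ permutes the $f_i$, so $\ker\varphi = \bigoplus_i \bbZ f_i \cong \bbZ[W/W']$ with $W' = (\bbZ/2\bbZ)^n \rtimes S_{n-1}$ the stabilizer of $f_1$, again a permutation lattice. Setting $P_2 = \ker\varphi$ yields the required sequence $0 \to P_2 \to P_1 \to P({\sf C}_n) \to 0$.

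Finally, for ${\sf G}_2$ I would invoke the rank bound recorded in \S\ref{sect2.2}: since $P({\sf G}_2)$ is a $W$-lattice of rank $2$, Voskresenski\u{\i}'s theorem (cf.\ the displayed consequence for lattices of rank at most $2$ in \S\ref{sect2.2}) furnishes directly an exact sequence $0 \to P_2 \to P_1 \to P({\sf G}_2) \to 0$ with $P_1, P_2$ permutation lattices. This disposes of all three cases; the only nontrivial verification is the identification of $\ker\varphi$ in the ${\sf C}_n$ case, the remaining steps being either immediate or a citation.
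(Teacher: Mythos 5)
Your proposal is correct and takes essentially the same route as the paper: the same case-by-case permutation resolutions, with ${\sf G}_2$ disposed of by the rank-$2$ consequence of Voskresenski\u{\i}'s theorem from \S\ref{sect2.2}, and your ${\sf A}_n$ and ${\sf C}_n$ sequences coinciding with the paper's \eqref{A_{n}sequence} and \eqref{C_{n}sequence}. The only difference is presentational: you build the ${\sf C}_n$ resolution directly, as the evaluation map from the permutation lattice on $\{\pm\eps_1,\dots,\pm\eps_n\}$ with kernel $\bigoplus_i \bbZ([+\eps_i]+[-\eps_i])$ computed by hand, whereas the paper obtains the identical sequence by dualizing the ${\sf B}_n$ root-lattice sequence read off from the Bourbaki tables.
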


\begin{proof} Suppose ${R}$ is of type  ${\sf G}_2$.  Here
 the $W$-lattice $M = P({\sf G}_2)$ is of rank $2$.
Thus, as we pointed out at the end of Section~\ref{sect2}, $M$ fits
into an exact sequence
\[ 0 \to P_{2} \to P_{1} \to M \to 0 \, , \]
with $P_1$ and $P_2$ permutation.

Now suppose ${R}$ is of type ${\sf A}_{n}$. Then $W=S_{n+1}$. {}From
the Bourbaki tables~\cite{Bou} we get the exact sequence of
$S_{n+1}$-lattices
$$\textstyle 0 \to Q({\sf A}_{n}) \to  \bigoplus_{{i=1}}^{n+1} \bbZ
\varepsilon_{i} \to \bbZ \to 0, $$ where the action of $S_{n+1}$ on
the middle term is by permutation, the action on the right hand side
$\bbZ$ is trivial and the right hand side map to $\bbZ$ is
augmentation, i.e., summation of coefficients.

If one dualizes this sequence one gets an exact sequence of
$S_{n+1}$-lattices
\begin{equation}\label{A_{n}sequence}
\textstyle 0 \to  \bbZ  \to \bigoplus_{{i=1}}^{n+1} \bbZ
\varepsilon_{i} \to P({\sf A}_{n}) \to 0,
\end{equation}
where the action of $S_{n+1}$ on the middle term is by permutation,
the action on the left hand side $\bbZ$ is trivial and the map with
source $\bbZ$
 sends $1$
to the sum of the $\varepsilon_{i}$.

Finally suppose ${R}$ is of type ${\sf C}_{n} \, (n \geqslant 2)$.
Then $W$ is the semidirect product of $S_{n}$ by $ (\bbZ/2\bbZ)^n$.
Let us first look at the ${\sf B}_{n}$-table in~\cite{Bou}. There is
an exact sequence of $W({\sf B}_{n})$-lattices
$$\textstyle 0 \to Q({\sf B}_{n}) \to
\bigoplus_{i=1}^n (\bbZ a_{i } \oplus \bbZ b_{i}) \to
\bigoplus_{i=1}^n \bbZ c_{i} \to 0,$$ where $a_{i}$  and $b_{i}$ are
mapped to $c_{i}$, and the action of $W$ is as follows.  On the
right hand lattice $\bigoplus_{i=1}^n \bbZ c_{i}$, the action is the
permutation action of the quotient $S_{n}$. On the middle lattice,
$S_{n}$ acts by naturally permuting the $a_{i}$ and the $b_{i}$.  An
element $(\alpha_{1}, \ldots, \alpha_{n}) \in (\bbZ/2\bbZ)^n$ fixes
$a_{i}$ and $b_{i}$ if $\alpha_{i}=0$ and it permutes them if
$\alpha_{i}=1$.
   In Bourbaki's notation for
${\sf B}_{n}$, we have $\varepsilon_{i}=a_{i}-b_{i}$. If one
dualizes the above sequence, one gets the exact sequence of $W({\sf
C}_{n})$-lattices
\begin{equation}\label{C_{n}sequence}
\textstyle 0 \to \bigoplus_{i=1}^n \bbZ \gamma_{i} \to
\bigoplus_{i=1}^n (\bbZ \alpha_{i } \oplus \bbZ \beta_{i}) \to
P({\sf C}_{n}) \to 0,
\end{equation}
where the two left lattices are permutation lattices.
 \end{proof}

\begin{prop}\label{stablerationalityAC}
Let $G$ be a split, simply connected  simple group of type ${\sf
A}_n$ or ${\sf C}_n$ {\rm(}i.e., $G = \SL_n$ or $\Sympl_{2n})$. Then
the field extensions $k(G)/k(G)^G$ and $k(\g)/k(\g)^G$ are stably
pure.
\end{prop}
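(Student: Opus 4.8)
The plan is to assemble the results already established in the excerpt, reducing the stable purity of $k(G)/k(G)^G$ and $k(\g)/k(\g)^G$ to the stable rationality of the generic homogeneous space $G_{K_{\rm gen}}/T_{\rm gen}$, and then to verify that stable rationality through the lattice criterion of Corollary~\ref{cor.generic}.

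First I would record two structural facts about $G = \SL_n$ and $G = \Sympl_{2n}$. Each is \emph{split}, hence quasisplit. Each is also a \emph{special} group in the sense of \S\ref{sect2.3}: indeed $H^1(L, \SL_n)$ and $H^1(L, \Sympl_{2n})$ are trivial for every field extension $L/k$, as recalled there (classically, via Hilbert 90 together with the theory of central simple algebras for $\SL_n$, and the equivalence of all nondegenerate alternating forms of a fixed rank for $\Sympl_{2n}$). The associated root system $R$ is of type ${\sf A}$ or ${\sf C}$ respectively.

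Next I would invoke Proposition~\ref{prop.weight-lattice}: since $R$ is of type ${\sf A}$ or ${\sf C}$, there is an exact sequence of $W$-lattices
\[ 0 \to P_2 \to P_1 \to P(R) \to 0 \]
with $P_1$ and $P_2$ permutation lattices. Because $G$ is special, the converse direction of Corollary~\ref{cor.generic}, i.e.\ part~(b) (which rests precisely on Proposition~\ref{prop4.4bis}, where the hypotheses that $G$ be special and $K_{\rm gen}$-rational are used), then yields that $G_{K_{\rm gen}}/T_{\rm gen}$ is stably rational over $K_{\rm gen}$. This is exactly condition (e) of Theorem~\ref{thm.reduction} in its ``stably rational'' form.

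Finally, since $G$ is quasisplit, Theorem~\ref{thm.reduction} guarantees that all five conditions (a)--(e) are equivalent; hence from (e) I obtain (a) and (b), namely that $k(G)/k(G)^G$ and $k(\g)/k(\g)^G$ are stably pure, as desired. I do not expect a genuine obstacle here, because the substantive work has already been done in the lattice computation (Proposition~\ref{prop.weight-lattice}) and in the converse implication of Corollary~\ref{cor.generic}; the only point requiring care is to confirm that $\SL_n$ and $\Sympl_{2n}$ are special over an arbitrary base field of characteristic zero, so that Corollary~\ref{cor.generic}(b) genuinely applies.
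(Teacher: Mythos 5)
Your proposal is correct and follows essentially the same route as the paper's own proof, which likewise combines the specialness of $\SL_n$ and $\Sympl_{2n}$ with Propositions~\ref{prop4.4bis} and~\ref{prop.weight-lattice} to get stable rationality of $G_{K_{\rm gen}}/T_{\rm gen}$, and then concludes via Theorem~\ref{thm.reduction}. Your only (harmless) variation is to cite Corollary~\ref{cor.generic}(b) --- the packaged form of Proposition~\ref{prop4.4bis} applied to the generic torus --- and to make explicit that the passage from condition (e) back to (a) and (b) uses the quasisplit hypothesis, which the paper leaves implicit.
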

\begin{proof}
Since these groups are special, Propositions ~\ref{prop4.4bis}
and~\ref{prop.weight-lattice} imply that $G_K/T_{\rm gen}$ is stably
rational over $K_{\rm gen}$ (or equivalently,   $k(\g)$ is stably
rational over $k(\g)^{G}$). The statement then follows from Theorem
\ref{thm.reduction}.
\end{proof}

\begin{remark}
This is weaker than the rationality assertion of
Theorem~\ref{thm1.modified}(a) (or equivalently, of
Theorem~\ref{thm1}(a)), which will be proved in the next section. In
the meantime, we remark that the same argument cannot be used to
show that $G_K/T_{\rm gen}$ is stably rational (or equivalently,
$k(\g)$ is stably rational over $k(\g)^{G}$) for the split ${\rm
G}_2$, because this group is not special and
Proposition~\ref{prop4.4bis} does not apply to it. In fact, in this
case we do not know whether or not $G_K/T_{\rm gen}$ is stably
rational over $K_{\rm gen}$.
\end{remark}

\section{Rationality}
\label{sect.rat}

We now turn to the proof of Theorem~\ref{thm1.modified}(a) (or
equivalently, of Theorem~\ref{thm1}(a)). Our argument will be based
on the rationality criterion of Lemma~\ref{lem3.15}(c) below. For
groups of type ${\sf A}_n$, Theorem~\ref{thm1.modified}(a) will be
an easy consequence of this criterion. For groups of type ${\sf
C}_n$ the proof proceeds along the same lines but requires a more
elaborate argument.

Let $K$ be a field of characteristic zero,  let $G$ be a linear
algebraic group over $K$, and let $H_1, H_2$ be closed
$K$-subgroups. The actions of $H_1$ and $H_2$ on $G$ by respectively
left and right translation commute, thus giving rise to an $(H_1
\times H_2)$-action on $G$. The action of $H_{1}$ on $G$ defines an
$H_{1}$-torsor $\pi_{1} \colon G \to  H_{1} \ssetminus G$, and the
action of $H_{2}$ defines an $H_{2}$-torsor $\pi_{2} \colon G \to
G/H_{2}$.

Using Rosenlicht's theorem (see \S \ref{sect.quotients}), one may
find a $(H_1 \times H_2)$-stable dense open subset $U$ of $G$ such
that the action of $H_1 \times H_2$ on $U$ mods out to a {\it
geometric quotient} $U \to V$ which factorizes through $U \to U_{1}
\to V$ and $U \to U_{2} \to V$, where $U_{1 } \subset H_{1}
\ssetminus G$ and $U_{2} \subset G/H_{2}$ are open sets and $U \to
U_{1}$, respectively,\;$U \to U_{2}$ is an $H_{1}$-torsor,
respectively,\;an $H_{2}$-torsor.

In this section we shall indulge in  the following notation. We
shall adopt the double coset notation $H_1 \ssetminus G/ H_2$ for
{\it some} $V$ as above. In  particular, we have
\[ K(H_1 \ssetminus G/H_2) \cong  K(G)^{H_1 \times H_2} \,. \]
We have a commutative diagram of rational maps, where, as usual,
solid arrows denote regular maps and dotted arrows denote rational
maps:
\begin{equation*}
\begin{matrix}
\xymatrix@R=3.4mm{  &  G \ar@{->}[dl]_{\pi_1} \ar@{-->}[dd]^{\pi}
\ar@{->}[dr]^{\pi_2} & \cr H_1\ssetminus G \ar@{-->}[dr] & &  G/H_{2
} \ar@{-->}[dl]     \cr
 & H_1 \ssetminus G/H_2
 & \cr }
 \end{matrix}\quad .
 \end{equation*}

Let $Z(G)$ denote the centre of $G$.

\begin{lem} \label{lem3.15}
Let $K$ be a field of characteristic zero and let $G$ be a simple
$K$-group. Suppose $G$ has subgroups $H$ and $T$, where $T$ is a
torus and $H \cap Z(G) = \{ 1\}$. Then

\begin{enumerate}
 \item[\rm(a)] the
 $H$-action on $G/T$ is generically free;

\item[\rm(b)] $\dim(H \ssetminus G /T) = \dim(G) -
\dim(H) - \dim(T)$;

\item[\rm(c)] if $H$ is special  and both $H$, $H
\ssetminus G/T$ are $K$-rational, then $G/T$ is also $K$-rational.
\end{enumerate}
\end{lem}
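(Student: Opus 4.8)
The plan is to treat (a) and (b) as statements about the geometry of the $H$-action on $G/T$ by left translation, and to deduce the rationality assertion (c) from (a) together with Lemma~\ref{lem3.1}. The starting point for all three parts is the elementary computation that the $H$-stabilizer of a point $gT \in G/T$ is $H \cap gTg^{-1}$; equivalently, for the $(H\times T)$-action on $G$ ($H$ on the left, $T$ on the right) the stabilizer of $g$ is carried by conjugation onto $T \cap g^{-1}Hg$. Hence part (a) is exactly the assertion that $H \cap gTg^{-1} = \{1\}$ for all $g$ in a dense open subset of $G$.

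To prove (a) I would first rule out a single nontrivial stabilizing element: if some $h \in H$ with $h \ne 1$ satisfied $g^{-1}hg \in T$ for a dense set of $g$, then by Zariski density the whole conjugacy class of $h$ would lie in $T$, so $h \in \bigcap_{g \in G} gTg^{-1}$. As $T$ sits in a maximal torus and the intersection of all conjugates of a maximal torus of the connected group $G$ is $Z(G)$, this forces $h \in H \cap Z(G) = \{1\}$, a contradiction. To turn this into generic freeness I would introduce the incidence variety $J = \{(t,g) \in (T \ssetminus \{1\}) \times G : gtg^{-1} \in H\}$, whose image under the projection to $G$ is precisely the locus of bad points; fibering $J$ over $T$ and using that each fibre $\{g : gtg^{-1}\in H\}$ is a finite union of cosets of the centralizer $C_G(t)$, one estimates $\dim J$ and shows it is strictly less than $\dim G$, so that the bad locus is not dense. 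I expect this dimension estimate to be the main obstacle, since it is where simplicity of $G$ and the centrality bound above must be combined to keep the relevant fixed loci proper. Granting (a), part (b) is then immediate: the generic $H$-orbit in $G/T$ is free, hence of dimension $\dim H$, so the geometric quotient has dimension $\dim(G/T) - \dim H = \dim G - \dim T - \dim H$.

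For (c) the idea is to linearize. Because $H$ is special and, by (a), acts generically freely on the geometrically integral variety $G/T$, Lemma~\ref{lem3.1} yields a dense $H$-stable open subset $U \subseteq G/T$ and an $H$-isomorphism $U \cong H \times_K Y$, where $Y$ is a $K$-variety with $K(Y) \cong K(G/T)^H = K(H \ssetminus G/T)$. Therefore $K(G/T) = K(U) = K(H \times_K Y)$. By hypothesis $H$ is $K$-rational and $H \ssetminus G/T$ is $K$-rational, so $Y$ is $K$-rational; since a product of two $K$-rational varieties is $K$-rational, $H \times_K Y$ is $K$-rational and hence so is $G/T$. The one point that must be verified carefully here is the identification of the base $Y$ of the trivialized torsor with the double-coset quotient $H \ssetminus G/T$, which is exactly the function-field statement supplied by Lemma~\ref{lem3.1}.
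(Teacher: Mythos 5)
Your parts (b) and (c) are sound, and (c) is exactly the paper's argument: by (a) and Lemma~\ref{lem3.1}, $G/T$ is $K$-birationally isomorphic to $H \times_K (H \ssetminus G/T)$, and a product of $K$-rational varieties is $K$-rational. Your opening step in (a) is also correct and is in substance the paper's endgame: the set $\{g \in G : g^{-1}hg \in T\}$ is closed, so if it is dense then $h \in \bigcap_{g \in G} gTg^{-1}$, a proper closed normal subgroup of $G$, hence central by simplicity, hence trivial since $H \cap Z(G) = \{1\}$. The genuine gap is the passage from this single-element statement to generic freeness. Your incidence variety $J$ rests on the claim that the fibre $\{g : gtg^{-1} \in H\}$ is a finite union of cosets of $C_G(t)$; this is false in general. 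That fibre is the disjoint union of left cosets $g_h C_G(t)$ indexed by the points $h$ of $H \cap (G \cdot t)$, and $H \cap (G \cdot t)$ is a subvariety of $H$ that is typically of positive dimension --- indeed precisely in the application of this lemma in \S\ref{sect.rat}, where $H$ is the stabilizer of a nonzero vector in $\SL_{n+1}$ or $\Sympl_{2n}$, the intersection $H \cap G\cdot t$ is positive-dimensional for generic semisimple $t$. With the corrected fibre dimension $\dim(H \cap G\cdot t) + \dim C_G(t)$, concluding $\dim J < \dim G$ would require a uniform bound such as $\dim(H \cap G\cdot t) \leqslant \dim(G\cdot t) - \dim T$ for every $t \neq 1$ and every admissible pair $(H,T)$, for which you give no argument; as you yourself note, this estimate is the main obstacle, so the proof of (a) is incomplete exactly at its crux.

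The paper avoids all dimension counting by exploiting a structural fact special to torus actions. It first observes that (a) is equivalent to the right $T$-action on $H \ssetminus G$ being generically free. If that action were not generically free, then by Sumihiro's theorem there is an affine $T$-stable dense open subset, which embeds $T$-equivariantly as a closed subvariety of a $T$-module $V$ not contained in a proper submodule; a point of $V$ outside the union of the weight subspaces has stabilizer equal to the kernel of the $T$-action, so the generic stabilizer of a torus action is the \emph{kernel} of the action --- genuinely constant, not merely constant up to conjugacy. Hence a non-generically-free $T$-action has a nontrivial kernel $\Gamma \subset T$, and then $\Gamma \subset \bigcap_{g \in G} gHg^{-1}$, a proper normal subgroup of the simple group $G$, so $\Gamma \subset H \cap Z(G) = \{1\}$, a contradiction. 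If you replace your dimension count by this kernel fact (equivalently: a torus acting on an irreducible variety has only finitely many distinct stabilizer subgroups, so a nontrivial generic stabilizer yields a single $\gamma \neq 1$ fixing a dense set), your single-element argument completes the proof of (a), and (b) and (c) then go through as you wrote them.
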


\begin{proof}
To prove (a), we may pass to an algebraic closure $\overline{K}$ of
$K$ and thus assume, without loss of generality, that $K$ is
algebraically closed.

Note that conditions

\begin{enumerate}
\item[(a)] the left $H$-action on $G/T$ is generically
free;

\item[(${\rm a}^\prime$)] the right $T$-action on $H \!
\setminus \! G$ is generically free

\end{enumerate}
\noindent are equivalent. Indeed, (a) says that $H \cap g T g^{-1} =
\{ 1 \}$ for $g \in G$ in general position and (${\rm a}^\prime$)
says that $T \cap g H g^{-1} = \{ 1 \}$ for $g \in G$ in general
position. Thus (a) and (${\rm a}^\prime$) are equivalent, and it
suffices to prove (${\rm a}^\prime$).

 Assume to the contrary that the $T$-action on $H \!
 \setminus \! G$ is not generically free.
 By a result of Sumihiro \cite{sumihiro}
 there is an affine $T$-stable dense open set $U$ of $H \!
 \setminus \! G$. By the Embedding Theorem \cite[Theorem\,1.5]{popov-vinberg},
  $U$ is a $G$-stable closed irreducible subvariety
 of a finite-dimensional $T$-module $V$ not contained in
a proper $T$-submodule of $V$. Hence $U$  intersects the complement
to the union of weight spaces of $T$. But the stabilizer of a
point in this complement coincides with the kernel of the action of
$T$ on $V$. Thus the action of $T$
 on $U$, hence on $H \!
 \setminus \! G$
 has a nontrivial kernel $\Gamma \subset T$, cf.\;\cite[Sect.\,7.2, Prop.]{popov-vinberg}
 Then $\Gamma$ is contained in $N = \bigcap_{g
 \in G} gHg^{-1}$, which is a normal subgroup of $G$.
 Since  $N \subsetneq G$ and we are assuming that $G$ is simple,
 we conclude that $N \subset Z(G)$.
 Thus $\{ 1 \} \subsetneq  \Gamma \subset N = N \cap Z(G)
 \subset H \cap Z(G)$, contradicting our assumption
 that $H \cap Z(G) = \{ 1 \}$. This contradiction
 proves part (a).

\smallskip
By Theorem \ref{generalGS} (case $S=\{1\}$) and the standard formula
for the dimension of a variety fibred over another variety, (b)
follows from (a).

\smallskip
Let us now prove (c).  Part (a) allows us to apply
Lemma~\ref{lem3.1}
 to the left $H$-action on $G/T$. By Lemma~\ref{lem3.1}, $G/T$ is
$K$-birationally isomorphic to $H \times (H \ssetminus G/T)$. Since
we are assuming that both $H$ and $H \ssetminus G/T$ are
$K$-rational, so is $G/T$.
\end{proof}

We now proceed with the proof of Theorem~\ref{thm1.modified}(a). The
group $G$ is a split simply connected simple group of type ${\sf
A}_n$ or ${\sf C}_n$, over the field $k$. In the sequel we will set
$K = K_{\rm gen}$ and will work with the generic $K$-torus $T=T_{\rm
gen} \subset G_{K}$, as in~\S\ref{thegenerictorus}.

 \medskip
{\bf Type ${\sf A}_n$.} Let $H$ denote  the stabilizer of a non-zero
element for the natural action of $G= \SL_{n+1}$ on ${\mathbb
A}_{k}^{n+1}$. It is easy to see that $H$ is isomorphic to a
semidirect product $U \rtimes \SL_n$, for some unipotent group $U$
and that $H \cap Z(\SL_{n+1})=1$. By Lemma~\ref{lem3.15}(c), it
suffices to show that (i) $H$ is special, (ii) $H$ is $k$-rational,
and (iii) the ``double coset space" $H_K \ssetminus G_K /T_{\rm
gen}$ (whose definition is explained above) is $K$-rational. We now
proceed to prove (i), (ii) and (iii).

(i) For any field extension $F/k$, the natural map
\[ H^1(F, H) \to H^1(F, \SL_{n}) \]
is an isomorphism; see, e.g., \cite[Lemme 1.13]{sansuc}. Since
$\SL_{n}$ is a special group, we conclude that $H^1(F, H) \cong
H^1(F, \SL_n) = \{ 1 \}$, i.e., $H$ is special.

(ii) In characteristic zero, any unipotent group is special
(see~\cite[Prop.\;III.2.1.6]{serre-gc}) and rational (see,
e.g.,~\cite[Example 1.21]{lpr}). Viewing the natural projection $H
\to \SL_n$ as a $U$-torsor over $\SL_n$, we see that $H$ is
$k$-birationally isomorphic to $U \times \SL_n$. This shows that $H$
is $k$-rational.

(iii) $H_K \ssetminus G_K /T_{\rm gen}$ is a 1-dimensional
$K$-variety; see Lemma~\ref{lem3.15}(b). It is clearly unirational
over $K$ (it is covered by $G_K$). By L\"uroth's theorem  it is thus
$K$-rational.

\smallskip
This completes the proof of Theorem~\ref{thm1.modified} for groups
of type ${\sf A}_n$.

\medskip
{\bf Type ${\sf C}_n$}. Let  $G :=\Sympl_{2n}$. Once again, we let
$H$ be the $\Sympl_{2n}$-stabilizer of a non-zero vector $v \in
k^{2n}$ for the natural action of $G$ on ${\mathbb A}^{2n}$. It is
well known that $H$ is $k$-isomorphic to a semidirect product of $U
\rtimes \Sympl_{2n-2}$, where $U$ is a unipotent group defined over
$k$; see, e.g.,~\cite[pp.\,35--36]{weil} or~\cite[p.\,384]{igusa}.
Once again, by Lemma~\ref{lem3.15}(c), it suffices to show that
\begin{enumerate}
\item[(i)] $H$ is special; \item[(ii)] $H$ is
$k$-rational;
\item[(iii)] $H_K \ssetminus G_K
/T_{\rm gen}$ is $K$-rational.
    \end{enumerate}
\noindent The proofs of (i) and (ii) are exactly the same as for
type ${\sf A}$. In order to complete the proof of
Theorem~\ref{thm1.modified}(a) (or equivalently, of
Theorem~\ref{thm1}(a)), it thus remains to establish (iii), which we
now restate as a proposition.

\begin{prop} \label{prop2.type-c}
$H_K \ssetminus G_K /T_{\rm gen}$ is $K$-rational.
\end{prop}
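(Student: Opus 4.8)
The plan is to translate the double coset space into a quotient of the standard representation by $T_{\rm gen}$, and then to prove rationality by exhibiting the corresponding ring of invariants as a \emph{polynomial} ring.

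First I would make the geometric reduction. Let $V=k^{2n}$ be the standard representation of $G=\Sympl_{2n}$, on which $G$ acts transitively on $V\ssetminus\{0\}$, with $H=\Stab(v_{0})$ the stabilizer of a fixed nonzero $v_{0}$. The map $Hg\mapsto g^{-1}v_{0}$ identifies $H\ssetminus G$ with $V\ssetminus\{0\}$ and carries the right translation action of $T=T_{\rm gen}$ to the linear action of $T\subset G_{K}\subset\GL(V_{K})$ on $V_{K}$ (up to inversion, which is irrelevant for the quotient). Hence $H_{K}\ssetminus G_{K}/T_{\rm gen}$ is birational to $(V_{K}\ssetminus\{0\})/T$, and its function field is $K(V)^{T}$. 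It therefore suffices to prove that $K(V)^{T}$ is purely transcendental over $K=K_{\rm gen}$.

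Next I would compute the invariants over a splitting field. Let $L/K$ be the Galois splitting field of $T$, so that $\Gal(L/K)$ is the Weyl group $W=W({\sf C}_{n})$ and, by \S\ref{thegenerictorus}, $T^{*}=P({\sf C}_{n})=\bigoplus_{i=1}^{n}\bbZ\eps_{i}$ with the signed-permutation action. Over $L$ the torus $T_{L}$ is split and acts on $V_{L}$ with the weights $\pm\eps_{1},\dots,\pm\eps_{n}$, each of multiplicity one; choose weight coordinates $x_{1},\dots,x_{n},y_{1},\dots,y_{n}$. The only pairs of weights summing to zero are $\{\eps_{i},-\eps_{i}\}$, so the degree-two weight-zero monomials are exactly $z_{i}:=x_{i}y_{i}$, and $L[V]^{T_{L}}=L[z_{1},\dots,z_{n}]$ is a polynomial ring. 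These $z_{i}$ separate generic $T$-orbits, so the generic orbit is closed and $L(V)^{T_{L}}=L(z_{1},\dots,z_{n})$.

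The decisive step is the descent to $K$. Since $T$ is defined over $K$, the Galois action of $\Gal(L/K)$ on $L[V]$ preserves both degree and $T$-weight, so the degree-two weight-zero subspace is Galois-stable; concretely $W$ permutes the $z_{i}$ through the quotient $W\to S_{n}$, the sign changes fixing each $z_{i}$. Setting $W_{0}:=(K[V]^{T})_{2}$, faithfully flat base change gives $W_{0}\otimes_{K}L=\bigoplus_{i=1}^{n}Lz_{i}$, an $n$-dimensional $K$-form of this subspace. Because $L[V]^{T_{L}}$ is generated by the algebraically independent elements $z_{i}$, descent shows that $K[V]^{T}=\Sym_{K}(W_{0})$ is a polynomial ring on the $n$-dimensional $K$-vector space $W_{0}$. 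Hence $V\catq T=\Spec K[V]^{T}\cong\bbA^{n}_{K}$, and since generic $T$-orbits are closed, $K(V)^{T}$ is the fraction field of $\Sym_{K}(W_{0})$, which is purely transcendental of degree $n$ over $K$. This yields the $K$-rationality of $(V_{K}\ssetminus\{0\})/T$, hence of $H_{K}\ssetminus G_{K}/T_{\rm gen}$. I expect the descent to be the only delicate point: one must check that forming $T$-invariants commutes with the base change $K\to L$ (true since $T$ is linearly reductive in characteristic zero), that the weight-zero degree-two invariants descend to a $K$-subspace of the correct dimension, and that generation of the invariant ring descends. The feature that makes this work — and that fails for the types treated in \S\ref{sect.non-rationality} — is that the weight set $\{\pm\eps_{i}\}$ of the standard representation is stable under negation with $\{\eps_{i},-\eps_{i}\}$ its only zero-sum pairs, so $K[V]^{T}$ is generated in a single degree by a space that $W$ merely permutes, forcing it to be polynomial.
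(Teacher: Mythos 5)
Your proof is correct, and after the opening reduction it takes a genuinely different route from the paper's. Both arguments begin identically: Witt's theorem identifies $H_K \ssetminus G_K$ with $\bbA^{2n}_K \ssetminus \{0\}$, reducing the claim to the $K$-rationality of $\bbA^{2n}/T_{\rm gen}$. From there the paper stays inside the lattice formalism of \S\S 6--7: it records only that the $2n$ weight characters give a surjection $\tau\colon P_1 \to P({\sf C}_n)$ from a rank-$2n$ permutation $W$-lattice (using nothing about the weights beyond faithfulness of the action), dualizes to $1 \to T_{\rm gen} \to T_1 \to T_2 \to 1$, uses the dense $T_1$-orbit to identify $\bbA^{2n}/T_{\rm gen}$ birationally with the $n$-dimensional torus $T_2$, and then proves $T_2$ quasitrivial via the combinatorial Lemma~\ref{lem.elementary}, which classifies \emph{all} such permutation resolutions of $P({\sf C}_n)$ for $n \geqslant 3$ (the cases $n \leqslant 2$ need a separate appeal to the rationality of tori of dimension at most $2$). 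You instead use the explicit weight datum of the standard representation --- weights $\pm\eps_i$, each of multiplicity one, which is legitimate since $T_{\rm gen}\times_K L$ is a split maximal torus of $\Sympl_{2n,L}$ and the Galois action on $T_{\rm gen}^{*}$ is the natural $W$-action --- and then descend the graded invariant ring: $L[V]^{T_L}=L[z_1,\dots,z_n]$ with $z_i=x_iy_i$, and faithfully flat descent of the degree-two piece gives $K[V]^{T}\cong\operatorname{Sym}_K(W_0)$, a polynomial ring, so $\bbA^{2n}\catq T_{\rm gen}\cong\bbA^n_K$. Your route is more elementary and uniform in $n$ (no case split, no abstract classification lemma) and yields the sharper conclusion that the quotient is literally an affine space; the paper's route avoids any explicit weight computation and produces Lemma~\ref{lem.elementary} as a structural statement in its own right, matching the permutation-resolution theme of Proposition~\ref{prop.weight-lattice}. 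Two points in your write-up deserve care, though neither is a gap: the assertion that $W$ literally permutes the $z_i$ with the sign changes acting trivially is only true up to scalars in $L^{\times}$ (the semilinear Galois action permutes the \emph{lines} $Lz_i$, and the basis cannot in general be normalized to kill the cocycle), but your actual argument never uses this --- it uses only the $K$-form $W_0=(K[V]^{T})_2$ and faithfully flat base change, which is airtight; and the passage from $K[V]^{T}$ polynomial to $K(V)^{T}=\operatorname{Frac}(K[V]^{T})$ purely transcendental does require that generic fibres of the quotient map be single closed orbits, which you note and which is immediate since the fibre over a point with all $z_i\neq 0$ is a single free $T$-orbit.
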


To prove the proposition, we first note that $H \ssetminus G$ is, by
definition, the $\Sympl_{2n}$-orbit of a non-zero element $v$ in
$\bbA^{2n}$.  By  Witt's extension theorem (\cite[Theorem
3.9]{Art}), this orbit is $\bbA^{2n} \setminus \{ 0 \}$. Thus the
group $\Sympl_{2n}$ acts on $H_K \ssetminus G_K$ (on the right) via
its natural, linear, action on $\bbA^{2n}_{K}$. Restricting this
action to $T_{\rm gen}$, we reduce our problem to showing that
$\bbA^{2n}/T_{\rm gen}$ is $K$-rational.

The $K$-torus   $T_{\rm gen}$ is split by a $W$-Galois extension
  and the character lattice of $T_{\rm gen}$ is the weight lattice $P({\sf C}_n)$
  with its $W$-action (see \S \ref{thegenerictorus}).
Here $W = (\bbZ/2 \bbZ)^n \rtimes S_n$ is the Weyl group of $G =
\Sympl_{2n}$. Over $L$ we can diagonalize the $T_{\rm gen}$-action
on $\bbA^{2n}$ in some $L$-basis $f_1, \dots, f_{2n}$. Let $\chi_1,
\dots, \chi_{2n}$ be the associated characters of $T_{\rm
gen}\times_K L$.  We have $t \cdot f_i = \chi_i(t) f_i$ for every $t
\in T_{\rm gen}(L)$. These characters are permuted by $W$; denote
the associated $W$-permutation  lattice of rank $2n$ by $P_{1}$.
That is, $W$ permutes a set of generators $a_1, \dots, a_{2n}$ of
$P_{1}$; sending $a_i$ to $\chi_i$, we obtain a morphism of
$W$-lattices
\[ \tau \colon P_1 \to P({\sf C}_n) \, . \]
Since $G = \Sympl_{2n}$ acts faithfully on $\bbA^{2n}$, so does
$T_{\rm gen} \subset G_{K }$;
 hence  $\tau$ is surjective and we obtain a sequence
of $W$-lattices
\[ 0  \to P_2 \to P_1 \to P({\sf C}_n) \to 0 \]
and the dual sequence
\[ 1 \to T_{\rm gen} \to T_1 \to T_2 \to 1 \]
of $K$-tori. The torus $T_{1}$ has a dense open orbit in
$\bbA^{2n}$; identifying $T_1$ with this orbit, we obtain the
following birational isomorphisms of $K$-varieties:
\[ \bbA_K^{2n}/T_{\rm gen} \cong
T_1/T_{\rm gen} \cong T_2. \] It thus remains to show that the
$n$-dimensional torus $T_2$ is rational over $K$. Since every torus
of dimension $\leqslant 2$ is rational, we may assume without loss
of generality that $n \geqslant 3$. We have thus reduced
Proposition~\ref{prop2.type-c} to the following lemma.

\begin{lem} \label{lem.elementary}
Let $n \geqslant 3$,  let $W = W({\sf C}_n) = (\bbZ/2 \bbZ)^n
\rtimes S_n$, let $P$ be a permutation $W$-lattice of rank $2n$ and
let
\[ 0 \to M \to P\stackrel{\varphi}{\to}  P({\sf C}_n) \to 0 \]
be an exact sequence of $W$-lattices. Then there exists a
$W$-isomorphism  between this sequence and sequence {\rm
\eqref{C_{n}sequence}}. In particular, $M$ is a permutation lattice.
\end{lem}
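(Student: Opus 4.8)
The plan is to use the fact that, up to the evident identification, $P({\sf C}_n)$ is nothing but the standard \emph{signed permutation} lattice. Recall (Bourbaki) that $P({\sf C}_n) = \bigoplus_{i=1}^n \bbZ\varepsilon_i$ with $W = (\bbZ/2\bbZ)^n \rtimes S_n$ acting by $c_i \colon \varepsilon_i \mapsto -\varepsilon_i$ (fixing $\varepsilon_j$ for $j \ne i$) and $S_n$ permuting the $\varepsilon_i$. The crucial preliminary step is to classify the $W$-orbits of $\bbZ^n$ by cardinality. For $v \in \bbZ^n$ with exactly $r$ nonzero coordinates, the subgroup $(\bbZ/2\bbZ)^n$ already moves $v$ into $2^r$ distinct vectors of the same support, and $S_n$ carries this support onto each of the $\binom{n}{r}$ subsets of size $r$; hence the $W$-orbit of $v$ has cardinality at least $2^r\binom{n}{r}$. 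For $r = 1$ this orbit is exactly $\{\pm m\varepsilon_i : 1 \le i \le n\}$ (with $m = |v_i|$), of cardinality $2n$, whereas for $r \ge 2$ one has $2^r\binom{n}{r} > 2n$ as soon as $n \ge 3$, the minimum over $r \in \{2,\dots,n\}$ being attained at $r = 2$ or $r = n$ and equal to $2n(n-1)$ or $2^n$, both exceeding $2n$. I expect this cardinality bound, and with it the role of the hypothesis $n \ge 3$, to be the only genuine point of the proof: for $n = 2$ the orbit of $\varepsilon_1 + \varepsilon_2$ also has $2n = 4$ elements and the statement fails.

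Granting this, write $P = \bbZ[\Omega]$, where $\Omega$ is the given permutation basis, a $W$-set with $|\Omega| = 2n$. Since $\varphi$ is surjective it is nonzero, so $\varphi(\omega) \ne 0$ for some $\omega \in \Omega$. The orbit $W\!\cdot\omega \subseteq \Omega$ is mapped by $\varphi$ onto the single nonzero $W$-orbit $W\!\cdot\varphi(\omega)$ of $P({\sf C}_n)$, whose cardinality is at least $2n$ by the previous step; since $|W\!\cdot\omega| \le |\Omega| = 2n$ and the image is a quotient $W$-set of $W\!\cdot\omega$, all these cardinalities equal $2n$. Thus $\Omega = W\!\cdot\omega$ is a single orbit and $\varphi$ restricts to a $W$-equivariant bijection of $\Omega$ onto an orbit of cardinality $2n$, necessarily of the form $\{\pm m\varepsilon_i\}$ for some $m \ge 1$. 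Finally, surjectivity of $\varphi$ forces $\varphi(\Omega)$ to generate $\bbZ^n$, which rules out $m \ge 2$; hence $\varphi$ induces a $W$-equivariant bijection $\Omega \xrightarrow{\cong} \{\pm\varepsilon_i : 1 \le i \le n\}$.

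It remains to match this with sequence \eqref{C_{n}sequence}. There the middle term $\bigoplus_{i=1}^n(\bbZ\alpha_i \oplus \bbZ\beta_i)$ is the permutation lattice on $\{\alpha_i,\beta_i\}$, with $S_n$ permuting the pairs and $c_i$ interchanging $\alpha_i,\beta_i$, and the map $\varphi_0$ to $P({\sf C}_n)$ sends $\alpha_i \mapsto \varepsilon_i$, $\beta_i \mapsto -\varepsilon_i$; in particular $\varphi_0$ restricts to a $W$-equivariant bijection of the basis $\{\alpha_i,\beta_i\}$ onto $\{\pm\varepsilon_i\}$. Composing this with the inverse of the bijection $\Omega \xrightarrow{\cong} \{\pm\varepsilon_i\}$ from the previous paragraph gives a $W$-equivariant bijection $\{\alpha_i,\beta_i\} \to \Omega$, which I would extend $\bbZ$-linearly to a $W$-isomorphism $\psi \colon \bigoplus_i(\bbZ\alpha_i\oplus\bbZ\beta_i) \xrightarrow{\cong} P$ satisfying $\varphi\circ\psi = \varphi_0$. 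Being the identity on $P({\sf C}_n)$, the map $\psi$ restricts to an isomorphism $\bigoplus_i \bbZ\gamma_i \xrightarrow{\cong} M$ on kernels. This produces the desired $W$-isomorphism between the given sequence and \eqref{C_{n}sequence}, and in particular exhibits $M$ as a permutation lattice.
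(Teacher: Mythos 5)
Your proof is correct, but it follows a genuinely different route from the paper's. The paper restricts the action to the normal subgroup $A = (\bbZ/2\bbZ)^n$ and computes the character decomposition of $P \otimes \bbQ$ as an $A$-module in two ways: once from the rational splitting of the exact sequence (which forces the characters $\chi_1, \dots, \chi_n$ to occur), and once from the permutation structure $P \cong \bbZ[A/A_1] \oplus \dots \oplus \bbZ[A/A_r]$ (where each $\bbQ[A/A_i]$ contributes the subgroup $\Lambda_i$ of characters vanishing on $A_i$); the hypothesis $n \geqslant 3$ enters through the absence of $\chi_l + \chi_m$ from the list of available characters, which forces $r = n$, $\Lambda_i = \{0, \chi_i\}$ and $A_i = \Ker(\chi_i)$, after which the basis is identified pairwise exactly as in your final step. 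You avoid rationalization and the $A$-module bookkeeping altogether, working directly with the $W$-set structure: your key observation is that a nonzero $W$-orbit in $P({\sf C}_n)$ has at least $2^r\binom{n}{r}$ elements, so for $n \geqslant 3$ the minimum size $2n$ is attained only by $\{\pm m\varepsilon_i\}$, and the cardinality squeeze $2n \leqslant |W\!\cdot\varphi(\omega)| \leqslant |W\!\cdot\omega| \leqslant |\Omega| = 2n$ forces $\Omega$ to be a single orbit mapped bijectively, with $m = 1$ by surjectivity. Your argument is shorter and more elementary; the paper's yields more structural information (the explicit $A$-isotypic data) and serves as a template that could be adapted to other root systems. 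One caveat on your aside about $n = 2$: what fails there is only your orbit-size dichotomy (the orbit $\{\pm\varepsilon_1 \pm \varepsilon_2\}$ also has $2n = 4$ elements), not the lemma itself --- that orbit generates only an index-two sublattice of $P({\sf C}_2)$, so surjectivity still excludes it, and a short case check shows the conclusion of the lemma actually persists at $n = 2$; since the lemma assumes $n \geqslant 3$, this inaccuracy is harmless, but you should not assert that the statement fails there.
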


Recall that $P({\sf C}_n)$ has a basis $h_1, \dots, h_n$ such that
$c = (c_1, \dots, c_n) \in (\bbZ/2 \bbZ)^n$ acts on $P({\sf C}_n)$
by $h_i \mapsto (-1)^{c_i} h_i$ and $S_n$ permutes $h_1, \dots, h_n$
in the natural way.

\begin{proof}[Proof of Lemma~{\rm\ref{lem.elementary}}]
Denote the normal subgroup $(\bbZ/2 \bbZ)^n$ of $W$ by $A$. We shall
identify the dual group $A^*=\Hom(A,\bbZ/2\bbZ) $ with $(\bbZ/2
\bbZ)^n$ in the usual way. That is, $(b_1, \dots, b_n) \in (\bbZ/2
\bbZ)^n$ will denote the   additive character $\chi \colon A \to
\bbZ/2 \bbZ$ taking $(a_1, \dots, a_n) \in A$ to $b_1 a_1 + \dots +
b_n a_n \in \bbZ/2 \bbZ$.

We now observe that  $P({\sf C}_n) \otimes \bbQ$ decomposes as the
direct sum of $n$ one-dimensional $A$-invariant $\bbQ$-subspaces,
upon which $A$ acts by the characters
\[ \chi_1 = (1, 0, \dots, 0) \, , \ldots  , \chi_n = (0, \dots, 0, 1) \, . \]
Since the exact sequence
\[ 0 \to M \otimes \bbQ \to P \otimes \bbQ \to P({\sf C}_n)
\otimes \bbQ \to 0 \] of $A$-modules over $\bbQ$ splits, all of
these characters will be present in the irreducible decomposition of
$P \otimes \bbQ$ (as an $A$-module over $\bbQ$). Since $P$ is a
permutation $W$-module, the trivial character will also occur in
this decomposition; denote its multiplicity by $d$, where $1
\leqslant d \leqslant n$. The set $\Lambda$ of the remaining $n - d$
characters is permuted by $S_n$. Since the orbit of a character
$(b_1, \dots, b_n) \in A^*$ has ${n \choose i}$ elements, where $i$
is the number of times $1$ occurs among $b_1, \dots, b_n \in \bbZ/ 2
\bbZ$, and ${n \choose i} \geqslant n$ for any $1 \leqslant i
\leqslant n-1$, we conclude that $(b_1, \dots, b_n)$ can be in
$\Lambda$ only if $i = n$, i.e., $b_1 = \dots = b_n = 1$. In
summary, $P \otimes \bbQ$, viewed as an $A$-module over $\bbQ$, is
the direct sum of the following characters:
\begin
{align} \label{e.characters}
&\mbox{$(0, \dots, 0)$, with multiplicity $d$}, \notag\\
&\mbox{$(1, \dots, 1)$, with multiplicity $n-d$}, \\
&\mbox{$\,\chi_1 = (1, 0, \dots, 0), \dots , \chi_n = (0, \dots, 0,
1)$, each with multiplicity $1$,}\notag
\end{align}
where $d \geqslant 1$ is an integer.

In order to gain greater insight into the $A$-action on $P$ and in
particular to determine the exact value of $d$, we shall now compute
the irreducible decomposition of $P \otimes \bbQ$ (as an $A$-module
over $\bbQ$) in a different way. Recall that $P$ is a permutation
$W$-lattice. In particular, we may write
\[ P \cong \bbZ[A/A_1] \oplus \dots \oplus \bbZ[A/A_r] \, , \]
where $\cong$ denotes an isomorphism of $A$-lattices. Now observe
that, as an $A$-module over $\bbQ$, $\bbQ[A/A_i]$ is the sum of all
those characters of $A$ which vanish on $A_i$. Denote the set of all
such characters by $\Lambda_i \subset (\bbZ/2 \bbZ)^n$. Thus the
$A$-module $P \otimes \bbQ$, which we know is the direct sum of the
$2n$ characters listed in~\eqref{e.characters}, can also be written
as the direct sum of the characters in $\Lambda_1, \dots,
\Lambda_r$. Here some of the characters may appear with multiplicity
$\geqslant 2$; note that $|\Lambda_{1}| + \dots + |\Lambda_r| = 2n$
(here $ |\Lambda_i|$ denotes the order of $\Lambda_i$).
  In other words, the $2n$ characters
listed in~\eqref{e.characters} can be partitioned into $r$ subsets
$\Lambda_1, \dots, \Lambda_r$.

Note that each $\Lambda_i$ is clearly a subgroup of $(\bbZ/2
\bbZ)^n$. On the other hand, since $\chi_l + \chi_m$ is not on our
list~\eqref{e.characters} for any $l \ne m$ (here we are using the
assumption that $n \geqslant 3$), we see that no two of the
characters $\chi_1, \dots, \chi_n$ can be contained in the same
$\Lambda_i$. This implies $r \geqslant n$. After possibly relabeling
the subgroups $A_1, \dots, A_r$  and $\Lambda_1, \dots, \Lambda_r$,
we may  assume that $\chi_i \in \Lambda_i$ for $i = 1, \dots, n$.
Since $\Lambda_i$ is a subgroup of $(\bbZ/2 \bbZ)^n$, each
$\Lambda_i$ should also contain the trivial character. This shows
that the irreducible decomposition of $P \otimes \bbQ$ (as an
$A$-module) contains at least $n$ copies of the trivial character
$(0, \dots, 0)$. We conclude that $n-d = 0$ in~\eqref{e.characters},
$r = n$, $\Lambda_i = \{ (0, \dots, 0), \chi_i \}$, and
\[ A_i = \Ker(\chi_i) =
(\bbZ/2 \bbZ) \times \dots \times (\bbZ/2 \bbZ) \times \{ 1 \}
\times (\bbZ/2 \bbZ) \times \dots \times (\bbZ/2 \bbZ), \] where $\{
1 \}$ occurs in the $i$th position.

We now return to the permutation $W$-lattice $P$. Let $e_1, f_1,
\dots, e_n, f_n$ be a $\bbZ$-basis of $P$ permuted by $W$. As we saw
above, the permutation action of $A$ on this basis is isomorphic to
that on $A/A_1 \cup \dots \cup A/A_n$. After suitably relabeling the
basis elements, we may thus assume that $e_i$ and $f_i$ are the two
elements of our basis fixed by $A_i$. Clearly $A$ permutes $\{ e_i,
f_i \}$. On the other hand, since conjugation by $S_n$ naturally
permutes the subgroups $A_1, \dots, A_n$ of $A$, it also naturally
permutes the (unordered) pairs $\{e_i, f_i \}$.

Since $A_i$ fixes $e_i$ and $f_i$, the elements $\varphi(e_i)$ and
$\varphi(f_i)$ lie in $P({\sf C}_n)^{A_i}= \bbZ h_i$. In other
words, $\varphi(e_i) = m_i h_i$ and $\varphi(f_i) = - m_i h_i$ for
some $m_1, \dots, m_n \in \bbZ$. Since $\varphi$ is surjective, $m_i
= \pm 1$ for each $i$. After interchanging $e_i$ and $f_i$ if
necessary, we may assume that $m_1 = \dots = m_n = 1$. Now $S_n$
permutes both $\{e_1, \dots, e_n \}$ and $\{ f_1, \dots, f_n \}$ in
the natural way. Identifying $e_i \in P$ with $\alpha_i$, $f_i$ with
$\beta_i$, we see that the exact sequence
\[ 0 \to M \to P\stackrel{\varphi}{\to}  P({\sf C}_n) \to 0 \]
is $W$-equivariantly isomorphic to the
sequence~\eqref{C_{n}sequence}.
\end{proof}

This completes the proof of Proposition~\ref{prop2.type-c}, hence of
Theorem~\ref{thm1} (a).

\section{Appendix: $G/T$ versus $T^0$}
\label{sect.appendix }

Let $G$ be a semisimple simply connected group defined over  a field
$K$ and let $T \subset G$ be a $K$-torus.  As we mentioned in
Remark~\ref{rem.G/T-vs-T}, there is no obvious connection between
the (stable) $K$-rationality of $G/T$ and that of $T$.

However, using the results on tori recalled in Section
\ref{sect2.2}, Theorem~\ref{prop4.4}(d) may be rephrased in the
following manner: If $G/T$ is stably $K$-rational, then the dual
torus $T^0$ is stably $K$-rational. Similar,
Proposition~\ref{prop4.4bis} can be rephrased as follows: Suppose
$G$ is a special, split $K$-group and $T \subset G$ is a $K$-torus.
If the dual torus $T^0$ is stably $K$-rational, then $G/T$ is stably
$K$-rational.

One might then wonder if, when $G$ is split (but not necessarily
special), the (stable) $K$-rationality of the dual torus $T^0$
implies that of $G/T$. This is an open question; a positive answer
would yield the stable rationality in the missing case $\sf G_{2}$
in our main Theorem~\ref{thm1} or (equivalently, in
Theorem~\ref{thm1.modified}).

One may go even further and ask whether or not $G/T$ and $T^0$ are
always stably $K$-birationally isomorphic (assuming $G$ is split).
The purpose of this appendix is to show that this stronger assertion
is false.

\begin{prop} \label{prop.counterexample}
There exist a $K$-torus $T$ and a split semisimple simply connected
group $G$ with $T \subset G$ such that $G/T$ is not stably
$K$-birationally isomorphic to the dual torus $T^0$.
\end{prop}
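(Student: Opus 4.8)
The plan is to produce explicit $G$ and $T$ and to separate $G/T$ from $T^0$ by a birational invariant \emph{finer} than the unramified Brauer group. The invariant I would use is the stable equivalence class of the Galois lattice $\Pic\overline{X}_{\rm c}$ attached to a smooth projective model $X_{\rm c}$ of $X$, taken modulo permutation summands; by the theory recalled in Section~\ref{sect2} (see \cite{descenteII}) this class is a stable $K$-birational invariant. By Theorem~\ref{prop4.4}(a), $\Pic\overline{(G/T)}_{\rm c}$ sits in an exact sequence $0 \to P \to \Pic\overline{(G/T)}_{\rm c} \to T^* \to 0$ with $P$ permutation, so for $X=G/T$ the invariant is governed by the character lattice $T^*$; for $X=T^0$ it is the flasque part of a flasque resolution of $(T^0)^* = T_{*}$, so it is governed by the \emph{dual} lattice $T_{*}$.

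First I would explain why the first-order part of this invariant cannot succeed, which is exactly what makes the statement nontrivial. By Remark~\ref{Brauergp}, $\Brnr(G/T)/\Br K \cong \Sha^1_\omega(K,T^*)$, while the corresponding computation for the torus gives $\Brnr(T^0)/\Br K \cong \Sha^2_\omega(K,T_{*})$; and for a pair of dual lattices one has the standard duality $\Sha^1_\omega(\Gamma,M) \cong \Sha^2_\omega(\Gamma,M^0)$, valid over every extension $L/K$. Hence the unramified Brauer groups of $G/T$ and of $T^0$ coincide over all fields, for every choice of $T$. To get past this I would pass to the next level and use $\Sha^2_\omega(L,\Pic\overline{X}_{\rm c})$, taken over all $L/k$. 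Since $\Sha^2_\omega$ vanishes on permutation lattices (Section~\ref{sect2}), this again depends only on the stable class of the Picard lattice and is a stable birational invariant; but the degree shift involved in passing from $T^0$ to its flasque resolution means the duality that equated the degree-$1$ pieces no longer equates the degree-$2$ ones.

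Concretely I would take $G=\Spin_8$, of type ${\sf D}_4$, and let $T$ be a maximal $K$-torus over a suitable field $K$ whose splitting group is the subgroup $H = \langle c_3 c_4,\, c_1 c_2 (34)\rangle \cong (\bbZ/2\bbZ)^2$ of $W({\sf D}_4)$ from the proof of Proposition~\ref{prop5.3}, acting on $T^* = P({\sf D}_4)$ as there; any such $H$ with the prescribed action is realizable as a Galois group. For $G/T$ one has $\Sha^1_\omega(H,P({\sf D}_4)) \neq 0$ by Proposition~\ref{prop5.3}, so $G/T$ is not stably rational; and the decomposition $T_{*} = Q({\sf D}_4) \cong \bbZ \oplus J_H$ of Section~\ref{sect.non-rationality}, with $J_H$ the norm cokernel of Claim~\ref{claim1}, shows likewise that $T^0$ is not stably rational. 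The computation then reduces to evaluating $\Sha^2_\omega(L,\Pic\overline{(G/T)}_{\rm c})$, controlled by $\Sha^2_\omega(L,P({\sf D}_4))$ via the sequence of Theorem~\ref{prop4.4}(a), against $\Sha^2_\omega(L,\Pic\overline{(T^0)}_{\rm c})$, which a flasque resolution of $\bbZ \oplus J_H$ pushes into degree $3$ of $Q({\sf D}_4)$; I expect these to disagree for a suitable subgroup of $H$, and exhibiting one such $L$ shows that the two Picard lattices are not stably equivalent, hence that $G/T$ and $T^0$ are not stably $K$-birationally isomorphic.

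The hard part will be this last bookkeeping: both varieties have the same nonzero Brauer group $\bbZ/2\bbZ$, so the argument must manufacture a genuine discrepancy one cohomological degree higher, where $\Sha_\omega$ is no longer literally a kernel in a long exact sequence and the resolutions must be spliced carefully. I would handle it by the same explicit $(\bbZ/2\bbZ)^2$-calculation used for $\Sha^1_\omega(H,I_H) \cong \bbZ/2\bbZ$, computing the cohomology of $H$ together with the restriction maps to its three cyclic subgroups.
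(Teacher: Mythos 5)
You have identified the right kind of invariant but disqualified it on the basis of a false claim, and this vitiates the whole plan. The ``standard duality'' $\Sha^1_{\omega}(\Gamma,M)\cong\Sha^2_{\omega}(\Gamma,M^0)$ that you invoke to conclude that the unramified Brauer groups of $G/T$ and of $T^0$ coincide for \emph{every} $T$ does not exist, and the paper's own proof of this proposition is a counterexample to it. Take $\Gamma=(\bbZ/p\bbZ)^r$ and $M=I_{\Gamma}$ the augmentation ideal, so that $M^0=J_{\Gamma}$. The sequence $0\to I_{\Gamma}\to\bbZ[\Gamma]\to\bbZ\to 0$ gives $\Sha^1_{\omega}(\Gamma,I_{\Gamma})=\Ker\bigl[\bbZ/p^r\to\prod_{g}\bbZ/p\bigr]=p\bbZ/p^r\bbZ$, cyclic of order $p^{r-1}$, while $0\to\bbZ\to\bbZ[\Gamma]\to J_{\Gamma}\to 0$ gives $\Sha^2_{\omega}(\Gamma,J_{\Gamma})=\Sha^3_{\omega}(\Gamma,\bbZ)=H^3(\Gamma,\bbZ)$, which is killed by $p$ by the K\"unneth formula. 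For $r\geqslant 3$ these groups have different exponents. This is exactly how the paper proves the proposition: it takes $T=R^1_{L/K}\bbG_{m}$, the norm-one torus of a Galois extension $L/K$ with group $\Gamma=(\bbZ/p\bbZ)^r$, $r\geqslant 3$, realized as a maximal torus of the split group $G=\SL_{p^r}$; then $\Brnr(G/T)/\Br K\cong\Sha^1_{\omega}(\Gamma,T^*)$ contains an element of order $p^2$, whereas $\Brnr(T^0)/\Br K\cong\Sha^2_{\omega}(\Gamma,(T^0)^*)$ is killed by $p$, so $G/T$ and $T^0$ are not stably $K$-birationally isomorphic. No invariant finer than the Brauer group is needed.

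Because of the false premise, your concrete construction cannot close the argument. With $G=\Spin_8$ and splitting group $H\cong(\bbZ/2\bbZ)^2$, both unramified Brauer groups really are $\bbZ/2\bbZ$, so you are forced one cohomological degree higher, and there the proof is only promised: you never compute $\Sha^2_{\omega}(L,\Pic\overline{X}_{\rm c})$ for either side, writing only that you ``expect these to disagree.'' Moreover, the reduction of $\Sha^2_{\omega}$ of $\Pic\overline{(G/T)}_{\rm c}$ to $\Sha^2_{\omega}(T^*)$ via the sequence $0\to P\to\Pic\overline{X}_{\rm c}\to T^*\to 0$ of Theorem~\ref{prop4.4}(a) is not automatic: only $\Sha^2_{\omega}$, not $H^2$ or $H^3$, vanishes on a permutation lattice over all subgroups, and indeed $\Sha^3_{\omega}(\Gamma',\bbZ)$ can be nonzero (the paper uses precisely this), so the boundary obstruction in $H^3(\Gamma',P)$ need not die locally and the induced map on $\Sha^2_{\omega}$ is a priori neither injective nor surjective. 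The repair is to discard the duality claim and test the Brauer invariant itself on norm-one tori: already there, with $\Gamma=(\bbZ/p\bbZ)^r$ and $r\geqslant 3$, the two sides are distinguished by exponent.
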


\begin{proof}
Let $K$ be a field of characteristic zero and let
 $L/K$ be a finite Galois extension of fields with group $\Gamma$.
The augmentation map $\bbZ[\Gamma] \to \bbZ $ gives rise to the
exact sequence of $\Gamma$-lattices
$$0 \to I_{\Gamma} \to \bbZ[\Gamma] \to \bbZ \to 0.$$
The norm map $1 \to N_{\Gamma}=\sum_{g \in \Gamma}g \in \bbZ[\Gamma]
$ gives rise to the exact sequence of $\Gamma$-lattices
 $$0 \to \bbZ \to \bbZ[\Gamma] \to J_{\Gamma} \to 0$$
 which is dual to the previous sequence.

Let $T/K$ be the torus with character group $T^*=I_{\Gamma}$. The
character group of $T^0$ is then $(T^0)^* =J_{\Gamma}$.

The  torus $T$ is the $K$-torus $R^1_{L/K}{\bbG}_{m}$  of norm 1
elements in $L$. For $d=[L:K]$  this torus is a maximal torus in
$G={\SL}_{d}$.

 The unramified Brauer group of
the $K$-torus $T^0$  (modulo $\Br K$) is
$\Sha^2_{\omega}(\Gamma,(T^0)^*)$; see~\cite{CTSb}. We have
$$\Sha^2_{\omega}(\Gamma, (T^0)^*)= \Sha^3(\Gamma, \bbZ) =
H^3(\Gamma,\bbZ)$$ (recall that the cohomology of a cyclic group has
period 2 and that $H^1(H, \bbZ)=0$ for any finite group $H$.)

The unramified Brauer group of $G/T$ (modulo $\Br K$)  is
 $\Sha^1_{\omega}(\Gamma, T^*)$; see~\cite{col-k}.
 We have
$$\Sha^1_{\omega}(\Gamma, T^*)= \Ker \Bigl[\hat{H}^0(\Gamma,\bbZ) \to
\prod_{g \in \Gamma} \hat{H}^0(g,\bbZ)\Bigr].$$ This group is
$$\Ker  \Bigl[\bbZ/n_{\Gamma}  \to \prod_{g \in \Gamma} \bbZ/n_{g}\Bigr]$$
where the projection is the natural map, $n_{\Gamma}$ is the order
of $\Gamma$ and $n_{g}$ the exponent of $g \in \Gamma$.

Let us now take $\Gamma=(\bbZ/p)^r$, $r \geqslant 2$. The K\"unneth
formula shows that $ H^3(\Gamma,\bbZ)$ is killed by $p$;
see~\cite[p.\,247]{spanier}. Thus  the group
$\Sha^2_{\omega}(\Gamma, (T^0)^*)$  is killed by $p$.

The group $$\Ker  \Bigl[\bbZ/n_{\Gamma}  \to \prod_{g \in \Gamma}
\bbZ/n_{g}\Bigr]$$ is $$ p \bbZ/p^{r} \bbZ \subset \bbZ/p^r\bbZ \, .
$$ Thus for any $r \geqslant 3$, the group $\Sha^1_{\omega}(\Gamma,
T^*)$ is not killed by $p$.

We conclude that the unramified Brauer group of $G/T$ (modulo $\Br
K$) is not isomorphic to the unramified Brauer group of $T^0$
(modulo $\Br K$). Therefore $G/T$ and $T^0$ are not stably
$K$-birationally isomorphic, as claimed.

\end{proof}

\section*{Acknowledgements}

The authors are grateful to the referee for contributing
Proposition~\ref{prop.versal0} and for other helpful suggestions.

Colliot-Th\'el\`ene is a researcher at  C.N.R.S. (France).
Kunyavski\u\i \ was supported in part by the Minerva Foundation
through the Emmy Noether Research Institute of Mathematics. Popov
was supported in part by Russian grants {\rus RFFI} 08--01--00095,
{\rus N{SH}}--1987.2008.1, a (granting) program of the Mathematics
Branch of the Russian Academy of Sciences, by ETH Z\"urich, and TU
M\"unchen. He also thanks the Department of Mathematics of UBC for
its hospitality. Reichstein was supported in part by an NSERC
Discovery grant and Accelerator supplement. Colliot-Th\'el\`ene,
Kunyavski\u\i \ and Reichstein thank the organizers of the
Oberwolfach meeting ``Quadratic Forms and Linear Algebraic Groups''
(June 2006). Colliot-Th\'el\`ene and Reichstein also acknowledge the
support of Emory University.

\providecommand{\bysame}{\leavevmode\hbox
to3em{\hrulefill}\thinspace}

\end{document}